   \renewcommand*{\backrefalt}[4]{%
     \ifcase #1 %
     \or
     \vspace{0pt} \footnotesize (In \S#2)%
     \else
     \vspace{0pt} \footnotesize (In \S#2)
     \fi}
\renewcommand*{\backref}[1]{}
\newcommand\junk[1]{}
  \newcommand\cyr{%
    \renewcommand\rmdefault{wncyr}%
    \renewcommand\sfdefault{wncyss}%
    \renewcommand\encodingdefault{OT2}%
    \normalfont
    \selectfont}
  \DeclareTextFontCommand{\textcyr}{\cyr}
\newcommand\notdivides{{\!\not\divides\,}}
\newcommand\divides{{\mid}}
\newcommand\lie[1]{{\mathfrak #1}}
\newcommand\Tr{{\rm Tr\,}}
\newcommand\Proj{{\rm Proj\,}}
\newcommand\iso{{\,\cong\,}}
\newcommand\tensor{{\otimes}}
\newcommand\calO{{\mathcal O}}
\newtheorem{Theorem}{Theorem} 
\newtheorem{Proposition}{Proposition} 
\newtheorem{Lemma}{Lemma}
\newtheorem{Corollary}{Corollary}
\newtheorem*{Corollary*}{Corollary}
\newtheorem*{Lemma*}{Lemma}
\newtheorem*{Theorem*}{Theorem}
\theoremstyle{remark}
\newtheorem{Example}{Example}
\newcommand\onto{\mathop{\twoheadrightarrow}}
\newcommand\into{\operatorname*{\hookrightarrow}}
\newcommand\union{\cup}
\newcommand\Union{\bigcup}
\newcommand\Pone{{\mathbb P}^1}
\newcommand\PP{{\mathbb P}}
\newcommand\integers{{\mathbb Z}}
\newcommand\rationals{{\mathbb Q}}
\newcommand\naturals{{\mathbb N}}
\newcommand\Fun{{\rm Fun}}
\newcommand\nulset{\emptyset}
\theoremstyle{plain}
\theoremstyle{remark}
\renewenvironment{quotation}
{\list{}{
    \setlength\itemindent{0em}%
    \setlength\leftmargin{1.5em}
    \setlength\rightmargin{1.5em}
  }%
\item[]}
{\endlist}
\newcommand\defn[1]{{\bf #1}} 
\newcommand\Spec{{\rm Spec}\,}
\newcommand\Gm{{{\mathbb G}_m}}
\newcommand\Ga{{{\mathbb G}_a}}
\newcommand\QQ{\rationals}
\newcommand\FF{{\mathbb F}}
\newcommand\barX{{\overline{X}}}
\newcommand\lt{{\rm init\space}}
\begin{document}
\pagestyle{plain}

\title{Frobenius splitting, point-counting, and degeneration}

\author{Allen Knutson}
\thanks{AK was supported by NSF grant 0303523.}
\email{allenk@math.cornell.edu}
\date{\today}

\maketitle

\renewcommand\AA{{\mathbb A}}

\begin{abstract}
  Let $f$ be a polynomial of degree $n$ in $\integers[x_1,\ldots,x_n]$, 
  typically reducible but squarefree. From the hypersurface $\{f=0\}$ 
  one may construct a number of other subschemes $\{Y\}$ by extracting
  prime components, taking intersections, taking unions, and iterating
  this procedure.

  We prove that if the number of solutions to $f=0$ in $\FF_p^n$
  is not a multiple of $p$, then
  all these intersections in $\AA^n_{\FF_p}$ just described are reduced.
  (If this holds for infinitely many $p$, then it holds over $\QQ$ as well.) 
  More specifically, there is a {\em Frobenius splitting} on
  $\AA^n_{\FF_p}$ compatibly splitting all these subschemes $\{Y\}$.

  We determine when a Gr\"obner degeneration $f_0=0$ of such a hypersurface
  $f=0$ is again such a hypersurface. Under this condition, we prove
  that compatibly split subschemes degenerate to compatibly split subschemes,
  and in particular, stay reduced.

  Together these suggest that the number of $\FF_p$-points
  on the general fiber $Y$ and special fiber $Y'$ of a Gr\"obner degeneration
  should, in good cases, differ by a multiple of $p$.
  Under very special Gr\"obner degenerations 
  (``geometric vertex decompositions''),
  we give a discontinuous injection of $Y$ into $Y'$ that lets us
  compare the relate their classes in the Grothendieck group of varieties,
  and thereby demonstrate this.

  Our results are strongest in the case that $f$'s lexicographically first term 
  is $\prod_{i=1}^n x_i$. Then for all large $p$, 
  there is a Frobenius splitting that compatibly splits $f$'s hypersurface
  and all the associated $\{Y\}$.
  The Gr\"obner degeneration $Y'$ of each such $Y$ 
  is a reduced union of coordinate spaces (a Stanley-Reisner scheme),
  and we give a result to help compute its Gr\"obner basis.
  We exhibit an $f$ whose associated $\{Y\}$ include Fulton's matrix 
  Schubert varieties, and recover much more easily 
  the Gr\"obner basis theorem of [Knutson-Miller '05].
  We show that in Bott-Samelson
  coordinates on an opposite Bruhat cell $X^v_\circ$ in $G/B$, the $f$ defining
  the complement of the big cell also has initial term $\prod_{i=1}^n x_i$,
  and hence the Kazhdan-Lusztig subvarieties $\{X^v_{w\circ}\}$
  degenerate to Stanley-Reisner schemes. This recovers, in a weak
  form, the main result of [Knutson '08].
\end{abstract}

{\Small
  \setcounter{tocdepth}{1}
  \tableofcontents
}

\section{Introduction, and statement of results}

A commutative ring $R$ is \defn{reduced} if it has no nilpotents, i.e.,
if for $m>1$ the map $r \mapsto r^m$ takes only $0$ to $0$. It is tempting
to write this as $\ker (r\mapsto r^m) = 0$, and one may indeed do so
if $m$ is a prime $p$ and $R$ contains the field $\FF_p$ of $p$ elements.
Then the \defn{Frobenius map} $r\mapsto r^p$ is $\FF_p$-linear, and
the condition of $R$ being reduced says that this map has a one-sided inverse.
This, for us, motivates the study of these inverses.

Define a \defn{(Frobenius) splitting} \cite{BK} of a commutative
$\FF_p$-algebra $R$ as a map $\varphi: R\to R$ satisfying three conditions:
\begin{itemize}
\item $\varphi(a+b) = \varphi(a) + \varphi(b)$ 
\item $\varphi(a^p b) = a\, \varphi(b)$
\item $\varphi(1)=1$.
\end{itemize}
If $\varphi$ only satisfies the first two conditions (e.g. $\varphi \equiv 0$), 
we will call it a \defn{near-splitting}\footnote{%
  One might object that $0$ is not so near to being a splitting.
  Such maps have also been called ``$p^{-1}$-linear'' and probably
  many other things.}. In section \ref{sec:nearsplittings}
we will recall from \cite[section 1.3.1]{BK} the classification
of near-splittings of affine space. 

If $R$ is equipped with a splitting $\varphi$, we will say $R$ is \defn{split}
(not just ``splittable''; we care about the choice of $\varphi$).
Call an ideal $I\leq R$ of a ring with a Frobenius (near-)splitting $\varphi$ 
\defn{compatibly (near-)split} if $\varphi(I) \subseteq I$. 
For the convenience of the reader we recapitulate the basic results
of Frobenius splitting we will use:

\begin{Theorem*}\cite[section 1.2]{BK}
  Let $R$ be a Frobenius split ring with ideals $I,J$.
  \begin{enumerate}
  \item $R$ is reduced.
  \item If $I$ is compatibly split, then $I$ is radical, 
    and $\varphi(I)=I$.
  \item If $I$ and $J$ are compatibly split ideals, then so are
    $I\cap J$ and $I + J$. Hence they are radical.
  \item If $I$ is compatibly split, and $J$ is arbitrary, 
    then $I:J$ is compatibly split. In particular the prime
    components of $I$ are compatibly split.
  \end{enumerate}
\end{Theorem*}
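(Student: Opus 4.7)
The plan is to bootstrap everything from the single identity $r = \varphi(r^p)$, obtained by setting $a=r$, $b=1$ in the axiom $\varphi(a^p b) = a\varphi(b)$ and using $\varphi(1)=1$. This identity exhibits $\varphi$ as an explicit left inverse to the Frobenius, so (1) follows immediately: $r^p=0$ forces $r = \varphi(0) = 0$, and a general $r^m = 0$ is handled by iteration, passing up to $r^{p^k}$ with $p^k \ge m$ and then walking back down via successive applications of $\varphi$, using $\varphi(0)=0$ at each step.

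Part (2) uses the same identity twice. For radicality, if $r^m \in I$ then $r^{p^k} \in I$ for some $k$, and repeatedly applying $\varphi(r^{p^j}) = r^{p^{j-1}}$ keeps the result inside $I$ (since $\varphi(I) \subseteq I$) and terminates at $r \in I$. For $\varphi(I)=I$, the non-trivial inclusion is $I \subseteq \varphi(I)$, which is just $r = \varphi(r^p)$ with $r^p \in I$. Part (3) is then formal bookkeeping: $\varphi(I+J) \subseteq \varphi(I) + \varphi(J) \subseteq I+J$ by additivity, while $x \in I \cap J$ gives $\varphi(x) \in \varphi(I) \cap \varphi(J) \subseteq I \cap J$; radicality of both then follows from (2).

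The only step that genuinely exercises the full axiom $\varphi(a^p b) = a\varphi(b)$ is (4). For $r \in I:J$ and $j \in J$, the key computation
\[
\varphi(r)\cdot j \;=\; \varphi(j^p\cdot r) \;=\; \varphi(j^{p-1}\cdot (jr))
\]
lands in $\varphi(I)\subseteq I$ because $jr \in I$, so $\varphi(r) \in I:J$. For the prime components, (2) has already made $I$ radical; in the Noetherian setting one writes $I = \bigcap_i P_i$ and realizes each minimal prime as a colon $P_i = I : \prod_{j\ne i} P_j$ (a standard primality-plus-minimality argument shows the product is not contained in $P_i$), so each $P_i$ is compatibly split by the colon result. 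The only real obstacle is spotting the key move $\varphi(r)\cdot j = \varphi(j^p r)$ that pushes a factor of $j$ past the Frobenius; once one has it, everything else is assembly.
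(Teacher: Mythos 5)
Your proof is correct and follows essentially the same route as the paper: everything is bootstrapped from $\varphi(r^p)=r$, parts (1)–(3) are the same small computations, and (4) hinges on the identical key move $\varphi(r)\cdot j = \varphi(j^p r)$. The only presentational differences are that the paper proves (1) by picking the maximal $m$ with $r^m\neq 0$ and killing $s=r^m$ in one step rather than iterating $\varphi$, and proves radicality in (2) by passing to the split ring $R/I$ rather than iterating inside $I$; and your write-up supplies the standard colon-ideal argument $P_i = I : \prod_{j\neq i}P_j$ for the prime-component claim, which the paper asserts without proof.
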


Note that the sum of radical ideals is frequently {\em not} radical;
``compatibly split'' is a much more robust notion.

\begin{proof}
  \begin{enumerate}
  \item Assume not, and let $r$ be a nonzero nilpotent with $m$
    chosen largest such that $r^m \neq 0$ but $r^{m+1}=0$. 
    Let $s=r^m$. Then $0=s^p$, so $0=\varphi(s^p)=s$, contradiction.
  \item If $I$ is compatibly split, then $\varphi$ descends to a
    splitting of $R/I$, so $R/I$ is reduced. Equivalently, $I$ is radical.
    Since $I$ contains $\{i^p : i\in I\}$, 
    one always has $\varphi(I) \supseteq I$.
  \item 
    $\varphi(I\cap J) \subseteq \varphi(I)\cap \varphi(J) \subseteq I\cap J$. 
    $\varphi(I+J)\subseteq \varphi(I)+\varphi(J)$ because $\varphi$ is additive.
  \item 
$      r \in I:J \iff \forall j\in J, rj \in I 
      \implies \forall j\in J, rj^p \in I 
      \implies \forall j\in J, \varphi(rj^p) \in I$
      (since $I$ is compatibly split)
      $\iff \forall j\in J, \varphi(r)j \in I 
      \iff  \varphi(r) \in I:J. $
  \end{enumerate}
\end{proof}

If $\varphi$ is a near-splitting such that $\varphi(1)$ is not a zero divisor, 
then parts 1,3,4 of this theorem still hold.
Unlike splitting and near-splitting, this notion does not always
pass to $R/I$; the induced near-splitting $\varphi'$ 
on $R/I$ may have $\varphi'(1)$ being a zero divisor,
in which case part 2 of the theorem can fail.

\begin{Corollary}\label{cor:allradical}
  Let $I$ be a compatibly split ideal in a Frobenius split ring.
  From it we can construct many more ideals, by taking prime components,
  sums, and intersections, then iterating. All of these will be radical.
\end{Corollary}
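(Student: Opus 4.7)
The plan is to observe that the corollary is an immediate consequence of the theorem that precedes it, assembled by a straightforward induction on the number of construction steps.

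First I would set up the induction. Let $\mathcal{S}$ be the collection of ideals obtainable from $I$ by iterating the three operations (prime components, sums, intersections). I want to show by induction on the depth of construction that every $J \in \mathcal{S}$ is compatibly split, whence $J$ is radical by part (2) of the theorem.

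The base case is $J = I$ itself, which is compatibly split by hypothesis. For the inductive step, suppose $J_1, J_2 \in \mathcal{S}$ are compatibly split. Then $J_1 \cap J_2$ and $J_1 + J_2$ are compatibly split by part (3) of the theorem. For prime components, I would invoke part (4): each associated prime (in particular, each minimal prime) of a compatibly split $J$ can be realized as $J : f$ for some $f$, and part (4) asserts that any such colon ideal is compatibly split. Since every ideal in $\mathcal{S}$ is constructed by finitely many applications of these operations, induction gives that $\mathcal{S}$ consists entirely of compatibly split ideals.

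There is no real obstacle here; the only point requiring care is the claim that prime components are compatibly split, which uses the ``in particular'' clause of part (4). One should note that parts (3) and (4) only produce compatibly split ideals from compatibly split ones (not merely from radical ones), which is why the notion of compatible splitting is the correct inductive invariant, rather than radicality itself — this is precisely the point the paper emphasizes after the theorem, namely that sums of radical ideals need not be radical. Applying part (2) at the end converts compatible splitness into radicality, completing the proof.
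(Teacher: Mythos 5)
Your proof is correct and is exactly the intended argument; the paper itself gives no separate proof of this corollary, treating it as immediate from parts (2), (3), and (4) of the preceding theorem, and your induction with compatibility-of-splitting as the invariant is precisely how one assembles it.
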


It was recently observed \cite{Schwede,KumarMehta}, and 
only a little harder to prove (a few pages, rather than a few lines),
that a Noetherian split ring $R$ has only finitely many compatibly
split ideals. In very special cases the algorithm suggested in
corollary \ref{cor:allradical} finds all of them.

As we recall in section \ref{sec:nearsplittings},
there is a near-splitting on $\FF_p[x_1,\ldots,x_n]$ called $\Tr(\bullet)$ 
uniquely characterized by its application to monomials $m$:
$$ \Tr(m) =   \begin{cases}
  \sqrt[p]{m \prod_i x_i} \bigg/ \prod_i x_i 
  & \text{if $m \prod_i x_i$ is a $p$th power} \\
  0 & \text{otherwise.}
\end{cases} $$
The \defn{standard splitting} of $\FF_p[x_1,\ldots,x_n]$ is 
$\varphi(g) := \Tr\left( (\prod_{i=1}^n x_i)^{p-1} g\right)$.

\begin{Lemma}\label{lem:coordspaces}
  The standard splitting 
  is a Frobenius splitting, and the ideals that it
  compatibly splits are exactly the \defn{Stanley-Reisner ideals} 
  (meaning, those generated by squarefree monomials).
\end{Lemma}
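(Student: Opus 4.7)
My plan is to verify the three splitting axioms from the near-splitting properties of $\Tr$ (established in section \ref{sec:nearsplittings}) and then characterize the compatibly split ideals by a double inclusion, both directions of which rest on an explicit formula for $\varphi$ on monomials.

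Additivity of $\varphi$ and the identity $\varphi(a^p b) = a\varphi(b)$ are inherited directly from $\Tr$. For $\varphi(1)=1$ I would observe that $(\prod_i x_i)^{p-1}\cdot \prod_i x_i = (\prod_i x_i)^p$ is a $p$th power, so $\Tr$ returns $\prod_i x_i / \prod_i x_i = 1$. Unwinding the definition of $\Tr$ then yields the rule
$$\varphi\!\left(\prod_i x_i^{a_i}\right) = \begin{cases} \prod_i x_i^{a_i/p} & \text{if } p \mid a_i \text{ for every } i, \\ 0 & \text{otherwise,} \end{cases}$$
extended by $\FF_p$-linearity.

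For the direction ``Stanley-Reisner $\Rightarrow$ compatibly split'', let $I$ be generated by squarefree monomials $\{m_j = \prod_{i \in S_j} x_i\}$. Any monomial $\mu \in I$ is divisible by some $m_j$, so the exponent of $x_i$ in $\mu$ is at least $1$ for every $i \in S_j$. If $\varphi(\mu) \neq 0$, then all those exponents are multiples of $p$, hence at least $1$ after division by $p$, so $m_j$ still divides $\varphi(\mu)$; this half follows by $\FF_p$-linearity.

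The harder direction, ``compatibly split $\Rightarrow$ Stanley-Reisner'', is the main obstacle, and I expect it to break into two subclaims. The first---that $I$ must itself be a monomial ideal---is the most delicate, because the hypothesis $\varphi(I) \subseteq I$ supplies only one operator, too blunt on its own to isolate the individual monomials in a mixed element $f = \sum c_\mu \mu \in I$. My plan is to resolve this by iteration: for any $\mu_0$ in the support of $f$, the element $\varphi^k(\mu_0^{p^k-1} f)$ lies in $I$, and by the monomial formula above a monomial $\mu$ of $f$ can contribute only if its exponent vector agrees with that of $\mu_0$ modulo $p^k$; choosing $p^k$ larger than every exponent appearing in $f$ forces $\mu = \mu_0$, so $\mu_0 \in I$. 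The second subclaim---that the monomial generators may then be chosen squarefree---is short: for any monomial $\mu \in I$ with support $S$ and $p^k$ exceeding each exponent of $\mu$, the monomial $\prod_{i \in S} x_i^{p^k}$ is an $R$-multiple of $\mu$ and so lies in $I$, and $\varphi^k$ sends it to $\prod_{i \in S} x_i$, a squarefree divisor of $\mu$.
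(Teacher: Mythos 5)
Your proof is correct, and it takes a genuinely different route from the paper's. The paper proves ``Stanley--Reisner $\Rightarrow$ compatibly split'' by first checking that $\langle\prod_i x_i\rangle$ is compatibly split and then invoking the general closure properties (prime components, intersections, unions) to reach all squarefree monomial ideals; you instead verify $\varphi(I)\subseteq I$ directly on monomial generators, using the explicit formula $\varphi(\prod_i x_i^{a_i}) = \prod_i x_i^{a_i/p}$ when $p\mid a_i$ for all $i$ (and $0$ otherwise), together with the fact that a monomial ideal contains each monomial in the support of any of its elements. For the converse, the paper leans on Lemma~\ref{lem:homog} — that $Y=\init Y$ for every weighting $\lambda$, hence $Y$ is invariant under the full torus and is a union of coordinate subspaces — and Lemma~\ref{lem:homog} itself is proved using the finiteness of compatibly split ideals, a nontrivial external input from \cite{Schwede,KumarMehta}. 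Your argument replaces this with the elementary iteration trick: multiplying an element $f\in I$ by $\mu_0^{p^k-1}$ and applying $\varphi^k$ isolates the single monomial $\mu_0$ once $p^k$ exceeds all exponents in $f$, showing $I$ is monomial; then applying $\varphi^k$ to $\prod_{i\in S}x_i^{p^k}\in I$ extracts a squarefree generator. Your converse is self-contained and elementary where the paper's appeals to heavy machinery, which is a real advantage; the paper's viewpoint buys a one-line generalization to the standard splitting of any toric variety, which it notes parenthetically, whereas your argument is specific to the monomial calculus on affine space. One small point worth making explicit in the forward direction: you should state that for a monomial ideal, every monomial in the support of any element lies in the ideal; this is what licenses the reduction to generators that your $\FF_p$-linearity sentence gestures at.
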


Occasionally we will need the near-splittings $\Tr(\bullet)$ defined
on the coordinate rings of different affine spaces at the same time;
in this case we will use subscripts to avoid confusion, e.g. 
$\Tr_H$ vs. $\Tr_{H\times L}$. 

\subsection{Point-counting over $\FF_p$ and Frobenius splitting}
Our first result relates these. 

\begin{Theorem}\label{thm:Trpointcount}
  Let $f \in \FF_p[x_1,\ldots,x_n]$ be a polynomial of degree at most $n>0$.
  Then the number of points $\vec v \in \FF_p^n$ in the affine hypersurface
  defined by $f=0$ is congruent to $(-1)^{n-1}\, \Tr(f^{p-1})$. 

  If this number is not a multiple of $p$, then some multiple of
  $\Tr(f^{p-1}\bullet)$ defines a Frobenius splitting on 
  $\FF_p[x_1,\ldots,x_n]$, with respect to which $\langle f \rangle$
  is compatibly split. (In this case $\deg f$ is indeed $n$, not less than $n$,
  by the Chevalley-Warning theorem.)

  If $f = \prod_{i=1}^m f_i$ where each $\deg f_i>0$, then the
  number of points $\vec v \in \FF_p^n$ in the subvariety defined
  by $f_1 = f_2 = \ldots = f_m = 0$
  is congruent to $(-1)^{n-m}\, \Tr(f^{p-1})$. 

  In particular, if the number of points in the $f=0$ hypersurface
  (or the $f_1 = f_2 = \ldots = f_m = 0$ subscheme, if $f$ factors)
  is not a multiple of $p$, then we can run the algorithm 
  in corollary \ref{cor:allradical} starting with the ideal $\langle f\rangle$, 
  and produce only radical ideals.
\end{Theorem}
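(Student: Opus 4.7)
The plan is to reduce the point count to a Warning-style character sum and observe that this sum is precisely $\Tr(f^{p-1})$. For the first claim, by Fermat's little theorem the map $y \mapsto 1 - y^{p-1}$ is the indicator of $\{0\} \subseteq \FF_p$, so the number of $\FF_p$-points of $\{f=0\}$ equals $\sum_{\vec v \in \FF_p^n}(1 - f(\vec v)^{p-1})$, which is congruent to $-\sum_{\vec v} f(\vec v)^{p-1} \pmod p$ since $p^n \equiv 0$. Expanding $f^{p-1}$ monomially and using the classical identity that $\sum_{v \in \FF_p} v^a$ is $-1$ when $a$ is a positive multiple of $p-1$ and $0$ otherwise, only monomials $\prod x_i^{a_i}$ with every $a_i$ a positive multiple of $p-1$ contribute. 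The degree bound $\deg f^{p-1} \leq n(p-1)$ then isolates the unique monomial $\prod x_i^{p-1}$, whose coefficient in $f^{p-1}$ is by the very definition of $\Tr$ the constant $\Tr(f^{p-1})$. So $\sum_{\vec v} f(\vec v)^{p-1} = (-1)^n \Tr(f^{p-1})$, yielding the congruence.

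For the factored version I would run the same argument starting from the indicator $\prod_{i=1}^m(1 - f_i(\vec v)^{p-1})$ and expand by inclusion-exclusion over $S \subseteq \{1,\ldots,m\}$. Each subset produces a character sum with $\prod_{i \in S} f_i^{p-1}$ in place of $f^{p-1}$; its total degree is at most $(p-1)\sum_{i \in S}\deg f_i \leq (p-1)\deg f \leq n(p-1)$, and nonvanishing forces equality throughout, hence $S = \{1,\ldots,m\}$ and $\deg f = n$. Only the full subset contributes, with total sign $(-1)^{|S|}\cdot(-1)^n = (-1)^{m+n} = (-1)^{n-m}$. (When $\deg f < n$, every term vanishes, which recovers Chevalley-Warning and justifies the parenthetical in the theorem.)

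For the splitting statement, set $\varphi(g) := \Tr(f^{p-1} g)$. Additivity and the identity $\varphi(a^p b) = a\,\varphi(b)$ follow immediately from the corresponding properties of $\Tr$. By hypothesis $\varphi(1) = \Tr(f^{p-1})$ is a nonzero element of $\FF_p$, so $\varphi(1)^{-1}\varphi$ is an honest Frobenius splitting. The computation $\varphi(fg) = \Tr(f^p g) = f\,\Tr(g) \in \langle f \rangle$ shows $\langle f \rangle$ is compatibly split, and the final assertion of the theorem follows from Corollary~\ref{cor:allradical} applied to this ideal.

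The argument is elementary once one recognizes that $\Tr$, applied to any polynomial of degree at most $n(p-1)$, extracts the coefficient of the top monomial $\prod x_i^{p-1}$, and I do not anticipate a real obstacle. The only delicate bookkeeping is in the factored version, where the degree inequality $(p-1)\sum_{i\in S}\deg f_i \leq n(p-1)$ has to be tracked sharply in order to conclude that the only surviving subset is the full one $S = \{1,\ldots,m\}$.
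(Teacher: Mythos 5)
Your proof is correct and follows essentially the same strategy as the paper's: the Fermat indicator $1-y^{p-1}$, the character sum $\sum_{v\in\FF_p}v^a$, and the degree bound isolating the coefficient of $\prod_i x_i^{p-1}$, then the same observation that $\langle f\rangle$ is compatibly split via $\Tr(f^pg)=f\Tr(g)$. The one organizational difference is in the factored case: the paper establishes the hypersurface congruence first, then applies set-theoretic inclusion-exclusion to $\bigcup_i\{f_i=0\}$ and invokes Chevalley--Warning as a black box to kill the terms from proper subsets $S$; you instead expand $\prod_i(1-f_i^{p-1})$ directly and run the degree argument uniformly over every $S$, which amounts to reproving Chevalley--Warning inline. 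Both are valid; yours is a bit more self-contained, while the paper's more cleanly separates the single-hypersurface lemma from its inclusion-exclusion corollary. Your sign bookkeeping $(-1)^{|S|}(-1)^n=(-1)^{n-m}$ agrees with the paper's $(-1)^{m-1}(-1)^{n-1}$.
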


\newcommand\init{\mathop{\rm init}\space}

If $\deg f < n$, then one can use the Chevalley-Warning theorem
to show that no multiple of $\Tr(f^{p-1}\bullet)$ defines a 
Frobenius splitting on $\FF_p[x_1,\ldots,x_n]$.
On the other hand, if the hypersurface defined by $f=0$ is smooth --
regardless of $\deg f$ --
then there is {\em some} splitting of affine space that compatibly
splits $\langle f\rangle$ \cite[proposition 1.1.6]{BK}.

While we think that theorem \ref{thm:Trpointcount} provides an
interesting link between point-counting and reducedness, 
we don't have any real examples where the point-counting is
the easiest way to demonstrate the splitting. 
Theorem \ref{thm:frobdegen} part (2) and especially theorem \ref{thm:initprod}
provide more checkable sufficient conditions.

\subsection{Frobenius splitting and degeneration} 

Given a weighting $\lambda:\{1,\ldots,n\} \to \naturals$ on our
variables $(x_i)$, we can define the \defn{leading form} $\init(f)$ of
any polynomial $f$ as the sum of the terms $c \prod_i x_i^{e_i}$ with
maximum $\sum_i \lambda_i e_i$. It has a nice interpretation in
terms of the \defn{Newton polytope} of $f$, which is defined as
the convex hull of the exponent vectors of the monomials in $f$;
the weighting $\lambda$ defines a linear functional on the space of
exponent vectors, and it is maximized on one face $F$ of $f$'s
Newton polytope. ``Take the exponent vector'' is a map from the set of
$f$'s terms to the Newton polytope, and $\init(f)$ is the sum of the
terms lying over $F$.

One can also define $\init(I)$ for any ideal $I \leq R[x_1,\ldots,x_n]$
(where $R$ is any base ring) as the $R$-span of $\{\init(f) : f\in I\}$.
We mention that if $\{f_1,\ldots,f_m\} \subseteq I$ have the property that
$\{\init(f_i)\}$ generate $\init(I)$, then $\{f_1,\ldots,f_m\}$ is
called a \defn{Gr\"obner basis} for the pair $(I,\lambda)$.
We will not use much of the theory of Gr\"obner bases,
but direct the interested reader to \cite{Sturmfels96}.

\begin{Lemma}\label{lem:Trinit}
  For any polynomial $g$ and weighting $\lambda$, 
  $\Tr(\init(g))$ is either $0$ or $\init(\Tr(g))$. 
\end{Lemma}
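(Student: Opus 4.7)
The plan is to exploit the fact that the near-splitting $\Tr$ acts term-by-term: writing any polynomial as a sum of monomials $\sum_j c_j m_j$, we have $\Tr(\sum_j c_j m_j) = \sum_j c_j \Tr(m_j)$, and each $\Tr(m_j)$ is either zero or a single monomial. Moreover $\Tr$ is injective on the set of monomials it does not annihilate, since for $m = \prod_i x_i^{e_i}$ with each $e_i \equiv -1 \pmod p$ one can recover $e_i$ from the exponent $(e_i+1)/p-1$ appearing in $\Tr(m)$.

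The key step is to compute how $\Tr$ transforms the $\lambda$-weight. For a monomial $m = \prod_i x_i^{e_i}$, set $w(m) = \sum_i \lambda_i e_i$. If $\Tr(m)\neq 0$, then
\[
w(\Tr(m)) \;=\; \sum_i \lambda_i \!\left(\frac{e_i+1}{p}-1\right) \;=\; \frac{w(m)}{p} \;+\; \frac{1-p}{p}\sum_i \lambda_i.
\]
This is a strictly increasing affine function of $w(m)$, with shift depending only on $p$ and $\lambda$. Consequently, among the finitely many monomials of $g$, the ones whose images under $\Tr$ achieve the maximum weight in $\Tr(g)$ are precisely the monomials of maximum weight in $g$ \emph{that also survive $\Tr$}; and no cancellation can occur among them, by the injectivity remark above.

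Two cases now close the argument. If every maximum-weight monomial of $g$ is killed by $\Tr$, then $\Tr(\init(g)) = 0$, giving the first possibility in the lemma. Otherwise at least one maximum-weight monomial of $g$ survives $\Tr$, so the maximum weight actually achieved in $\Tr(g)$ is attained exactly by the images of the maximum-weight monomials of $g$; collecting these, $\init(\Tr(g)) = \Tr(\init(g))$. The argument is essentially bookkeeping; the only potential pitfall is imagining that distinct monomials of equal weight might combine nontrivially under $\Tr$, but injectivity of $\Tr$ on surviving monomials rules this out.
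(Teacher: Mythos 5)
Your proof is correct, and its kernel — that for a surviving monomial $m$ the weight $w(\Tr(m)) = w(m)/p + \tfrac{1-p}{p}\sum_i\lambda_i$ is a strictly increasing affine function of $w(m)$ — is the same arithmetic that underlies the paper's proof. The difference is one of packaging: the paper introduces the perfect Puiseux ring $R = \FF_p[t^r : r\in\rationals_+]$ and the homogenization $h_\lambda : x_i \mapsto x_i t^{\lambda_i}$, proves the identity $\Tr(h_\lambda(g)) = h_\lambda(\Tr(g))\, t^{\frac{p-1}{p}\sum_i\lambda_i}$ by the same monomial-by-monomial computation you carry out, and then phrases the two-case split in Newton-polytope language (the face $F$ selected by $\lambda$ versus the lattice coset $C$ of exponent vectors $\equiv -1 \pmod p$). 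Your version avoids the auxiliary ring and is more self-contained, a genuine simplification for this lemma taken alone; the $h_\lambda$ machinery is not wasted in the paper, though, since it is reused in the proof of theorem~\ref{thm:frobdegen} part~(2) immediately afterward. One tiny correction: over a perfect field $\FF$ strictly larger than $\FF_p$ the term-by-term action is $\Tr(\sum_j c_j m_j) = \sum_j \sqrt[p]{c_j}\,\Tr(m_j)$, not $\sum_j c_j\,\Tr(m_j)$; this does not affect your argument, since $c \mapsto \sqrt[p]{c}$ is a bijection of $\FF$ and so neither creates nor destroys terms, and you only need nonvanishing of coefficients, not their exact values.
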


\begin{Lemma}\label{lem:homog}
  If $f = \init f$, then for any subvariety $Y$ compatibly split by
  $\Tr(f^{p-1}\bullet)$, we have $Y = \init Y$. In particular, if $f$
  is homogeneous, then $Y$ is the affine cone over a projective variety
  and has a well-defined degree.
\end{Lemma}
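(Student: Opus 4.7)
The plan is to prove that the defining ideal $I := I(Y)$ equals $\init I$, equivalently that $I$ is a $\lambda$-homogeneous ideal. I would proceed in two stages: (i) verify that $\init I$ is itself compatibly split by the same $\varphi = \Tr(f^{p-1}\bullet)$, and then (ii) use finiteness of compatibly split ideals together with a $\Gm$-orbit argument to deduce $I = \init I$.

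For (i), the essential algebraic fact is that $f = \init f$ makes the leading-form operator commute with multiplication by $f^{p-1}$: for any polynomial $h$, $\init(f^{p-1} h) = (\init f)^{p-1} \init h = f^{p-1} \init h$. Taking $h = m g$ for $g \in I$ and $m$ a monomial with all exponents less than $p$ (so $\init m = m$),
\[
\varphi(m \cdot \init g) \;=\; \Tr(f^{p-1} m \cdot \init g) \;=\; \Tr\bigl(\init(f^{p-1} m g)\bigr),
\]
which by Lemma \ref{lem:Trinit} equals either $0$ or $\init \Tr(f^{p-1} m g) = \init \varphi(m g)$. Since $mg \in I$ and $I$ is compatibly split, $\varphi(mg) \in I$, so $\init \varphi(mg) \in \init I$. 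Either way $\varphi(m \cdot \init g) \in \init I$. Writing an arbitrary element of $\init I$ in the $R^p$-basis $\{m_\alpha\}$ of $R$ consisting of monomials with exponents less than $p$, and using $\varphi(r^p s) = r \varphi(s)$, this upgrades to $\varphi(\init I) \subseteq \init I$.

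For (ii), the hypotheses $f = \init f$ and $\deg_\lambda f = |\lambda|$ (the latter forced by $\varphi(1) = 1$) together make $f^{p-1}$ a $\lambda$-homogeneous polynomial of degree $(p-1)|\lambda|$, which is exactly the weight under which $f^{p-1}$, viewed as a section of $\omega_{\AA^n}^{1-p}$, is $\Gm$-invariant for the $\Gm$-action $\sigma_t : x_i \mapsto t^{\lambda_i} x_i$. I would use this equivariance to show that each transported ideal $\sigma_t(I)$ is again compatibly split by $\varphi$, so the orbit $\{\sigma_t(I) : t \in \Gm\}$ is a family of compatibly split ideals. Combined with the Schwede/Kumar--Mehta finiteness result noted just after Corollary \ref{cor:allradical} and the connectedness of $\Gm$, the orbit must be a single point, so $I$ is $\Gm$-invariant, i.e., $\lambda$-homogeneous, giving $I = \init I$ and $Y = \init Y$. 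The ``in particular'' clause follows by specializing to $\lambda = (1,\ldots,1)$.

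The hard step is verifying that $\sigma_t(I)$ is compatibly split by $\varphi$ itself rather than by the conjugated splitting $\sigma_t \varphi \sigma_t^{-1}$: a direct check shows these maps differ on a monomial of $\lambda$-degree $e$ by the scalar $t^{e(p-1)/p}$, so compatibility is not immediate from the naive $R$-automorphism $\sigma_t$. The cleanest remedies are either to invoke the Brion--Kumar framework for splittings coming from $G$-invariant sections of $\omega^{1-p}$, or to extend $\varphi$ to the splitting $\tilde\varphi := \Tr_{R[t]}(f^{p-1} t^{p-1} \bullet)$ on the Rees algebra $R[t]$ encoding the Gr\"obner family, verify by the same $\init$-trick as in (i) that the degeneration ideal is compatibly split by $\tilde\varphi$, and deduce $I = \init I$ by comparing the general and special fibers using flatness over $\FF_p[t]$.
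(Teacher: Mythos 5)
Your step (ii) is the paper's proof: given the Schwede/Kumar--Mehta finiteness and the fact that $\Gm = \{\sigma_t\}$ carries compatibly split ideals to compatibly split ideals, connectedness of $\Gm$ forces each compatibly split ideal to be fixed, i.e.\ $\lambda$-homogeneous, giving $Y = \init Y$. Step (i) (that $\init I$ is compatibly split) is not needed for this conclusion; it is the content of part (2) of Theorem~\ref{thm:frobdegen}, proved separately.

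The ``hard step'' you flag, however, is not an obstruction, and the claimed scalar discrepancy $t^{e(p-1)/p}$ is a miscomputation. Once one imposes $\deg_\lambda f = \sum_i \lambda_i =: |\lambda|$ (which, as you correctly note, is forced by $f = \init f$ together with $\varphi(1) = 1$), one has $\sigma_t \varphi \sigma_t^{-1} = \varphi$ exactly. Concretely, for a monomial $m$ of $\lambda$-degree $d$: on one hand $\varphi(\sigma_t m) = \Tr(f^{p-1}t^d m) = t^{d/p}\varphi(m)$; on the other hand, $\varphi(m) = \Tr(f^{p-1}m)$ is $\lambda$-homogeneous of degree $\bigl((p-1)\deg_\lambda f + d + |\lambda|\bigr)/p - |\lambda|$, which equals $d/p$ exactly when $\deg_\lambda f = |\lambda|$, so $\sigma_t(\varphi(m)) = t^{d/p}\varphi(m)$ as well. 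Your first ``remedy''---that $f^{p-1}$ is $\Gm$-invariant as a section of $\omega^{1-p}$---is precisely this commutation restated, not a workaround for a nonexistent problem; and the Rees-algebra detour is likewise unnecessary. The paper's proof simply observes that $f$ being a $\lambda$-weight vector makes $\Gm$ preserve $\ker\varphi$ (equivalently, commute with $\varphi$), hence permute the finitely many compatibly split ideals, hence fix each one by connectedness.
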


\newcommand\YY{{\mathcal Y}}

\begin{Theorem}\label{thm:frobdegen}
  Let $f \in \FF_p[x_1,\ldots,x_n]$ be of degree at most $n$.
  
  If $\prod_i x_i$ is not in $f$'s Newton polytope, e.g. if
  $\deg f < n$, then $\Tr(f^{p-1})=0$. Hence no multiple of
  $\Tr(f^{p-1}\bullet)$ is a Frobenius splitting.
  Hereafter let $\lambda$ be a weighting such that $\prod_i x_i$ 
  (or some $\FF_p$-multiple) lies in $\init(f)$. 
  In particular $(1,1,\ldots,1)$ lies in $f$'s Newton polytope. 

  \begin{enumerate}
  \item 
    $\Tr(f^{p-1}) = \Tr(\init(f)^{p-1})$,
    so (a multiple of) $\Tr(f^{p-1}\bullet)$
    defines a Frobenius splitting iff (the same multiple of)
    $\Tr(\init(f)^{p-1}\bullet)$ does.
  \item Assume hereafter that some multiple of 
    $\Tr(f^{p-1}\bullet)$ and $\Tr(\init(f)^{p-1}\bullet)$ do define splittings.
    Let $I$ be an ideal compatibly split with respect to the first splitting.
    Then $\init(I)$ is compatibly split with respect to the second splitting.
  \item Let $\YY_f$ and $\YY_{\init f}$ denote the
    poset of irreducible varieties compatibly split by 
    $\Tr(f^{p-1}\bullet)$ and $\Tr(\init(f)^{p-1}\bullet)$ respectively,
    partially ordered by inclusion. Then the map     
    $$ \pi_{f,\init}: \YY_{\init f} \to \YY_f, \qquad 
    Y' \mapsto \text{the unique minimal $Y$ such that} \init Y \supseteq Y' $$
    is well-defined, order-preserving, and surjective.
    Moreover, if $Y_1' \in \YY_{\init f}$, $Y_2 \in \YY_f$, 
    and $Y_1 = \pi_{f,\init}(Y_1')$, then
    $$ Y_2 \supseteq Y_1 \qquad \iff \qquad 
    \exists\ Y_2'\supseteq Y_1',\ \pi_{f,\init}(Y_2')=Y_2
    \quad \text{where } Y_2'\in\YY_{\init f},     $$
    so the partial order on $\YY_f$ is determined by that on $\YY_{\init f}$
    plus the map $\pi_{f,\init}$. 
    \junk{
      If $f$ is homogeneous, then the quotient $\YY_f$ of $\YY_{\init f}$
      is determined up to unique isomorphism by the function 
      $\YY_{\init f}\to \naturals$, $Y' \mapsto \dim \pi_{f,\init}(Y')$.
    }
  \item Assume $f$ is homogeneous, and let
    $ \YY^{=\dim}_{\init f} 
    := \{ Y' \in \YY_{\init f} : \dim \pi_{f,\init}(Y') = \dim Y' \}$. \\
    Then for any $Y \in \YY_{\init f}$,
    $$ \sum_{Y' \in\ \pi_{f,\init}^{-1}(Y) \ \cap\ \YY^{=\dim}_{\init f}} 
    \deg Y' = \deg Y. $$
  \end{enumerate}
\end{Theorem}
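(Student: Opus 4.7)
For part (1), the plan is to analyze which monomials of $f^{p-1}$ survive the operator $\Tr$. A monomial $\prod x_i^{e_i}$ survives only when each $e_i \equiv -1 \pmod{p}$, so each $e_i \ge p-1$; since $\deg f^{p-1} \le n(p-1)$ forces $\sum e_i \le n(p-1)$, the only possibility is $e_i = p-1$ for all $i$. Thus only the monomial $\prod x_i^{p-1}$ can contribute, and its coefficient in $f^{p-1}$ arises from products of $p-1$ monomials of $f$ whose exponent vectors sum to $(p-1,\ldots,p-1) = (p-1)(1,\ldots,1)$; since $(1,\ldots,1)$ lies on the $\lambda$-maximizing face of $f$'s Newton polytope, each contributing vector must itself lie on that face, i.e.\ must come from $\init f$. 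This gives $\Tr(f^{p-1}) = \Tr(\init(f)^{p-1})$, and the ``splitting iff'' clause follows because each near-splitting satisfies $\varphi(1) = c \cdot \Tr((\cdot)^{p-1})$, making $\varphi(1)=1$ the same condition for both. For part (2), it suffices to check $\varphi'(\init g) \in \init I$ for $g \in I$ since $\init I$ is the $\FF_p$-span of such $\init g$. Using that $\init$ is multiplicative (the $\lambda$-max face of a Minkowski sum is the sum of the $\lambda$-max faces), $\init(f^{p-1}g) = \init(f)^{p-1}\init(g)$; Lemma~\ref{lem:Trinit} then gives $\varphi'(\init g) \in \{0,\ \init(\varphi(g))\}$, and either way this lies in $\init I$.

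Part (3) rests on two ingredients: by part (2), $\init Y$ is reduced with irreducible components lying in $\YY_{\init f}$; and Gr\"obner degeneration preserves dimension, so these components are equidimensional of dimension $\dim Y$. \emph{Surjectivity} is immediate: given $Y \in \YY_f$, pick any component $Y'$ of $\init Y$, so $Y \in S := \{Y_0\in\YY_f : \init Y_0 \supseteq Y'\}$; any $Y_0 \subsetneq Y$ in $S$ would give $\dim Y_0 < \dim Y = \dim Y'$ while $\init Y_0 \supseteq Y'$ forces $\dim Y_0 \ge \dim Y'$, contradiction, so $Y$ is minimal and $\pi(Y') = Y$. The main obstacle is uniqueness of the minimal element of $S$: given hypothetical distinct minimals $Y_1, Y_2$, one wants a common strictly smaller element of $S$. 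The plan is to package $\varphi_f$ and $\varphi_{\init f}$ into a single Frobenius splitting on the Rees-style ring $\FF_p[x_1,\ldots,x_n,t]$ (using the $t$-homogenization of $f$) whose compatibly-split primes specialize to $\YY_f$ at $t=1$ and to $\YY_{\init f}$ at $t=0$; each $Y' \in \YY_{\init f}$ then lifts to a unique compatibly-split prime of the Rees ring, whose $t=1$ fiber must be $\pi(Y')$. Order-preservation is immediate from the definition. For the order characterization, ``$\Leftarrow$'' is order-preservation; for ``$\Rightarrow$'', given $Y_2 \supseteq Y_1 = \pi(Y_1')$, take $Y_2'$ to be the component of $\init Y_2$ containing $Y_1'$ (which exists since $\init Y_2 \supseteq \init Y_1 \supseteq Y_1'$), and then $\pi(Y_2') = Y_2$ by the surjectivity dimension argument.

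For part (4), homogeneity of $f$ makes the Gr\"obner degeneration a flat $\Gm$-equivariant family, so $\deg Y = \deg \init Y$. By part (2), $\init Y$ is reduced and equidimensional, so its degree is the sum of the degrees of its irreducible components. The surjectivity argument in part (3) identifies these components as precisely those $Y' \in \YY_{\init f}$ with $\pi_{f,\init}(Y') = Y$ and $\dim Y' = \dim Y$, i.e., the elements of $\pi_{f,\init}^{-1}(Y) \cap \YY^{=\dim}_{\init f}$. Summing the degrees of components yields the identity.
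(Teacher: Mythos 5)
Your parts (1), (2), and (4) match the paper's approach: part (1) via degree-counting plus the observation that the coefficient of $\prod_i x_i^{p-1}$ in $f^{p-1}$ only sees $\init(f)$ (the paper isolates this as Lemma~\ref{lem:degbound}); part (2) via multiplicativity of $\init$ and Lemma~\ref{lem:Trinit}; part (4) via equidimensionality of $\init Y$ and degree-additivity over components. These are essentially the paper's arguments.

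The genuine gap is in the well-definedness step of part (3) --- showing there is a \emph{unique} minimal $Y$ with $\init Y \supseteq Y'$. You propose packaging the two splittings into a single splitting on a Rees-style ring $\FF_p[x_1,\ldots,x_n,t]$, but this is only a plan, not an argument: you would need $t$ to live in a perfect base ring for the $\Tr$ machinery to apply (the paper uses $\FF_p[t^r : r\in\QQ_+]$ when it homogenizes in the proof of Lemma~\ref{lem:Trinit}, not $\FF_p[t]$), and even then you assert without proof that each $Y'$ lifts to a unique compatibly-split prime of the Rees ring and that its $t=1$ fiber is the desired minimal $Y$. Those claims are not obvious and are roughly the content one needs to prove anyway. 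The paper's actual argument is concrete and rests on Corollary~\ref{cor:Grobner} (built on Lemma~\ref{lem:Grobner}): since all relevant initial ideals are radical, $\init(Y_1 \cap Y_2) = \init Y_1 \cap \init Y_2$. So if $Y' \subseteq \init Y_1 \cap \init Y_2$ then $Y' \subseteq \init(Y_1 \cap Y_2) = \bigcup_Z \init Z$ (over components $Z$ of $Y_1 \cap Y_2$, which are themselves compatibly split), hence $Y' \subseteq \init Z$ for some $Z \subseteq Y_1, Y_2$; minimality of $Y_1, Y_2$ then forces $Y_1 = Z = Y_2$. This commutation of intersection with degeneration is the key ingredient your proposal is missing; without it (or a worked-out replacement) the minimal element could a priori fail to be unique. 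Your surjectivity, order-preservation, and order-characterization arguments are fine and match the paper once well-definedness is secured.
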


Some examples of the poset maps are given in figure \ref{fig:elliptic}.
Note that conclusion (2) runs the opposite direction
of a standard principle, which is that
for any ideal $I$, if $\init(I)$ is radical, then $I$ is radical.

For any polynomial $f$ whose Newton polytope contains $\prod_i x_i$, 
there is a unique minimal face of the polytope that contains it, 
and a corresponding minimal $\init(f)$ (minimal in number of terms).  
In this sense it is enough to study hypersurfaces $f=0$ where $\prod_i x_i$ 
lies in the interior of $f$'s Newton polytope.

One can also allow $\lambda$ to take values in $\naturals[\varepsilon]$,
where $\varepsilon$ is interpreted as infinitesimally positive
(i.e. $1 > N_1\varepsilon > N_2\varepsilon^2 > \ldots > 0$
for any $N_1,N_2,\ldots \in \naturals_+$)
with which to break ties. This doesn't change any of the results;
indeed, for any fixed $I$ and any such $\lambda$, there is a
$\lambda'$ taking only $\naturals$-values with 
$\init_\lambda(I) = \init_{\lambda'}(I)$ \cite{Sturmfels96}.
One sort of $\lambda$ that will often interest us is
$\lambda=(0,\ldots,0,1,0,\ldots,0)$, which we may indicate by
writing $\init_i$ where the $1$ is in the $i$th place.

In theorems \ref{thm:Trpointcount} and \ref{thm:frobdegen} the
interesting case is when $\deg f = n$, and there is little change
if $f$ is replaced by its degree $n$ homogeneous component. (Indeed, this is
the $\lambda = (1,1,\ldots,1)$ case of theorem \ref{thm:frobdegen}.)
Then $f=0$ defines an anticanonical hypersurface of $\PP^{n-1}$,
so, when smooth, a Calabi-Yau hypersurface. In the $n=3$ case,
this is an elliptic curve, split for infinitely many $p$
(see e.g. \cite{DP}).
However, the hypersurfaces that interest us are typically highly
reducible and in particular, singular.

Theorems \ref{thm:Trpointcount} and \ref{thm:frobdegen} taken
together show that certain Gr\"obner degenerations (meaning, 
replacements of $f$ by $\init(f)$) of a hypersurface
don't change the number of $\FF_p$-solutions, mod $p$. However, the
number of solutions does indeed change. For example, $xy=1$ 
has $p-1$ solutions in $\FF_p^2$, whereas $xy=0$ has $2p-1$.

\begin{figure}[h]
  \centering
  \epsfig{file=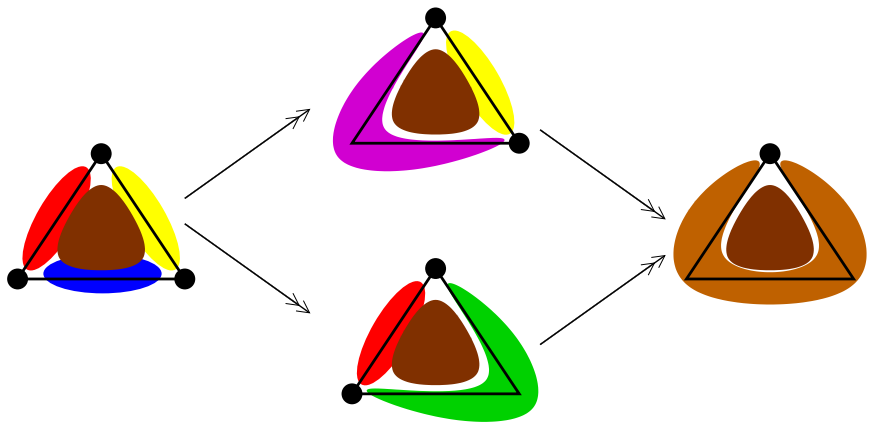,width=4.5in}
  \caption{The posets $\YY_f$ (minus each one's minimal element, $\{\vec 0\}$)
    defined in theorem \ref{thm:frobdegen} part (3)
    for 
    $f = xyz$ (left), $f = y(xz + y^2)$ (top), $f = z(xy + z^2)$ (bottom), 
    and $f = xyz + y^3 + z^3$ (right) drawn as identifications of the 
    lattice of faces of a $2$-simplex.
    The maps between them come from evident choices of $\init$.
  }
  \label{fig:elliptic}
\end{figure}

\subsection{Degeneration and point-counting over $\FF_p$} 

We study a very special kind of degeneration in this section,
that we called a {\em geometric vertex decomposition} 
in \cite{KMY,automatically}. 

Let $X \subseteq \AA^n$ be reduced and irreducible,
and split $\AA^n$ as a Cartesian product $H\times L$, standing for
Hyperplane and Line. Let $\Gm$ act on $\AA^n$ by scaling the coordinate on $L$,
i.e.
$$ z\cdot (h,\ell) = (h,z\ell),
\qquad \text{and define} \qquad X' := \lim_{t\to 0} t\cdot X $$
using this action. 

It is quite easy to determine the limit scheme \footnote{%
  This is defined in the usual way, as the zero fiber of the closure of 
  $\Union_{z\in\Gm} \left( \{z\} \times (z\cdot X) \right) 
  \subseteq \AA^1 \times (H\times L)$.
  One can also consider the limit {\em branchvariety} 
  as in \cite{automatically}, but this will coincide with the limit scheme
  under conditions (1) or (2) in the theorem.}
$X'$ as a set.
Let $\Pi \subseteq H$ be the closure of 
the image of the projection of $X$ to $H$,
let $\barX$ be the closure of $X$ inside $H \times (L \union \{\infty\})$, and 
define $\Lambda\subseteq H$ by
$\Lambda \times \{\infty\} := \barX \cap (H \times \{\infty\}) 
= \barX \setminus X$. Then
$$ X' = (\Pi \times \{0\}) \cup (\Lambda \times L)$$
as a set \cite[theorem 2.2]{KMY}.

Though it was not pointed out in \cite{KMY}, 
none of this changes if $H$ is allowed to be an arbitrary scheme $H'$
(though $L$ must remain $\AA^1$). One can temporarily replace $H'$ 
by an affine patch $U$ embedded as a closed subset of an affine space $H$, 
and $X$ by $X \cap (U\times L)$, then apply the theorems; the resulting
statements then glue together to give the one for $X \subseteq H' \times L$.

\begin{Theorem}\label{thm:countdegen}
  Assume that $X \subseteq H\times L,\Pi,\Lambda,\barX,X'$ are as above.
  Assume one of the following:
  \begin{enumerate}
  \item $X$ is irreducible, $\Pi$ is normal, 
    the projection $X\to \Pi$ is degree $1$, and $\Lambda$ is reduced,
  \item $X'$ is reduced, or
  \item the fibers of $\barX \to \Pi$ are connected.
  \end{enumerate}
  (In fact (1) $\implies$ (2) $\implies$ (3) $\implies$ the projection
  $\barX\to \Pi$ is degree $1$ or $0$ to any component.)
  Let
  $$ \Lambda' := \left\{ h \in H : \{h\} \times L \subseteq X \right\}. $$
  Then $\Pi \supseteq \Lambda \supseteq \Lambda'$, and the
  image of the projection $X \to \Pi$ 
  is $\Pi \setminus \Lambda \union \Lambda'$.

  There is a decomposition 
  $X = (X\setminus (\Lambda'\times L)) \coprod (\Lambda'\times L)$,
  where $\pi$ gives an isomorphism of the first piece with 
  $\Pi\setminus\Lambda$.
  Consequently, 
  $$ [X'] = [X] + [\AA^1] \cdot [\Lambda \setminus \Lambda'] $$
  as elements of the Grothendieck ring of algebraic varieties. 
  In particular, if $X,H,L$ are defined over $\FF_p$,
  and $|A|$ denotes the number of $\FF_p$-rational points on $A$, then
  $$ |X'| = |X| + p\, \big|\Lambda \setminus \Lambda'\big| \equiv |X| \bmod p.$$

  There is a constructible injection $\iota: X \to X'$ defined by
  $$ \iota(h,\ell) := 
  \begin{cases}
    (h,\ell) &\text{if $h\in \Lambda'$} \\
    (h,0) &\text{if $h\notin \Lambda'$} 
  \end{cases}
  $$
  whose image is the complement of $(\Lambda\setminus \Lambda')\times L$.

  If $X'$ is reduced, then the proper map $\barX \to \Pi$ takes
  $[\calO_{\barX}] \mapsto [\calO_\Pi]$ as elements of $K$-theory
  (or even $G$-equivariant $K$-theory, if some group $G$ acts on $H$ and
  linearly on $L$, preserving $X \subseteq H\times L$ 
  and hence also $\Pi \subseteq H$).
\end{Theorem}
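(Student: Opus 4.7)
The plan is to first establish the set-theoretic picture of how $\pi:\barX\to\Pi$ behaves over the three strata $\Pi\setminus\Lambda$, $\Lambda\setminus\Lambda'$, and $\Lambda'$, and only then extract the Grothendieck- and $K$-theoretic consequences. The crucial input is that under any of (1)--(3) the map $\pi$ has degree $1$ to each component of $\Pi$ it dominates. I would first verify the containments $\Pi\supseteq\Lambda\supseteq\Lambda'$ directly from definitions: $\Lambda\subseteq\Pi$ since $\barX$ projects into the closure of $X$'s image in $H$, and $\Lambda'\subseteq\Lambda$ since the closure in $H\times(L\union\{\infty\})$ of $\{h\}\times L$ picks up $(h,\infty)$. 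Next I would prove the chain $(1)\Rightarrow(2)\Rightarrow(3)\Rightarrow$ ``degree $1$ or $0$.'' The last implication follows from $\pi$ being proper with generically quasi-finite fibers (a dimension count implicit in the set description $X'=(\Pi\times\{0\})\union(\Lambda\times L)$ quoted from \cite[theorem 2.2]{KMY}) combined with the fact that a proper generically quasi-finite map with connected fibers has degree $1$ on any component it dominates. For $(2)\Rightarrow(3)$ I would use a Stein-factorization-and-semicontinuity argument on the flat family whose special fiber is $X'$: reducedness of $X'$ ensures that the Stein factorization of the family has connected fibers generically, hence so does $\pi$. For $(1)\Rightarrow(2)$ I would check directly that the scheme $(\Pi\times\{0\})\union(\Lambda\times L)$ is reduced, using reducedness of $\Pi$ (from normality) and $\Lambda$ under (1), together with the absence of embedded components that degree $1$ guarantees in the limit.

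Given degree $1$, the fiber dichotomy is immediate. Over $h\in\Pi\setminus\Lambda$ the fiber of $\barX$ avoids $\infty$ and so collapses to a single point of $L$; over $h\in\Lambda\setminus\Lambda'$ it meets $\{\infty\}$ and by degree $1$ equals $\{\infty\}$, leaving empty fiber in $X$; over $h\in\Lambda'$ all of $\{h\}\times L$ sits in $X$. This yields the image description $\Pi\setminus\Lambda\union\Lambda'$ and the set-level decomposition $X=(X\setminus(\Lambda'\times L))\coprod(\Lambda'\times L)$. To promote the resulting bijection $X\setminus(\Lambda'\times L)\to\Pi\setminus\Lambda$ to a scheme isomorphism, I would invoke Zariski's main theorem: under (1), $\Pi\setminus\Lambda$ is normal and the restriction of $\pi$ is proper (inherited from $\barX\to\Pi$, since $X$ and $\barX$ coincide over $\Pi\setminus\Lambda$), bijective, and birational, hence an iso. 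Under the weaker hypotheses I would reduce to (1) by passing to the normalization of $\Pi$ and using the preserved degree-$1$ property to argue that no nontrivial cover appears.

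Everything downstream is mechanical. Inclusion-exclusion applied to $X'=(\Pi\times\{0\})\union(\Lambda\times L)$ (intersection $\Lambda\times\{0\}$) and to the decomposition from (d) yields $[X']=[\Pi]+([\AA^1]-1)[\Lambda]$ and $[X]=[\Pi]-[\Lambda]+[\AA^1][\Lambda']$, whose difference is $[\AA^1][\Lambda\setminus\Lambda']$; applying the ring homomorphism $|\cdot|$ with $|\AA^1|=p$ gives the mod-$p$ congruence. The map $\iota$ is obviously constructible, and injectivity is immediate from the fiber dichotomy: the identity on $\Lambda'\times L$, bijective onto $(\Pi\setminus\Lambda)\times\{0\}$ off of it, with the two images disjoint in $X'$.

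The main obstacle I anticipate is the final $K$-theoretic assertion $\pi_*[\calO_\barX]=[\calO_\Pi]$ under hypothesis (2), requiring both $\pi_*\calO_\barX\iso\calO_\Pi$ and $R^i\pi_*\calO_\barX=0$ for $i>0$. For vanishing I would use cohomology-and-base-change: each scheme-theoretic fiber of $\pi$ is either a reduced point or all of $\Pone$ (reducedness of fibers being a consequence of reducedness of $X'$ under (2)), and both have $H^{>0}(\calO)=0$. For the $R^0$ identity I would combine the birational isomorphism over $\Pi\setminus\Lambda$ from step two with reducedness of $X'$ to preclude a nontrivial finite extension sitting over $\Lambda$. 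The $G$-equivariant refinement is automatic once the non-equivariant statement is in hand, since the entire construction of $\barX$ is $G$-equivariant.
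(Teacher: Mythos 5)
Your outline follows the right shape — fiber dichotomy, set-level decomposition, inclusion-exclusion in the Grothendieck ring — but several of the key steps are resolved by abstract machinery where the paper uses an explicit ideal-theoretic computation, and at least two of your replacements have real gaps.

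The central idea you are missing is the explicit lifting of generators. Under hypothesis (2), $X' = (\Pi\times\{0\})\cup(\Lambda\times L)$ being reduced gives an explicit presentation $I_{X'} = \langle (p_i), (\ell q_i)\rangle$ where the $(p_i)$ cut out $\Pi$ and the $(p_i),(q_i)$ cut out $\Lambda$; the paper then observes that the generators of $I_X$ can be taken to be $\langle (p_i), (\ell q_i + q'_i)\rangle$ (since $I_{X'}=\init_\ell I_X$). This lets one compute $I_{\barX} = \langle (p_i), (\ell q_i + m q'_i)\rangle$ after homogenizing $L$ to $\Pone$, so each fiber of $\barX\to H$ is cut out by \emph{linear} forms on $\Pone$ — hence is empty, a point, or all of $\Pone$. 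That is the entire content of (2)$\Rightarrow$(3). Your proposed Stein-factorization-and-semicontinuity route does not obviously substitute for this: since $\barX\to\Pi$ has fibers of varying dimension it is not flat, so semicontinuity does not apply in any off-the-shelf form, and "the Stein factorization has connected fibers" is tautological — the thing to show is that the Stein factorization is the identity on $\Pi$, and it is not clear how you extract this from reducedness of $X'$ without the explicit generators.

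The same gap resurfaces in your isomorphism argument. The paper proves $X\setminus(\Lambda'\times L)\iso \Pi\setminus\Lambda$ by localizing to $\Lambda=\nulset$, where the relation $\ell + q'_i=0$ lets one eliminate $\ell$ outright; no normality is needed. Your ZMT argument works under (1), where $\Pi$ is normal, but under (2) or (3) $\Pi$ need not be normal, and "pass to the normalization and argue no nontrivial cover appears" changes the target of the map and does not descend to the iso you want without further argument.

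Finally, your $K$-theoretic argument has the same flatness problem: cohomology-and-base-change (and Grauert's theorem) require $\calO_\barX$ flat over $\Pi$, which fails here. The paper sidesteps this entirely by degenerating $\barX$ to $\barX' = (\Pi\times\{0\})\cup(\Lambda\times\Pone)$ inside $H\times(L\cup\{\infty\})$ — this Gröbner family \emph{is} flat over $\AA^1$, so $[\calO_\barX]=[\calO_{\barX'}]$ in $K_G(H\times\Pone)$ — and then computes $\pi_![\calO_{\barX'}]$ by Mayer--Vietoris, using only that $\Pone\to\pt$ has $\calO\mapsto\calO$ with no higher direct images. This is both shorter and avoids any claim about the individual fibers of $\barX\to\Pi$. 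I would recommend adopting the paper's degeneration argument here; your approach would require an independent (and delicate) argument that $R^1\pi_*\calO_\barX$ vanishes and $\pi_*\calO_\barX = \calO_\Pi$ for a non-flat proper morphism, which is exactly the kind of thing the flat degeneration is designed to avoid.

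The Grothendieck-ring bookkeeping, the description of $\iota$, and the $(3)\Rightarrow$ degree $1$ or $0$ step are all fine and essentially the same as in the paper.
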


The $\dim H=1$ example of $X = \{h^2 \ell = 1\}$ with $p-1$ points,
degenerating to $X' = \{h^2 \ell = 0\}$ with $2p-1$ points, 
satisfies (3) and shows that $X'$ need not be reduced for this theorem.  
However, that is the condition we will make use of.
In section
\ref{sec:pfcountdegen} we give an example with $\Lambda' \neq \emptyset$.

\begin{Proposition}\label{prop:gvdsplit}
  Let $f \in \FF[h_1,\ldots,h_{n-1},\ell]$ be of degree $n$, of the
  form $f = \ell g_1 + g_2$ where $\ell\notdivides g_1,g_2$.
  Let $(h_i),\ell$ be the coordinates on $H,L$.
  If $\Tr_H(g_1^{p-1}\bullet)$ defines a Frobenius splitting on $H$,
  then $\Tr_{H\times L}(f^{p-1}\bullet)$ defines one on $H\times L$.
  Assume this hereafter.

  Let $X \subseteq H\times L$ be a subscheme compatibly split by
  $\Tr_H(g_1^{p-1}\bullet)$,
  and let $\Lambda' \subseteq \Lambda \subseteq \Pi \subseteq H$ be as above. 
  Then theorem \ref{thm:countdegen} applies.
  Moreover, 
  $\Pi$ and $\Lambda$ are compatibly split by $\Tr_H(g_1^{p-1}\bullet)$,
  though $\Lambda'$ may not be. 
\end{Proposition}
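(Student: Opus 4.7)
\emph{Plan.} The argument reduces to one calculation followed by routine manipulations with compatibly split ideals. The calculation is: for any $F \in \FF[h_1,\ldots,h_{n-1}]$,
$$\Tr_{H\times L}(\ell^{p-1} F) = \Tr_H(F),$$
which is immediate from the definition of $\Tr$, since in the range $0\leq k\leq p-1$ only $k=p-1$ makes $\ell^{k}\cdot\ell$ a $p$-th power, and then $\sqrt[p]{\ell^p}/\ell=1$. Expanding $f^{p-1}=(\ell g_1+g_2)^{p-1}$ by the binomial theorem and reading off the $\ell^{p-1}$-coefficient (taking the natural interpretation of the setup, namely $g_1,g_2\in\FF[h_1,\ldots,h_{n-1}]$ so that $f$ has $\ell$-degree one) yields $g_1^{p-1}$, so
$$\Tr_{H\times L}(f^{p-1}) = \Tr_H(g_1^{p-1}) = 1$$
by hypothesis. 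The remaining splitting axioms are automatic for $\Tr(\bullet)$-type maps, so $\Tr_{H\times L}(f^{p-1}\bullet)$ is a Frobenius splitting. The same calculation applied to $f^{p-1}u$ for $u\in\FF[h]$ gives the key identity
$$\Tr_{H\times L}(f^{p-1} u) = \Tr_H(g_1^{p-1} u),$$
which says that the two splittings agree on polynomials pulled back from $H$.

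For $\Pi$: its defining ideal is $I(\Pi)=I(X)\cap\FF[h]$, and it is already radical since $I(X)$ is. For $u\in I(X)\cap\FF[h]$, the key identity writes $\Tr_H(g_1^{p-1}u)$ as $\Tr_{H\times L}(f^{p-1}u)$, which lies in $I(X)$ by hypothesis and visibly in $\FF[h]$, hence in $I(\Pi)$. So $\Tr_H(g_1^{p-1}\bullet)$ compatibly splits $I(\Pi)$.

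For $\Lambda$: take the weighting $\lambda$ with $\lambda(\ell)=1$ and $\lambda(h_i)=0$, so that $\init_\lambda(f)=\ell g_1$, and invoke theorem~\ref{thm:frobdegen}(2) to conclude that $\init_\lambda(I(X))$ is compatibly split by $\Tr_{H\times L}(\ell^{p-1}g_1^{p-1}\bullet)$; in particular $X'$ is reduced, which secures hypothesis~(2) of theorem~\ref{thm:countdegen}. Part~(4) of the basic theorem on compatibly split ideals (colons and saturations) then promotes this to compatibly split status for $\init_\lambda(I(X)):\ell^\infty$, which equals $I(\Lambda\times L)=I(\Lambda)\cdot\FF[h,\ell]$ since $\Lambda\times L$ is the union of components of the reduced scheme $X'$ not lying in $\{\ell=0\}$. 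A final application of the key identity, with $\ell g_1$ in place of $f$ (so that the isolated $\ell$-coefficient is still $g_1^{p-1}$), shows that $I(\Lambda)=I(\Lambda\times L)\cap\FF[h]$ is compatibly split by $\Tr_H(g_1^{p-1}\bullet)$.

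The main obstacle I expect is bookkeeping around which $\ell$-coefficient of $f^{p-1}$ one isolates: multiplying by $u$ alone picks out the coefficient $g_1^{p-1}u$ and lands on the correct $H$-splitting, whereas multiplying by $u\ell^{p-1}$ would pick out $g_2^{p-1}u$ and relate to a different (and wrong) splitting. Once the key identity is pinned down in the correct form, everything else — intersecting with $\FF[h]$, saturating in $\ell$, and invoking theorem~\ref{thm:frobdegen}(2) on the $\ell$-grading — follows from the built-in closure properties of compatibly split ideals.
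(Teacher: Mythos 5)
Your proof is correct, and it departs from the paper's in a couple of instructive ways. The paper establishes the first claim by invoking theorem~\ref{thm:frobdegen}(1) with $\init_\ell f = \ell g_1$, and then gets everything else by applying part~(2) (the paper's reference to ``part (3)'' is a typo) to conclude that $X'$ is compatibly split by $\Tr_{H\times L}((\ell g_1)^{p-1}\bullet)$, hence reduced, hence equal to $(\Pi\times\{0\})\cup(\Lambda\times L)$ with compatibly split components; passing to the induced splitting on $H\times\{0\}$ then gives $\Pi$ and $\Lambda$ compatibly split in $H$. You instead prove the identity $\Tr_{H\times L}(f^{p-1}u)=\Tr_H(g_1^{p-1}u)$ for $u\in\FF[h]$ by a direct binomial expansion, which is the elementary, self-contained core of theorem~\ref{thm:frobdegen}(1) in this very special situation (the paper itself notes after the proof that proposition~\ref{prop:gvdsplit} gives an independent proof of that part of theorem~\ref{thm:frobdegen}). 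You then use this identity to handle $\Pi$ \emph{without ever degenerating to $X'$}: you hit $I(X)\cap\FF[h]$ directly with the splitting, observe the output lands back in $\FF[h]$, and you are done. This is cleaner than the paper's detour through $X'$ for the $\Pi$ claim. For $\Lambda$ you do pass through $X'$ (as does the paper), but you extract $I(\Lambda\times L)$ by saturating at $\ell$ and invoking part~(4) of the basic theorem, whereas the paper extracts it as a union of components of the reduced $X'$; these are equivalent since $I(X')=I(\Pi\times\{0\})\cap I(\Lambda\times L)$ and $\ell\in I(\Pi\times\{0\})$. One small point worth keeping explicit, as you did: the proposition's hypothesis $\ell\nmid g_1,g_2$ should be read as $g_1,g_2\in\FF[h]$, i.e., $f$ has $\ell$-degree one — this is what makes $\init_\ell f=\ell g_1$ and what makes your binomial calculation pick out exactly the $k=p-1$ term.
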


\begin{proof}
  The first statement follows from theorem \ref{thm:frobdegen} part (1).
  Its part (3) implies that $X'$
  is compatibly split by $\Tr_{H\times L}((\ell g_1)^{p-1}\bullet)$,
  so it is reduced, giving condition (2) of theorem \ref{thm:countdegen}.

  Being reduced, $X' = (\Pi\times \{0\}) \cup (\Lambda\times L)$,
  hence $\Pi\times \{0\}$, $\Lambda\times L$, 
  and $\Lambda\times \{0\} = (\Pi\times \{0\}) \cap (\Lambda\times L)$
  are compatibly split too. It follows that $\Pi,\Lambda$ are compatibly split 
  by $\Tr_{H}(g_1^{p-1}\bullet)$.
\end{proof}

The first example in section \ref{sec:pfcountdegen} is of this type,
and its $\Lambda'$ is not compatibly split in $H$. 

Applying proposition \ref{prop:gvdsplit} to $X = \{f=0\}$ itself,
we see the interrelation between
theorems \ref{thm:Trpointcount}, \ref{thm:frobdegen}, and \ref{thm:countdegen}.
Theorem \ref{thm:countdegen} says that $X$ and $X'$
have the same number of solutions, mod $p$.
Applying theorem \ref{thm:Trpointcount}, we see that 
$\Tr(f^{p-1}\bullet)$ defines a Frobenius splitting iff 
$\Tr(\init(f)^{p-1}\bullet)$ does. 
This gives an independent proof of theorem \ref{thm:frobdegen} part (1)
in this special situation.

\subsection{An important special case, which generalizes to schemes}

In this section we ask that $\init(f) = \prod_i x_i$. This condition
is a very restrictive one on degree $n$ polynomials; for example the
hypersurface defined by $f=0$ is necessarily singular. But there are
some important examples that have this property, and our results are
strongest here.

\begin{Theorem}\label{thm:initprod}
  Let $f \in \integers[x_1,\ldots,x_n]$ be a degree $k$ polynomial
  whose lexicographically first term is (a $\integers$-multiple of)
  a product of $k$ distinct variables.

  Let $Y$ be one of the schemes constructed from the hypersurface
  $f=0$ by taking components, intersecting, taking unions, and repeating.
  (Or more generally, let $Y$ be compatibly split with respect to the
  splitting $\Tr(f^{p-1}\bullet)$.)
  Then $Y$ is reduced over all but finitely many $p$, and over $\rationals$.

  Let $\lambda$ is the lexicographic weighting
  $(\varepsilon,\varepsilon^2,\ldots,\varepsilon^n)$ on the variables.
  Let $\init Y$ be the initial scheme of $Y$. Then (away from those $p$)
  $\init Y$ is a Stanley-Reisner scheme.
  There is a bijective constructible map $\AA^n$ to itself, 
  taking each $Y$ into its $\init Y$.
\end{Theorem}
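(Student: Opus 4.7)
The plan is to reduce every assertion to the earlier theorems in the section, exploiting the observation that the lex weighting collapses $\init_\lambda f$ to a single monomial. The hypothesis forces $k=n$: the leading term is a product of $k$ distinct variables, so $k\le n$, but Theorem~\ref{thm:frobdegen} requires $\prod_i x_i$ to lie in the Newton polytope of $f$ for $\Tr(f^{p-1}\bullet)$ to be a splitting, forcing $\deg f = k \ge n$. Thus the lex-first term is $c\prod_{i=1}^n x_i$ for some $c\in\integers$, and $\init_\lambda f = c\prod_i x_i$. For any prime $p$ not dividing $c$ we have $c^{p-1}\equiv 1 \pmod p$, so $\Tr(\init f^{p-1}\bullet)$ is exactly the standard splitting of Lemma~\ref{lem:coordspaces}, and by Theorem~\ref{thm:frobdegen}(1) the same holds for $\Tr(f^{p-1}\bullet)$. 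This handles the splitting hypothesis for all but finitely many $p$.

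Given the splitting, reducedness of each $Y$ over $\FF_p$ is immediate from Corollary~\ref{cor:allradical}. Reducedness over $\rationals$ follows by a standard generic-fiber argument: if $Y_\rationals$ had a nonzero nilpotent, the same would hold over $\FF_p$ for all but finitely many $p$, contradicting the above. For the initial scheme, Theorem~\ref{thm:frobdegen}(2) says that $\init Y$ is compatibly split with respect to $\Tr(\init f^{p-1}\bullet)$, i.e.\ the standard splitting, and Lemma~\ref{lem:coordspaces} identifies those compatibly split subschemes as exactly the Stanley--Reisner schemes.

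For the bijective constructible map I would iterate Theorem~\ref{thm:countdegen} via Proposition~\ref{prop:gvdsplit}, once per variable. Decompose the lex weighting as the sequence of one-variable degenerations $\init_1, \init_2, \ldots, \init_n$. At step $i$, let $f^{(i-1)} := \init_{i-1}\cdots\init_1 f$; an easy monomial check shows $f^{(i-1)}$ still has lex-first term $c\prod_j x_j$, so we may write $f^{(i-1)} = x_i g_1 + g_0$ with $g_1, g_0 \in \FF_p[x_1,\ldots,\widehat{x_i},\ldots,x_n]$ and $g_1$ of the form required by the inductive hypothesis on the remaining variables. Proposition~\ref{prop:gvdsplit} then supplies a splitting and a constructible injection on compatibly split subschemes. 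I would extend that injection to a bijection $\psi_i:\AA^n\to\AA^n$ by ``straightening the graph'': on $\{g_1\neq 0\}$, shift the $x_i$-coordinate by $g_0/g_1$; on $\{g_1 = 0\}$, act as the identity. Define $\psi := \psi_n\circ\cdots\circ\psi_1$. An induction on $i$ using $\psi_i(\init_{i-1}\cdots\init_1 Y)\subseteq\init_i\cdots\init_1 Y$, combined with Theorem~\ref{thm:frobdegen}(2) which guarantees that $\init_{i-1}\cdots\init_1 Y$ is compatibly split for the next splitting, gives $\psi(Y)\subseteq\init Y$.

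The main obstacle is verifying at each step that $\psi_i$ carries \emph{every} compatibly split $Y$, not merely the distinguished $X = \{f^{(i-1)}=0\}$, into $\init_i Y$. Over $\{g_1\neq 0\}\subseteq H$ the polynomial $f^{(i-1)}$ is linear in $x_i$ with unit leading coefficient, so its zero locus is the section $\{x_i = -g_0/g_1\}$, which $\psi_i$ translates onto $\{x_i = 0\}\subseteq\init_i Y$. To show that an arbitrary compatibly split $Y$, over this locus, is a disjoint union of full fibers (for $h\in\Lambda'_Y$) and of points on this section, I would invoke the decomposition $Y = (Y\setminus(\Lambda'_Y\times L))\sqcup(\Lambda'_Y\times L)$ of Theorem~\ref{thm:countdegen} (which applies by Proposition~\ref{prop:gvdsplit}) together with the fact that $\Pi_Y$ and $\Lambda_Y$ are themselves compatibly split on $H$. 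Whole-fiber pieces are preserved by $\psi_i$ since shifting within a fiber permutes it; section points are sent into $\{x_i=0\}$; on $\{g_1=0\}$ the map is the identity, and the analysis reduces by induction to the restriction of $f^{(i-1)}$ to that sublocus.
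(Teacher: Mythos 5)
Your treatment of reducedness and Stanley--Reisnerness is essentially the paper's, up to two small issues. First, the claim that "the hypothesis forces $k=n$" misreads the theorem: the hypothesis genuinely allows $\deg f = k < n$, in which case $\Tr(f^{p-1}\bullet)$ is indeed not a splitting, and one must reduce to the $k=n$ case by replacing $f$ by $f\cdot\prod\{x_i : x_i\nmid \init f\}$; this only enlarges the collection of $Y$'s the algorithm can produce. Second, the "standard generic-fiber argument" for reducedness over $\rationals$ needs the additional care (spelled out in the paper) that the algorithm performed over $\FF_p$ produces the reductions mod $p$ of the algorithm performed over $\integers$, for all $p$ in a cofinite set; this requires tracking flatness over open subsets of $\Spec\integers$, because primary decompositions can jump at bad primes.

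The serious gap is the bijective constructible map. Your $\psi_i$ is defined by an explicit rational shift $x_i\mapsto x_i+g_0/g_1$ on $\{g_1\ne 0\}$ and the identity on $\{g_1=0\}$. That shift straightens the hypersurface $\{f^{(i-1)}=0\}$ onto $\{x_i=0\}$ over the open locus, and there it does what you want for all compatibly split $Y$ by the union-and-connectedness argument you cite. But the identity is wrong on $\{g_1=0\}$, and "reduces by induction to the restriction of $f^{(i-1)}$ to that sublocus" does not give an induction of the same shape, since $\{g_1=0\}$ is not a coordinate subspace. Concretely, take $n=3$ and $f = x_3(x_1x_2 + x_2^2 + x_3^2) = x_1x_2x_3 + x_2^2x_3 + x_3^3$, which has lex-initial term $x_1x_2x_3$. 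With $L=\{x_1\}$, $H=\{(x_2,x_3)\}$ one has $g_1 = x_2x_3$, and the quadric $Q := \{x_1x_2+x_2^2+x_3^2=0\}$ is a prime component of $\{f=0\}$, hence compatibly split. At $h=(h_2,0)$ with $h_2\ne 0$ we have $g_1(h)=0$, yet $Q$ meets the fiber $\{h\}\times L$ in the single point $(-h_2,h_2,0)$. Your $\psi_1$ fixes this point, but $\init_1 Q = \{x_1x_2=0\}$ and $(-h_2)(h_2)\ne 0$, so $\psi_1(Q)\not\subseteq\init_1 Q$. The paper avoids this by defining the shift $\ell_f(h)$ not from the rational function $g_0/g_1$ but from the position where $Y_h$ (the union of all compatibly split $Y$'s meeting $\{h\}\times L$ in a finite scheme) meets that fiber; this is well-defined exactly because $Y_h$ is itself compatibly split, its $\init_i$ is reduced, and hence Theorem~\ref{thm:countdegen} forces the finite fiber to be a single point. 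That gives a constructible (but genuinely not rational) $\ell_f$, and the composite $\iota=\iota_n\circ\cdots\circ\iota_1$ then carries each $Y$ into $\init Y$ without the failure above.
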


We thank Bernd Sturmfels for his guess that $\init Y$ might
be a Stanley-Reisner ideal,
as a way of understanding corollary \ref{cor:allradical}
without direct reference to Frobenius splitting.
It would be interesting to know if $\init Y$ is reduced over $\integers$
not just $\rationals$, as holds \cite{KM,schubDegen} for the
examples in section \ref{sec:examples}.

It is tempting to pull back the standard paving of $\AA^n$ by tori
(one for each $S \subseteq \{1,\ldots,n\}$, defined by the equations
$\{x_i = 0$ iff $i\in S\}$) to try to get a paving of each $Y$, as in
\cite{Deodhar}. This works as long as each $\Lambda'$ occurring in
this succession of degenerations is compatibly split, but as
mentioned, they may not be.

Under this $\init f = \prod x_i$ condition, 
one can use lemma \ref{lem:coordspaces} and theorem \ref{thm:frobdegen}
to bound the number of $k$-dimensional compatibly split subvarieties
by $n\choose k$, as in \cite{Schwede2} (where they prove this bound without
assuming $\init f = \prod x_i$). If $f$ is homogeneous, 
then theorem \ref{thm:frobdegen} part (4) lets one show that $n\choose k$ also 
bounds the sum of the projective degrees of the $k$-dimensional 
compatibly split subvarieties.

In section \ref{sec:examples} we apply theorem \ref{thm:initprod}
to the general cases $n=2,n=3$, and to two specific stratifications;
the stratification of the space of matrices by matrix Schubert varieties,
and of opposite Bruhat cells by Kazhdan-Lusztig varieties.
To do this, we need the new result (theorem \ref{thm:kl})
that with respect to Bott-Samelson
coordinates on an opposite Bruhat cell, the complement of the big cell
is given by an equation $f$ with $\init f = \prod_i c_i$.

Part of this result has a generalization beyond affine space to
schemes, where it is closely related to a result of \cite{BK}:

\begin{Theorem}\label{thm:schemeversion}
  Let $X$ be a normal variety of dimension $n$, with 
  $\sigma \in H^0(X_{reg},\omega^{-1})$ a section of the anticanonical bundle
  over the regular locus $X_{reg}$. Let $x \in X_{reg}$ have local 
  coordinates $t_1,\ldots,t_n$, where the formal expansion of $\sigma$ at $x$ is
  $$ \sigma = f(t_1,\ldots,t_n)\ (dt_1\wedge \cdots \wedge dt_n)^{-1}. $$
  \begin{enumerate}
  \item (From the proof of \cite[proposition 1.3.11]{BK}.) \\
    If $X$ is complete, and
    the unique lowest-order term of $\sigma$ is $\prod_{i=1}^n t_i$,
    then there exists a unique Frobenius splitting of $X$ that
    compatibly splits the divisor $\{\sigma=0\}$.
    In particular, if $\{\sigma=0\}$ has $n$ components
    smooth at $x$ and meeting transversely there, the coordinates $\{t_i\}$
    can be chosen to ensure this condition on $\sigma$.
  \item If the initial term of $\sigma$ is $\prod_{i=1}^n t_i$ for
    some term order, then there exists a Frobenius splitting of $X$ that
    compatibly splits the divisor $\{\sigma=0\}$.
    If $X$ is complete, then the splitting is unique.
  \end{enumerate}
\end{Theorem}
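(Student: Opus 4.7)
The plan is to construct the splitting $\varphi_\sigma$ directly from $\sigma^{p-1}$ via the standard Cartier-type duality $\mathrm{Hom}_{\calO_{X_{\mathrm{reg}}}}(F_*\calO_{X_{\mathrm{reg}}},\calO_{X_{\mathrm{reg}}}) \cong F_*\omega_{X_{\mathrm{reg}}}^{1-p}$, extend it from $X_{\mathrm{reg}}$ to $X$ by normality (since $X\setminus X_{\mathrm{reg}}$ has codimension $\geq 2$), and then verify the two axioms. In the given local coordinates at $x$, the resulting near-splitting is exactly the operator $g\mapsto \Tr(f^{p-1}g)$ of the paper. Compatibility with the divisor $\{\sigma=0\}$ (locally $\{f=0\}$) is then immediate: $\varphi_\sigma(fh) = \Tr(f^p h) = f\,\Tr(h) \in (f)$.

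The main step is showing that $\varphi_\sigma$ is a genuine Frobenius splitting, i.e.\ $\varphi_\sigma(1) = 1$. Since $\varphi_\sigma(1)\in H^0(X,\calO_X)$, under the completeness hypothesis (of part (1), and of part (2) for uniqueness) it is a scalar, so it suffices to compute its value at $x$; from the formula for $\Tr$, that value is the coefficient of $\prod_i t_i^{p-1}$ in the $t$-expansion of $f^{p-1}$. In part (1), the hypothesis that $f - \prod_i t_i$ has all terms of total order $\geq n+1$ makes $(\prod_i t_i)^{p-1}$ the unique contribution of total degree $n(p-1)$ to $f^{p-1}$; since $\prod_i t_i^{p-1}$ has exactly that degree, its coefficient is $1$. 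In part (2), I realize the term order by a weight $\lambda$ (possibly $\naturals[\varepsilon]$-valued) so that $\prod_i t_i$ is the strict $\lambda$-maximum of the monomials of $f$; by additivity of weights, $\prod_i t_i^{p-1}$ is then the strict $\lambda$-maximum of $f^{p-1}$, arising only from $(\prod_i t_i)^{p-1}$, so again the coefficient is $1$. Either way $\varphi_\sigma(1) \equiv 1$.

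Uniqueness under the completeness hypothesis follows because any splitting $\varphi'$ compatibly splitting $\{\sigma=0\}$ corresponds under the same duality to a section $\tau\in H^0(X,\omega^{1-p})$ vanishing along $\{\sigma=0\}$ to order at least $p-1$ (the standard BK divisor-compatibility criterion). Hence $\tau/\sigma^{p-1}\in H^0(X,\calO_X)$ is a constant, pinned down by the normalization $\varphi'(1)=1$, forcing $\varphi' = \varphi_\sigma$.

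The main obstacle is the part (2) existence claim \emph{without} completeness, where $\varphi_\sigma(1)$ need not be globally constant and the preceding argument only yields $\varphi_\sigma(1)(x) = 1$. I would finish this by passing to a normal projective compactification of $X$ (extending $\sigma$, possibly as a meromorphic section with controlled behaviour at the boundary) and applying the complete case, or alternatively by rescaling $\varphi_\sigma$ by $\varphi_\sigma(1)^{-1}$ on the open locus where $\varphi_\sigma(1)$ is a unit, noting that rescaling by a unit preserves the near-splitting axioms and the compatibility with $\{\sigma=0\}$. A secondary bookkeeping task is to match the abstract Cartier trace to the combinatorial $\Tr$ operator of the paper, but this is standard.
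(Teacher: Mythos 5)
Your overall architecture — realize the near-splitting via the Cartier duality $\mathrm{Hom}(F_*\calO,\calO)\cong F_*\omega^{1-p}$, compute $\varphi_\sigma(1)$ in local coordinates at $x$ via the $\Tr$ operator, extend across the singular locus by normality, and pin down uniqueness via the Kumar--Mehta divisor-compatibility criterion — is exactly the paper's, and parts (1) and (2)-with-completeness are handled correctly. However, what you have identified as ``the main obstacle'' in part (2) is not actually an obstacle, and your two proposed repairs are both unsound: a normal compactification will generally destroy the hypothesis that $\sigma$ is a regular section of $\omega^{-1}$ on the regular locus (poles at the boundary ruin $\sigma^{p-1}\in H^0(\omega^{1-p})$), and rescaling by $\varphi_\sigma(1)^{-1}$ only produces a splitting of the open locus where $\varphi_\sigma(1)$ is a unit, not of all of $X$.

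The point you are missing is that the weight argument you already wrote down proves more than you claim: it shows $\Tr(f^{p-1})\equiv 1$ as a formal power series at $x$, not merely that its constant term is $1$. Indeed, the other contributions to $\Tr(f^{p-1})$ come from monomials of $f^{p-1}$ of the form $m^p\prod_i t_i^{p-1}$ with $m$ a nontrivial monomial. Such a monomial has $\lambda$-weight $p\,\lambda(m)+(p-1)\sum_i\lambda_i$, and since the weight vector $\lambda$ realizing your term order is nonnegative (or $\naturals[\varepsilon]$-valued), this weight is $\geq(p-1)\sum_i\lambda_i$, i.e.\ at least the weight of $\prod_i t_i^{p-1}$. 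But you showed $\prod_i t_i^{p-1}$ is the \emph{strict} $\lambda$-maximum among the monomials of $f^{p-1}$, so no such $m^p\prod_i t_i^{p-1}$ occurs. Hence every monomial of $f^{p-1}$ other than $\prod_i t_i^{p-1}$ is killed by $\Tr$, and $\Tr(f^{p-1})=1$ identically on a formal neighborhood of $x$. Since $X$ is an irreducible variety, $\Tr(\sigma^{p-1})-1$ is a regular function on $X_{reg}$ vanishing on a formal neighborhood, hence $\Tr(\sigma^{p-1})\equiv 1$; normality then extends the splitting to all of $X$. This is precisely how the paper handles part (2) without completeness: the hypothesis that $\prod_i t_i$ is an \emph{initial} term (for a term order) is quantitatively stronger than being a \emph{lowest-degree} term (as in part (1), where one only gets $\Tr(f^{p-1})=1+\text{higher order}$ and completeness is essential to kill the tail).
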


In proposition \ref{prop:multfreesplit}
we give an application of this to Brion's ``multiplicity-free subvarieties
of $G/B$'': if $X$ is a multiplicity-free {\em divisor}, then $G/B$ possesses
a Frobenius splitting compatibly splitting $X$. 

\subsection{Application to Gr\"obner bases}

In a finite poset $P$, call an element $p$ \defn{basic} if $p$ is {\em not}
the unique greatest lower bound of $\{q \in P : q > p\}$.
It is then trivial to prove (\cite{biGrassmannian,GeckKim}, where they also
determine the basic elements in Bruhat orders) that any $p$
{\em is} the greatest lower bound of $\{q \in P : q \geq p, q$ basic$\}$. 

\begin{Theorem}\label{thm:Grobner}
  Let $f \in \integers[x_1,\ldots,x_n]$ be a degree $n$ polynomial
  with $\init f = \prod_{i=1}^n x_i$.
  Let $\YY_f$ be a poset of compatibly split subvarieties
  with respect to the splitting $\Tr(f^{p-1}\bullet)$ (here $p$ varies
  over some infinite set of primes), ordered by inclusion. 
  (It need not be all of them.) Then over the rationals:

  \begin{enumerate}
  \item Any $Y\in \YY_f$ has a Gr\"obner basis $(g_i)$ over whose 
    initial terms $(\init g_i)$ are squarefree monomials.
  \item Any $Y\in \YY_f$ is the scheme-theoretic intersection of
    $\{Z \in \YY_f : Z \geq Y, Z$ basic in $\YY_f\}$, and of course
    it suffices to use only the minimal elements.
  \item If we concatenate Gr\"obner bases of the minimal
    $\{Z \in \YY_f : Z \geq Y, Z$ basic in $\YY_f\}$, 
    we get a Gr\"obner basis of $Y$.
  \end{enumerate}

  Indeed (2) holds for $\YY$ a set of compatibly split subvarieties
  in any split scheme, and (3) holds whenever $\Tr((\init f)^{p-1}\bullet)$
  is a splitting.
\end{Theorem}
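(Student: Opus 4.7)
The plan is to dispatch (1) from theorem \ref{thm:initprod}, derive (2) from the poset trivium quoted just before the theorem combined with the basic Frobenius-splitting theorem, and attack (3) by identifying the initial ideal of the sum with the sum of the initial ideals. I expect the main obstacle to lie in this last identity, since equality of $\init$ with a sum of $\init$'s fails in general; the leverage here is that $\Tr((\init f)^{p-1}\bullet)$ is itself a splitting, forcing both sides to be radical, together with the order-preserving surjection $\pi_{f,\init}$ of theorem \ref{thm:frobdegen}(3).

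For (1), theorem \ref{thm:initprod} says that $\init Y$ is a Stanley-Reisner scheme, so $\init(I(Y))$ is generated by squarefree monomials $m_1,\ldots,m_r$. By the very definition of $\init(I(Y))$, each $m_i$ equals $\init(g_i)$ for some $g_i \in I(Y)$, and $\{g_i\}$ is then a Gr\"obner basis of $I(Y)$ with squarefree initial terms. For (2), apply the poset fact to $\YY_f$: every $Y\in\YY_f$ is the greatest lower bound in $\YY_f$ of the (finitely many) minimal basic elements $Z_1,\ldots,Z_r$ that contain it. The ideal $\sum_i I(Z_i)$ is compatibly split, hence radical, and its zero set is $\bigcap_i Z_i$; the glb property identifies this with $Y$, so $I(Y) = \sum_i I(Z_i)$ and $Y$ is the scheme-theoretic intersection of the $Z_i$. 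This step used only finiteness plus the sum/intersection closure of compatibly split ideals, so it generalizes to an arbitrary split scheme.

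For (3), fix Gr\"obner bases $G_i$ of each $I(Z_i)$, so that $\{\init(g) : g \in G_i\}$ generates $\init(I(Z_i))$; it suffices to show $\init(I(Y)) = \sum_i \init(I(Z_i))$, since then $\bigsqcup_i G_i$ is a Gr\"obner basis of $I(Y)$. The inclusion $\supseteq$ is immediate. For $\subseteq$, theorem \ref{thm:frobdegen}(2) makes each $\init(I(Z_i))$ and $\init(I(Y))$ compatibly split by $\Tr((\init f)^{p-1}\bullet)$, as is then $\sum_i \init(I(Z_i))$; both ideals are consequently radical, so it suffices to verify $\bigcap_i \init Z_i \subseteq \init Y$ on the level of sets. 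Let $W$ be any irreducible component of $\bigcap_i \init Z_i$; then $W \in \YY_{\init f}$. By theorem \ref{thm:frobdegen}(3), $\pi_{f,\init}(W)$ is the unique minimal element of $\YY_f$ whose $\init$ contains $W$; since $\init Z_i \supseteq W$ for every $i$, this minimality forces $Z_i \supseteq \pi_{f,\init}(W)$ for all $i$, whence the glb characterization from (2) gives $\pi_{f,\init}(W) \subseteq Y$. Hence $W \subseteq \init(\pi_{f,\init}(W)) \subseteq \init Y$, completing the proof.
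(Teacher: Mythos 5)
Your proofs of parts (1) and (2) match the paper's essentially word for word. For part (3) you take a genuinely different route. The paper invokes Corollary~\ref{cor:Grobner}, which reduces the Gr\"obner-basis-concatenation claim to a geometric statement about closures of intersections in a flat family over a DVR, and proves that statement via the integrality/gluing argument of Lemma~\ref{lem:Grobner}. You instead observe that both $\sum_i \init I(Z_i)$ and $\init I(Y)$ are compatibly split by $\Tr((\init f)^{p-1}\bullet)$ and hence radical, which reduces the wanted equality of ideals to the set-theoretic inclusion $\bigcap_i \init Z_i \subseteq \init Y$, and then you verify this inclusion component-by-component using the order-preserving surjection $\pi_{f,\init}$ of theorem~\ref{thm:frobdegen}(3). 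This is correct, and it has the appeal of bypassing Lemma~\ref{lem:Grobner} entirely, at the price of leaning on the heavier structure theorem~\ref{thm:frobdegen}(3). Your argument, like the paper's claim ``(3) holds whenever $\Tr((\init f)^{p-1}\bullet)$ is a splitting,'' works in that level of generality.

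One small gap worth flagging: theorem~\ref{thm:frobdegen}(3) is a statement over $\FF_p$, so your argument directly establishes $\init I(Y) = \sum_i \init I(Z_i)$ only mod~$p$ for all sufficiently large $p$. To conclude the equality over $\QQ$ (as the theorem claims), you need the standard reduction-mod-$p$ step — that the formation of $\init$ commutes with reduction mod $p$ away from a finite set of primes, and that two $\QQ$-ideals, one contained in the other, agreeing mod infinitely many $p$ must be equal — exactly as deployed in the proof of theorem~\ref{thm:initprod}. The paper's route via Corollary~\ref{cor:Grobner} avoids this because that corollary is stated over an arbitrary field and can be applied directly with $\FF = \QQ$ once theorem~\ref{thm:initprod} furnishes the radicality hypothesis there.
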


As any single polynomial forms a Gr\"obner basis, we see that
a concatenation of Gr\"obner bases is usually {\em not} a Gr\"obner basis.
The special geometry of our situation is explained in lemma \ref{lem:Grobner}.
In section \ref{ssec:matrixSchubs} we use theorem \ref{thm:Grobner} to
recover the main results of \cite{Fulton92,KM}.

\subsection{Acknowledgements}
We thank Michel Brion, Brian Conrad, Nick Katz, Michael Larsen, Jiang-Hua Lu,
Gregg Musiker, Karl Schwede, David Speyer, Mike Stillman, Bernd Sturmfels, 
and Terry Tao for useful conversations, some of which occurred at the 2009 MSRI
Program in Algebraic Geometry.  Many of the calculations were done
with Macaulay 2 \cite{M2}.

\section{Near-splittings of affine space and its ideals}
\label{sec:nearsplittings}

Following \cite[Section 1.3.1]{BK}, we describe 
all the near-splittings on $\FF[x_1,\ldots,x_n]$, where $\FF$ is
a perfect field over $\FF_p$. (In \cite{BK} they assume in general 
that $\FF$ is algebraically closed, but make no use of this in that section.)

\junk{
\begin{Lemma}\label{lem:checknearsplitting}
  Let $S$ contain a perfect field $\FF$ over $\FF_p$, and $\varphi:S \to S$ be 
  an additive map such that $\varphi(r^p b) = r\, \varphi(b)$ for all $r\in\FF$.
  Then to check whether $\varphi$ is a near-splitting of $S$, 
  it is enough to check whether $\varphi(a^p b) = a\, \varphi(b)$ 
  for $a,b$ in a spanning set of the $\FF$-module $S$.
\end{Lemma}
\begin{proof}
  If $a = \sum_i r_i s_i, b = \sum_j r'_j s_j$ for $\{s_i\}$ the spanning set
  and $\{r_i\}$ the $\FF$-coefficients, then
  $$ \varphi\left(a^p b\right)
  = \varphi\left(\left(\sum_i r_i s_i\right)^p \sum_j r'_j s_j\right)
  = \varphi\left(\sum_i r_i^p s_i^p \sum_j r'_j s_j\right)
  = \sum_{i,j} \varphi\left(r_i^p s_i^p r'_j s_j\right)
  $$
  $$
  = \sum_{i,j} r_i \sqrt[p]{r'_j} \ \varphi\left(s_i^p s_j\right)
  = \sum_{i,j} r_i \sqrt[p]{r'_j}\ s_i\ \varphi \left(s_j\right)
  = \sum_{i,j} r_i  s_i \ \varphi\left(r'_j s_j\right)
  = \sum_i r_i s_i \ \varphi\left(\sum_j r'_j s_j\right)
  = a \varphi(b). $$  
\end{proof}
}

\begin{Proposition}\label{prop:nearsplittings}\cite[section 1.3.1]{BK}
  Let $\FF$ be a perfect field over $\FF_p$. Then there exists a unique
  near-splitting $\Tr(\bullet)$ on $\FF[x_1,\ldots,x_n]$ such that
  for each monomial $m = \prod_i x_i^{e_i}$, 
  $$ \Tr(m) = 
  \begin{cases}
    \sqrt[p]{m \prod_i x_i} \bigg/ \prod_i x_i 
    & \text{if $m \prod_i x_i$ is a $p$th power} \\
    0 & \text{otherwise.}
  \end{cases} $$
  For each $f\in \FF[x_1,\ldots,x_n]$, 
  the map $\Tr(f\bullet) : g \mapsto \Tr(fg)$ is a near-splitting,
  and the association $f \mapsto \Tr(f\bullet)$ is a bijection 
  from $\FF[x_1,\ldots,x_n]$ to the set of near-splittings.
\end{Proposition}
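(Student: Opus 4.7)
The plan is to exploit perfectness of $\FF$ in two ways: (i) $R:=\FF[x_1,\ldots,x_n]$ is a free module of rank $p^n$ over its subring $R^p=\FF[x_1^p,\ldots,x_n^p]$, with basis $B:=\{x^e : 0\le e_i<p\}$; and (ii) every element of $R^p$ has a unique $p$-th root in $R$. I will use (i) for uniqueness/determination statements and (ii) to produce preimages under $f\mapsto \Tr(f\bullet)$.

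First I would establish existence of $\Tr$ by defining it on monomials via the stated formula and extending $\FF_p$-additively. To verify it is a near-splitting, by additivity and the Frobenius freshman dream it suffices to check $\Tr(m_1^p m_2)=m_1\,\Tr(m_2)$ for monomials $m_1,m_2$ and $\Tr(c^p g)=c\,\Tr(g)$ for $c\in\FF$. Both follow at once from the formula: $m_1^p m_2\prod_i x_i=m_1^p(m_2\prod_i x_i)$ is a $p$-th power iff $m_2\prod_i x_i$ is, and then the $p$-th roots match up. Uniqueness of $\Tr$ holds because any near-splitting is determined by its values on the $R^p$-basis $B$, and the formula fixes these.

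That $\Tr(f\bullet)$ is a near-splitting for any $f$ is immediate from the properties of $\Tr$: $\Tr(f\cdot a^p b)=\Tr(a^p(fb))=a\,\Tr(fb)$. The key input for bijectivity is the calculation
$$ \Tr\bigl(x^{(p-1,\ldots,p-1)-e}\cdot x^{e'}\bigr)=\delta_{e,e'} \qquad (e,e'\in B): $$
multiplying by $\prod_i x_i$ gives exponents $p-e_i+e'_i\in[1,2p-1]$, so $p$-divisibility coordinatewise forces $e=e'$, in which case the quotient is $1$. Combined with the near-splitting property, this implies that for the expansion $f=\sum_{e\in B} a_e^p\, x^{(p-1,\ldots,p-1)-e}$ in the reindexed $R^p$-basis---where each $a_e\in R$ is the unique $p$-th root guaranteed by (ii)---one has $\Tr(f\bullet)(x^{e'})=a_{e'}$.

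Given any near-splitting $\varphi$, setting $f:=\sum_{e\in B}\varphi(x^e)^p\, x^{(p-1,\ldots,p-1)-e}$ produces an $f$ with $\Tr(f\bullet)$ and $\varphi$ agreeing on $B$, hence everywhere by (i); uniqueness of $f$ is the same calculation run in reverse. The main obstacle is really just the bookkeeping of juggling the two $R^p$-bases of $R$ paired by $\Tr$ (the monomials $\{x^e\}_{e\in B}$ and their ``complements'' $\{x^{(p-1,\ldots,p-1)-e}\}_{e\in B}$); conceptually, perfectness of $\FF$ is doing essentially all of the work.
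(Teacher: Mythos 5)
Your proof is correct and follows essentially the same route as \cite[section 1.3.1]{BK}, which the paper cites in lieu of giving its own proof: uniqueness and existence from the free $R^p$-basis $\{x^e : 0\le e_i < p\}$ plus perfectness of $\FF$, and bijectivity via the inverse $\varphi\mapsto \sum_e \varphi(x^e)^p\,x^{(p-1,\ldots,p-1)-e}$. Your Kronecker-delta computation $\Tr\bigl(x^{(p-1,\ldots,p-1)-e}\cdot x^{e'}\bigr)=\delta_{e,e'}$ packages the verification of that inverse more compactly; the one loose spot is the phrase ``extending $\FF_p$-additively,'' which by itself does not pin down $\Tr$ on $\FF$-multiples of monomials when $\FF\neq\FF_p$ --- you also need to extend $p^{-1}$-semilinearly over $\FF$ (i.e.\ decree $\Tr(cm)=\sqrt[p]{c}\,\Tr(m)$), which perfectness allows and your later check $\Tr(c^pg)=c\Tr(g)$ shows you intend.
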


\junk{
\begin{proof}
  We copy this from \cite[section 1.3.1]{BK} for the convenience of
  the reader, the only (wholly inconsequential) modification being to
  have a perfect base ring rather than an algebraically closed base field.

  The two axioms on near-splittings, and the perfection of the base ring, 
  tell us that a near-splitting is determined by its values on
  monomials. That gives the uniqueness. That $\Tr(\bullet)$ is a
  near-splitting follows from 
  $$ \Tr(m_1^p m_2) = 
  \begin{cases}
    \sqrt[p]{m_1^p m_2 \prod_i x_i} / \prod_i x_i 
    = m_1 \sqrt[p]{m_2 \prod_i x_i} / \prod_i x_i 
    & \text{if $m_2 \prod_i x_i$ is a $p$th power} \\
    0 & \text{otherwise,}
  \end{cases} $$
  each of which are $m_1 \Tr(m_2)$,
  and from lemma \ref{lem:checknearsplitting} applied to the set of monomials.

  It is trivial to see that $\varphi(f\bullet)$ is a near-splitting
  whenever $\varphi(\bullet)$ is.  \junk{To see that $f \mapsto \Tr(f\bullet)$
  is bijective from $\FF[x_1,\ldots,x_n]$ to near-splittings, let $\varphi$
  be a near-splitting, and define
  $$ f_\varphi = \sum_{\vec e:\ e_i\leq p-1\ \forall i} 
  \varphi\left(\prod_i x_i^{e_i}\right)^p \prod_i x_i^{(p-1)-e_i}. $$
  Then for any monomial $\prod_j x_j^{f_j}$ with $f_j\leq p-1\ \forall j$, 
  $$ \Tr\left(f_\varphi \prod_j x_j^{f_j}\right) 
  = 
  \Tr\left( \sum_{\vec e:\ e_i\leq p-1\ \forall i} 
  \varphi\left(\prod_i x_i^{e_i}\right)^p \prod_i x_i^{(p-1)-e_i} 
  \prod_j x_j^{f_j} \right)
$$ $$
  =   \sum_{\vec e:\ e_i\leq p-1\ \forall i} 
  \varphi\left(\prod_i x_i^{e_i}\right)
  \Tr\left(     \prod_i x_i^{(p-1)-e_i + f_i}  \right)
  =   \sum_{\vec e:\ e_i\leq p-1\ \forall i} 
  \varphi\left(\prod_i x_i^{e_i}\right)
  \delta_{\vec e,\vec f}
  =   \varphi\left(\prod_i x_i^{f_i}\right) $$
  and 
  \begin{align*}
    f_{\Tr\left(c\prod_i x_i^{f_i} \bullet\right)} 
    &= \sum_{\vec e:\ e_i\leq p-1\ \forall i} 
    \Tr\left(c\prod_i x_i^{f_i}
      \prod_i x_i^{e_i}\right)^p \prod_i x_i^{(p-1)-e_i} \\
    &= \sum_{\vec e:\ e_i\leq p-1\ \forall i} 
    c\ \prod_i x_i^{(p-1)-e_i}\
    \Tr\left(\prod_i x_i^{e_i+f_i}\right)^p  \\
    &= \sum_{\vec e:\ e_i\leq p-1\ \forall i} 
    c\ \prod_i x_i^{(p-1)-e_i}\
    \left(
      \begin{cases}
        \sqrt[p]{\prod_i x_i^{e_i+f_i+1}}/\prod_i x_i 
        &\text{if $\prod_i x_i^{e_i+f_i+1}$ is a $p$th power} \\
        0 &\text{if not}
      \end{cases}
    \right)^p \\
    &= \sum_{\vec e:\ e_i\leq p-1\ \forall i} 
    c\ \prod_i x_i^{(p-1)-e_i}\
    \begin{cases}
      \prod_i x_i^{e_i+f_i+1-p}
      &\text{if $p | (e_i+f_i+1)$ for all i} \\
      0 &\text{if not}
    \end{cases} \\
    &= \sum_{\vec e:\ e_i\leq p-1\ \forall i} 
    c\ 
    \begin{cases}
      \prod_i x_i^{f_i}
      &\text{if $p | (e_i+f_i+1)$ for all i} \\
      0 &\text{if not}
    \end{cases} \\
    &= 
    c\ \prod_i x_i^{f_i}
  \end{align*}
  so $\varphi = \Tr(f_\varphi\bullet)$ and $f_{\Tr(g\bullet)} = g$.}
\end{proof}

In fact the association $f \mapsto \Tr(f\bullet)$ is a bijection 
from $\FF[x_1,\ldots,x_n]$ to the set of near-splittings, whose inverse is
$$ \varphi \quad\mapsto\quad 
f_\varphi := \sum_{\vec e:\ e_i\leq p-1\ \forall i} 
  \varphi\left(\prod_i x_i^{e_i}\right)^p \prod_i x_i^{(p-1)-e_i}, $$
but we will not need this fact.

Call an ideal $I \leq S$ of a ring with a (near-)splitting $\varphi$
\defn{compatibly (near-)split} if $\varphi(I) \subseteq I$.
}

Hereafter we will assume that $\FF$ is a perfect field over $\FF_p$,
and the near-splittings we will consider will all be of the form
$c \Tr(f^{p-1}\bullet)$ for some $f \in \FF[x_1,\ldots,x_n]$ and $c\in \FF$.

\begin{Lemma}\label{lem:splitsf} 
  Let $f \in \FF[x_1,\ldots,x_n]$. 
  \begin{enumerate}
  \item If $\Tr(f^{p-1})$ is a unit, and $c$ is its inverse,
    then $c\, \Tr(f^{p-1})$ is a splitting of $\FF[x_1,\ldots,x_n]$.
  \item 
    The principal ideal
    $\langle f\rangle$ is compatibly near-split by $\Tr(f^{p-1}\bullet)$,
    and compatibly split by $c\, \Tr(f^{p-1}\bullet)$ from part (1) 
    if $c\, \Tr(f^{p-1}) = 1$.
  \end{enumerate}
\end{Lemma}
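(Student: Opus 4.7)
The plan for part (1) is to verify the three defining axioms of a Frobenius splitting for $\varphi := c\,\Tr(f^{p-1}\bullet)$ one by one, leaning on Proposition \ref{prop:nearsplittings} for the underlying properties of $\Tr$. For part (2) I would directly compute the image of an arbitrary element of $\langle f\rangle$ under the map.

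For (1): additivity of $\varphi$ is immediate from the additivity of $\Tr$ (which is part of being a near-splitting) together with the $\FF$-linearity of multiplication by $c$ and by $f^{p-1}$. The $p$-linearity axiom $\varphi(a^p b) = a\,\varphi(b)$ falls out of the near-splitting property of $\Tr$:
$$ \varphi(a^p b) \;=\; c\,\Tr\bigl(a^p (f^{p-1}b)\bigr) \;=\; c\,a\,\Tr(f^{p-1}b) \;=\; a\,\varphi(b). $$
Finally, $\varphi(1) = c\,\Tr(f^{p-1}) = 1$ is exactly the defining condition on $c$. Note that since a unit in $\FF[x_1,\ldots,x_n]$ is forced to lie in $\FF$, the scalar $c$ is genuinely an element of $\FF$ and there is no subtlety in commuting it past $\Tr$.

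For (2): for any $h = fg \in \langle f\rangle$, a direct computation gives
$$ \Tr(f^{p-1}\,h) \;=\; \Tr(f^p g) \;=\; f\,\Tr(g) \;\in\; \langle f\rangle, $$
again by the near-splitting property, so $\Tr(f^{p-1}\bullet)$ carries $\langle f\rangle$ into itself. Multiplying this map by the scalar $c$ preserves the containment, so $\langle f\rangle$ is also compatibly split by the splitting produced in (1).

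I do not anticipate any real obstacle here: both statements follow by direct manipulation from the two near-splitting axioms of Proposition \ref{prop:nearsplittings}. The only thing to keep straight is the bookkeeping of exponents, namely the collapse $f^{p-1}\cdot f = f^p$ that lets the near-splitting rule extract the factor $f$ through $\Tr$.
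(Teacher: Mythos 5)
Your proof is correct and follows essentially the same route as the paper's. The only cosmetic difference is in part (1): the paper gets near-splitting for free by rewriting $c\,\Tr(f^{p-1}\bullet)=\Tr(c^p f^{p-1}\bullet)$ and invoking Proposition \ref{prop:nearsplittings}, whereas you re-verify the two near-splitting axioms directly — both amount to the same short computation, and your remark that a unit in $\FF[x_1,\ldots,x_n]$ must lie in $\FF$ is a useful sanity check.
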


\begin{proof}
  \begin{enumerate}
  \item $c\, \Tr(f^{p-1} \bullet) = \Tr(c^p f^{p-1} \bullet)$, 
    hence is a near-splitting, and the remaining condition
    that $c\, \Tr(f^{p-1} \bullet) = 1$ is how we chose $c$.
  \item 
    If $r f \in \langle f\rangle$, then
    $\Tr(f^{p-1} rf) = f\, \Tr(r) \in \langle f\rangle$; likewise
    $c\, \Tr(f^{p-1} rf) = f c\, \Tr(r) \in \langle f\rangle$.
  \end{enumerate}
\end{proof}

\junk{
We mention a characterization of compatible splitting
in terms of an ideal's generators.

\begin{Proposition}\label{prop:compsplit}
  Let $\Tr(f\bullet)$ be a near-splitting on $\FF[x_1,\ldots,x_n]$, and 
  $I = \langle g_1,\ldots,g_m\rangle$ an ideal. 
  Then $I$ is compatibly near-split by $\Tr(f\bullet)$ iff for each $i$,
  $f g_i \in \langle g_1^p,\ldots,g_m^p \rangle$, i.e. if $f$
  lies in the colon ideal 
  $\langle g_1^p,\ldots,g_m^p \rangle :  \langle g_1,\ldots,g_m \rangle$.
\end{Proposition}

\begin{proof}
  $I$ is near-split if $\forall \sum_i r_i g_i \in I$, one has
  $\Tr(f \sum_i r_i g_i) \in I$. Since $\Tr$ is additive, 
  it is enough to check that $\Tr(r g_i) \in I$ for all $r\in R, i=1,\ldots,m$.
  Then one can further break up $r$ into monomials, and pull out
  $p$th powers, so it is enough to check for $r = \prod_i x_i^{e_i}$
  with $0\leq e_i < p$.

  Compatible near-splitting is thus equivalent to the existence of functions 
  $f_{ij} : \{\prod_i x_i^{e_i} : 0\leq e_i < p\} \to \FF[x_1,\ldots,x_n]$
  such that 
  $$ \Tr(f r g_i) = \sum_j f_{ij}(r) g_j $$
  for all $i=1,\ldots,m$ and $r\in \{\prod_i x_i^{e_i} : 0\leq e_i < p\}$. 
  These $f_{ij}$ then extend uniquely to near-splittings, with the same
  equation holding now for all $r \in \FF[x_1,\ldots,x_n]$. \junk{Conversely,
  if we have near-splittings $\{f_{ij}\}$ satisfying that equation,
  we can restrict them to $r\in \{\prod_i x_i^{e_i} : 0\leq e_i < p\}$.}

  Any near-splitting $f_{ij}$ is of the form $\Tr(f'_{ij}\bullet)$
  for some $f'_{ij} \in \FF[x_1,\ldots,x_n]$. Rewriting, 
  $$ \Tr(f r g_i) = \sum_j f_{ij}(r) g_j 
  = \sum_j \Tr(f'_{ij} r) g_j
  = \Tr\left( \sum_j f'_{ij} r g_j^p\right) $$
  hence $f g_i = \sum_j f'_{ij} g_j^p$.
  Thus the $\{f'_{ij}\}$ exist iff $fg_i \in 
  \langle g_1^p,\ldots,g_m^p \rangle$.
\end{proof}

For example, a principal ideal $\langle g\rangle$ is compatibly
$\Tr(f\bullet)$-split iff $g^{p-1} | f$. 
}

We were tempted to generalize the definition of splitting by allowing
$\phi(1)$ to be a unit rather than actually $1$. This would make some
theorems nicer to state, but did not seem worth the confusion to 
people familiar with the usual definition.

We will later be interested in near-splittings $\Tr(f \bullet)$
on $R[x_1,\ldots,x_n]$ where $R$ is a certain \defn{perfect ring} over $\FF_p$,
meaning that the Frobenius map $R\to R$ is bijective. (We won't need to 
generalize the results of this section, though their proofs from 
\cite[section 1.3.1]{BK} go through without change.)
It is easy to show that such rings are Noetherian only when they are fields.

\junk{
of the base ring is bijective.\footnote{%
  Unfortunately such rings are non-Noetherian, when they are not fields, 
  as we now show. First replace $R$ by its quotient by 
  a prime component of $0$; the result will again be split.
  Let $b \in R$, $b\neq 0$, and consider the
  increasing chain of principal ideals 
  $\langle b \rangle \leq \langle \varphi(b) \rangle 
  \leq \langle \varphi(\varphi(b)) \rangle \ldots$.
  If $R$ is Noetherian making this terminate, then for some 
  $c = \varphi(\varphi(\cdots(b)\cdots)$, one has $\varphi(c) = rc$, 
  so $c = r^p c^p$. Since we are in a domain, $1 = (r^p c^{p-2}) c$,
  so $c$ and hence $b$ are units.} 
This generalizes the more usual notion of perfect field;
one non-field example is the ring $\FF_p[t^r : r\in\rationals, r > 0]$.
}

\section{Proof of theorem \ref{thm:Trpointcount}}

\begin{Lemma}\label{lem:degbound}
  Let $f \in \FF[x_1,\ldots,x_n]$ be of degree at most $n$ over a perfect
  field $\FF$. Then $\Tr(f^{p-1})$ 
  is the $p$th root of the coefficient on $\prod_i x_i^{p-1}$ in $f^{p-1}$.
\end{Lemma}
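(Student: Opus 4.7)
The plan is to exploit the sparse support of $\Tr(\bullet)$: of the monomials in $f^{p-1}$, only $\prod_i x_i^{p-1}$ can contribute, and the degree hypothesis is exactly what makes this work.

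First I would expand $f^{p-1} = \sum_{\vec e} c_{\vec e} \prod_i x_i^{e_i}$ and use additivity of $\Tr$ together with the $p^{-1}$-linearity $\Tr(r^p b) = r\, \Tr(b)$ (valid because $\FF$ is perfect, so every coefficient $c_{\vec e}$ is uniquely a $p$th power): this gives
$$\Tr(f^{p-1}) = \sum_{\vec e} \sqrt[p]{c_{\vec e}}\ \Tr\!\left(\prod_i x_i^{e_i}\right).$$
Next I would invoke the definition of $\Tr$ on monomials: $\Tr\!\left(\prod_i x_i^{e_i}\right)$ vanishes unless every $e_i+1$ is divisible by $p$, i.e., unless $e_i \equiv p-1 \pmod p$. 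In particular each surviving $e_i$ is at least $p-1$, so $\sum_i e_i \geq n(p-1)$.

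The key step is the degree bound. Since $\deg f \leq n$, we have $\deg f^{p-1} \leq n(p-1)$, so every monomial of $f^{p-1}$ satisfies $\sum_i e_i \leq n(p-1)$. Combined with the previous inequality, any surviving term forces $e_i = p-1$ for all $i$. Thus only the single monomial $\prod_i x_i^{p-1}$ contributes, and a direct computation gives
$$\Tr\!\left(\prod_i x_i^{p-1}\right) = \sqrt[p]{\prod_i x_i^{p}}\bigg/\prod_i x_i = 1.$$
Therefore $\Tr(f^{p-1}) = \sqrt[p]{c}$, where $c = c_{(p-1,\ldots,p-1)}$ is the coefficient of $\prod_i x_i^{p-1}$ in $f^{p-1}$.

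There is no real obstacle; the only subtlety worth flagging is that the $p$th root appears because $\Tr$ is $p^{-1}$-linear rather than $\FF$-linear, so extracting the scalar $c_{\vec e}$ from $\Tr(c_{\vec e}\, m)$ produces $\sqrt[p]{c_{\vec e}}$. The degree hypothesis $\deg f \leq n$ is used in exactly one place, to pin the support of the nonzero contribution down to the single "diagonal" monomial.
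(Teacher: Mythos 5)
Your proof is correct and follows essentially the same route as the paper's: identify that $\Tr$ kills all monomials except those with every exponent $\equiv -1 \bmod p$, then use the degree bound $\deg f^{p-1} \leq n(p-1)$ to force the unique surviving monomial to be $\prod_i x_i^{p-1}$. Yours just spells out the $p^{-1}$-linearity bookkeeping (producing the $p$th root of the coefficient) more explicitly than the paper does.
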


\begin{proof}
  The only monomials noticed by $\Tr(\bullet)$ 
  are of the form $m^p \prod_i x_i^{p-1}$, where $m$ is itself a monomial,
  and so have degree $p \deg m + (p-1) n$.
  By the assumption on $\deg f$, its power $f^{p-1}$ can't contain 
  such a monomial other than the one for $m=1$.
\end{proof}

For $n=3$, the following is a standard argument from the theory
of supersingular elliptic curves, and was studied for hypersurfaces
in \cite[example 2.3.7.17]{ellipticCurves}.

\junk{
\begin{Theorem}{thm:pointcount}
  Let $f \in \integers[x_1,\ldots,x_n]$, $n>0$, with $f$ of degree at most $n$.
  Then the number of $\vec v \in \FF_p^n$ satisfying $f(\vec v)=0$
  is congruent to $(-1)^{n-1} \Tr(f^{p-1})$ mod $p$. 

  In particular, if the number of $\vec v \in \FF_p^n$ with $f(\vec v)=0$
  is not a multiple of $p$, then some multiple of $\Tr(f^{p-1})$
  defines a splitting on affine space with respect to which 
  $\langle f \rangle$ is compatibly split.
\end{Theorem}
}

\begin{proof}[Proof of theorem \ref{thm:Trpointcount}, when $f$ doesn't factor.]
  First observe that for any $a \in \FF_p$, 
  $$ 1 - a^{p-1} \equiv
  \begin{cases}
    1 &\text{if $a=0$} \\
    0 &\text{if $a\neq 0$}
  \end{cases} $$
  by Fermat's Little Theorem. 
  If we let $f^{p-1} = \sum_{\vec e} c_e \prod_{i=1}^n x_i^{e_i}$, then
  $$
  \# \left\{\vec v \in \FF_p^n : f(\vec v)=0 \right\}
  = \sum_{\vec v \in \FF_p^n} (1 - f(\vec v)^{p-1}) 
  = p^n - \sum_{\vec v \in \FF_p^n} f(\vec v)^{p-1} 
  \equiv - \sum_{\vec v \in \FF_p^n} f(\vec v)^{p-1} \bmod p
  $$
  $$
  = -\sum_{\vec v\in\FF_p^n} \sum_{\vec e} c_e \prod_{i=1}^n v_i^{e_i}
  = - \sum_{\vec e} c_e \sum_{\vec v\in\FF_p^n} \prod_{i=1}^n v_i^{e_i}
  = - \sum_{\vec e} c_e \prod_{i=1}^n \sum_{v_i \in \FF_p} v_i^{e_i}.
  $$
  Now consider the sum $\sum_{v \in \FF_p} v^e$. If $e=0$, this is 
  $p\cdot 1 \equiv 0$. 
  If $(p-1)|e$ and $e>0$, this is $0+(p-1)\cdot 1 \equiv -1$.
  Otherwise let $b$ be a generator of $\FF_p^\times$ so $b^e\neq 1$, 
  and observe that $\sum_{v \in \FF_p} (bv)^e = b^e \sum_{v \in \FF_p} v^e$ 
  is just a rearrangement of $\sum_{v \in \FF_p} v^e$, 
  so $\sum_{v \in \FF_p} v^e = 0$.  Omitting zero terms from the sum, we have
  $$
  \# \left\{\vec v \in \FF_p^n : f(\vec v)=0 \right\}
  \equiv - \sum_{\vec e:\ \forall i, e_i>0, (p-1)|e_i} c_e 
  \prod_{i=1}^n (-1) $$
  At this point the only terms entering have each $e_i \geq p-1$,
  so $\sum_i e_i \geq (p-1)n$. But that is $\geq \deg (f^{p-1})$.
  So (as in the proof of lemma \ref{lem:degbound})
  the only $\vec e$ has $e_i = p-1\ \forall i$.
  $$
  \# \left\{\vec v \in \FF_p^n : f(\vec v)=0 \right\}
  \equiv - (-1)^n c_{p-1,p-1,\ldots,p-1}. $$
  On the other hand,
  $$ \Tr(f^{p-1}) 
  = \Tr\left(\sum_{\vec e} c_e \prod_{i=1}^n x_i^{e_i}\right)
  = \sum_{\vec e} \Tr\left(c_e \prod_{i=1}^n x_i^{e_i}\right)
  = \sum_{\vec e} c_e \Tr\left(\prod_{i=1}^n x_i^{e_i}\right) $$
  where the last step uses the fact that each $c_e$ is in the prime field.
  The term $\Tr(\prod_{i=1}^n x_i^{e_i})$ is $0$ unless each $e_i\geq p-1$,
  so degree-counting as before, the only term that survives is
  $c_{p-1,p-1,\ldots,p-1}$. Combining, we get
  $\# \left\{\vec v \in \FF_p^n : f(\vec v)=0 \right\}
  \equiv (-1)^{n-1} \Tr(f^{p-1})$.
\end{proof}

One can see from the proof that if $\deg f < n$, then $\Tr(f^{p-1}) = 0$,
so $\#\{\vec v : f(\vec v)=0\}$ is a multiple of $p$.
This can be generalized (via a very similar proof) as follows:


\begin{Theorem*}[Chevalley-Warning]
  Let $\{f_i\}$ be a set of polynomials in $\FF[x_1,\ldots,x_n]$,
  $\FF$ a finite field of characteristic $p$, 
  such that $\deg (\prod_i f_i) < n$.
  Then $\# \{\vec v \in \FF^n : f_i(\vec v) = 0\ \forall i\}$
  is a multiple of $p$.
\end{Theorem*}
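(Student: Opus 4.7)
The plan is to imitate the proof of Theorem \ref{thm:Trpointcount} almost verbatim, with two modifications: replace $\FF_p$ by a general finite field $\FF$ of size $q = p^e$, and replace the single indicator $1-f^{q-1}$ by the product $\prod_i(1-f_i^{q-1})$, which picks out the common zero locus.

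First I would fix $q := |\FF|$ and note Fermat's Little Theorem on $\FF$: for $a \in \FF$ one has $a^{q-1} = 1$ if $a \neq 0$ and $0$ otherwise, so $1 - f_i(\vec v)^{q-1}$ is the indicator of $\{f_i(\vec v) = 0\}$. Therefore
$$ \#\{\vec v \in \FF^n : f_i(\vec v) = 0\ \forall i\} \;=\; \sum_{\vec v \in \FF^n} \prod_i \bigl(1 - f_i(\vec v)^{q-1}\bigr) \pmod p,$$
where I read the left side modulo $p$ inside $\FF$ (the counting identity is integer-valued, and its reduction mod $p$ sits inside $\FF$).

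Next I would expand the product and reduce the question to the classical lemma on $\FF$-sums of monomials: $\sum_{v\in\FF} v^e$ equals $q \cdot 1 \equiv 0$ if $e = 0$, equals $-1$ if $e > 0$ and $(q-1) \mid e$, and equals $0$ otherwise (by the generator-of-$\FF^\times$ rearrangement argument used in the proof of Theorem \ref{thm:Trpointcount}). Consequently, writing the expansion of the product as $\sum_{\vec e} c_{\vec e} \prod_i x_i^{e_i}$ and swapping the sums, only multi-indices $\vec e$ with each $e_i \geq q-1$ can contribute, and such $\vec e$ have $\sum_i e_i \geq (q-1)n$.

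The main (and only) step is then the degree bound: the polynomial $\prod_i(1 - f_i(\vec x)^{q-1})$ has degree at most $(q-1)\sum_i \deg f_i = (q-1)\deg(\prod_i f_i) < (q-1)n$ by hypothesis. Hence no $\vec e$ with $\sum_i e_i \geq (q-1)n$ appears with nonzero coefficient, so the whole sum collapses to $0$ in $\FF$, i.e.\ the cardinality is divisible by $p$. There is no real obstacle beyond keeping track of the degree inequality; this is the sole place where the hypothesis $\deg(\prod_i f_i) < n$ enters, exactly as the hypothesis $\deg f \leq n$ entered via Lemma \ref{lem:degbound} in the single-polynomial case.
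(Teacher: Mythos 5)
Your proof is correct, and it is precisely the argument the paper gestures at (the paper itself gives no proof, only the remark that Chevalley--Warning follows ``via a very similar proof'' to Theorem~\ref{thm:Trpointcount}). You have correctly generalized from $\FF_p$ to a finite field of order $q$, replaced the single indicator $1 - f^{p-1}$ by the product $\prod_i (1 - f_i^{q-1})$, and supplied the matching degree bound $(q-1)\sum_i \deg f_i < (q-1)n$, which is exactly where the hypothesis enters.
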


To have Frobenius splittings, we want $\Tr(f^{p-1}) \neq 0$, so we
will want our polynomial $f$ to have degree $n$. 
The Chevalley-Warning theorem is useful to us all the same, 
as in examples we will often want $f$ to factor.

\begin{proof}[Proof of the remainder of theorem \ref{thm:Trpointcount}:
  when $f$ factors.]
  Let $X_i = \{\vec v : f_i(\vec v) = 0\}$, so the left side is
  $    \#\left( \bigcup_i X_i \right)$.
  Inclusion-exclusion says
  $$ \# \left( \bigcup_i X_i \right)
  = \sum_{S \subseteq \{1,\ldots,m\}, S \neq \emptyset} 
  (-1)^{|S|-1} \# \left( \bigcap_{i\in S} X_i \right)
  = \sum_{S \neq \emptyset} (-1)^{|S|-1} \# \left\{\vec v \in \FF^n 
  : \forall i\in S, f_i(\vec v) = 0\right\}. $$
  For each proper subset $S \subsetneq \{1,\ldots,m\}$, 
  and by the assumption that the $\{f_i\}$ are nonconstant,
  the Chevalley-Warning theorem applies to 
  $\{\vec v \in \FF^n : \forall i\in S, f_i(\vec v) = 0\}$. So
  mod $p$, only the $S = \{1,\ldots,m\}$ term survives:

\hfill{$
    \#\left( \bigcup_i X_i \right)
    \equiv
       (-1)^{m-1} \# \{\vec v \in \FF^n  : \forall i, f_i(\vec v) = 0\}.$
}\hfill
\end{proof}

It is also easy to see from the proof of theorem \ref{thm:Trpointcount}
that there is no easy relation between point-counting and $\Tr(f^{p-1})$
if $\deg f > n$. The point count draws focus on exponents $e_i > 0$ with
$(p-1) | e_i$, whereas $\Tr(f^{p-1})$ is concerned with exponents $e_i$
with $e_i \equiv p-1 \bmod p$. These match up well only if degree 
considerations force $e_i = p-1$. 

Put another way, if $\deg f > n$ and $\Tr(f^{p-1})$ is a unit (which
can only happen if $f$ is inhomogeneous), then the splendid geometric
consequences of Frobenius splitting hold but are not detected by point-counting.

\section{Proofs of lemma 
  \ref{lem:Trinit} and theorem \ref{thm:frobdegen}}
\label{sec:Trinit}

\begin{proof}[Proof of theorem \ref{thm:frobdegen}, part (1)]
  If $\prod_i x_i$ is not in $f$'s Newton polytope, then 
  $\prod_i x_i^{p-1}$ is not in $f^{p-1}$'s Newton polytope. 
  Hence by lemma \ref{lem:degbound}, $\Tr(f^{p-1}) = 0$.

  \junk{
    For any polynomial $g$, write
    $g = \sum_m g_m$ where $g_m$ is the sum of $g$'s terms 
    $c \prod_i x_i^{e_i}$ having $\sum_i \lambda_i e_i = m$.
    Then $(f^{p-1})_m = \sum_{m_1,m}$
    
    Write $f = \sum_m f_m$ where $f_m$ is the sum of $f$'s terms 
    $c \prod_i x_i^{e_i}$ having $\sum_i \lambda_i e_i = m$.
    By definition, the maximum $m$ with $f_m \neq 0$ has $f_m = \init(f)$. 
  }
  Since $\prod_i x_i$ lies in $\init(f)$, we know
  $\prod_i x_i^{p-1}$ lies in $\init(f)^{p-1} = \init(f^{p-1})$.
  So the coefficient on $\prod_i x_i^{p-1}$ in $f^{p-1}$ is the
  same as its coefficient in $\init(f)^{p-1}$.
  Now apply lemma \ref{lem:degbound} to infer
  $\Tr(f^{p-1}) = \Tr(\init(f)^{p-1})$.
\end{proof}

\junk{

\begin{Theorem}{thm:criterion}
  Let $f \in \integers[x_1,\ldots,x_n]$ be of degree $n$.
  Assume that for some positive weighting of the variables, 
  the unique term of highest weight is $c \prod_i x_i$.
  Then for each $p\notdivides c$, the near-splitting $\Tr(f^{p-1})$ is a 
  Frobenius splitting, which compatibly splits the ideal $\langle f\rangle$.

  If $f$ factors as $\prod_j f_j$, then the condition is equivalent
  to the following: the highest-weight term $\init(f_j)$ of each $f_j$ 
  is unique and a squarefree monomial, and $gcd(\init(f_i),\init(f_j))=1$
  for $i\neq j$.  
\end{Theorem}

\begin{proof}
  If $f = c \prod_i x_i + R$ where each term in $R$ has strictly higher weight,
  then $f^{p-1} = c^{p-1} \prod_i x_i^{p-1} + R'$ where each term in $R'$
  is a product of $p-1$ terms of $f$, all of which have weight at least
  equal to and some strictly greater than that of $c \prod_i x_i$.
  Hence the coefficient of $\prod_i x_i^{p-1}$ in $f^{p-1}$ is $c^{p-1}$.

  Since $\deg f^{p-1} = (p-1)n$, no other term $\prod_i x_i^{pk_i + p-1}$ 
  can appear in $f^{p-1}$. Together, one sees that $\Tr(f^{p-1}) = c^{p-1}$.
  If $p\notdivides c$, this is $1$ by Fermat's Little Theorem.
  Apply lemma \ref{lem:splitsf}.
\end{proof}

There is a simple generalization, where $f$ has more than one term
of highest weight, and $f_0$ is the sum of those terms.
Then $\Tr(f^{p-1}) = \Tr(f_0^{p-1})$, and hence they both define
a splitting or neither does. We consider this situation 
further in the next section.
}


To study the degeneration, it will be convenient to introduce the 
perfect base ring $R = \FF_p[t^r : r\in\rationals_+]$ of Puiseux polynomials.
Define the ring endomorphism $h_\lambda$ of $R[x_1,\ldots,x_n]$ by
$$ h_\lambda(x_i) = x_i t^{\lambda_i}, \qquad \text{so} \qquad
(h_\lambda\cdot g)(x_1,\ldots,x_n) 
= g(x_1 t^{\lambda_1},\ldots,x_n t^{\lambda_n}).
$$
(where $h$ is for ``homogenize'').
Then $\init(g)$ is the part of $h_\lambda(g)$ with highest $t$-degree.%
\footnote{Perhaps it would be more natural to take 
  $x_i \mapsto x_i t^{-\lambda_i}$, and clear denominators by 
  multiplying by some $t^M$, so
  $\init(g)$ would be $(t^M h_\lambda(g))|_{t=0}$.
  It didn't seem to be worth keeping track of an extra sign, however.}
Note that $t$ is considered part of the base ring and {\em not} a
new variable, for purposes of defining $\Tr(\bullet)$ on
$R[x_1,\ldots,x_n]$; in particular $\Tr(f(t) g) = f(t^{1/p}) \Tr(g)$
for any $f(t)\in R$. 

\begin{proof}[Proof of lemma \ref{lem:Trinit}.]
  First we prove
  $$ \Tr(h_\lambda(g)) = h_\lambda(Tr(g)) t^{\frac{p-1}{p}\sum_i \lambda_i}$$
  for $g \in \FF_p[x_1,\ldots,x_n]$, i.e. when $g$ has no $t$-dependence.
  Both sides are additive, so it is enough to check 
  for $g = c \prod_i x_i^{e_i}, c\in \FF_p$. 
  If each $e_i \equiv - 1 \bmod p$, then
  \begin{align*}
    \Tr\left(h_\lambda(c \prod_i x_i^{e_i})\right)
  &= \Tr\left(c \prod_i (x_i t^{\lambda_i})^{e_i}\right) 
  = \Tr\left(t^{\sum_i \lambda_i e_i} c \prod_i x_i^{e_i}\right) 
  = t^{\frac{1}{p}\sum_i \lambda_i e_i} c \Tr\left(\prod_i x_i^{e_i}\right) \\
  &= t^{\frac{1}{p}\sum_i \lambda_i e_i} c \prod_i x_i^{(e_i+1)/p-1} 
  = t^{\frac{p-1}{p} \sum_i \lambda_i} c \prod_i (x_i t^{\lambda_i})^{(e_i+1)/p-1} \\
  &= t^{\frac{p-1}{p} \sum_i \lambda_i} 
  h_\lambda\left(c \prod_i x_i^{(e_i+1)/p-1}\right) 
  = t^{\frac{p-1}{p} \sum_i \lambda_i} 
  h_\lambda\left(\Tr\left(c \prod_i x_i^{e_i}\right)\right)
  \end{align*}
  and both sides are zero otherwise. This proves the equation.

  Now let $g$ be general, and consider $g$'s Newton polytope $P$.
  $\Tr(\bullet)$ and $\init$ are sensitive to different parts of $g$'s
  Newton polytope: $\Tr(g)$ only depends on the terms lying on the 
  intersection of $P$ with a coset $C$ of a lattice 
  (namely, where all exponents are $\equiv -1 \bmod p$), whereas $\init(g)$
  only depends on the terms lying over one face $F$ of $P$. 

  There are then two cases. If some terms of $g$ lie over $F\cap C$,
  then we can pick out the terms lying over $F$, and from those pick out the
  terms also lying over $C$, or do so in the opposite order.
  Either way we pick up the terms lying over $F\cap C$, and apply 
  $\Tr(\bullet)$ to them, obtaining $\Tr(\init(g)) = \init(\Tr(g))$.

  The other possibility is that no terms lie over $F\cap C$ (e.g. if
  $F\cap C = \nulset$). Then $\init(g)$ picks out the terms lying
  over $F$, and $\Tr(\init(g)) = 0$.
\end{proof}

\begin{proof}[Proof of part (2)]
  The ideal $\init(I)$ is linearly generated by $\{\init(g) : g\in I\}$.
  By lemma \ref{lem:Trinit},
  $$ \Tr\left(\init(f)^{p-1} \init(g)\right)
  = \Tr\left(\init(f^{p-1} g)\right)
  = \bigg(\init \Tr(f^{p-1} g) \text{ or } 0\bigg)
  \in \init(I) $$
  so $\init(I)$ is compatibly near-split by $\Tr(\init(f)^{p-1}\bullet)$.
\end{proof}

\junk{
\begin{Proposition}\label{prop:degennearsplit}
  Let $f \in \FF_p[x_1,\ldots,x_n]$, let $\lambda$ be a weighting
  on the variables, and let $I$ be an ideal.
  If $I$ is compatibly near-split by $\Tr(f^{p-1}\bullet)$,
  then $\lt(I)$ is compatibly near-split by $\Tr(\lt(f)^{p-1}\bullet)$.  
\end{Proposition}

\begin{proof}
  Extend scalars to the perfect ring $R = \FF_p[t^r : r \in \rationals_+]$.
  First we claim that $R\tensor I$ is compatibly near-split by
  the near-splitting $\Tr(f^{p-1}\bullet)$ on $R[x_1,\ldots,x_n]$.

  Use $\lambda$ to define an $R$-linear endomorphism $w_\lambda$ of 
  $R[x_1,\ldots,x_n]$ by $w_\lambda(x_i) = x_i t^{\lambda_i}$.
  Let $I'$ be the ideal generated by $w_\lambda(R \tensor I)$, 
  and $I''$ its saturation $I' : \langle t^\infty \rangle$.

  {show $I'$ compatibly near-split, hence $I''$ is too;
    that proof didn't require splitting. Need to show $I''$ is
    flat over $\Spec R$. 
    Then the fiber over $\langle t^r : r\in \rationals_+\rangle$
    is $\lt(I)$.  }
\end{proof}
\begin{proof}[Proof of part (2)]
  Apply proposition \ref{prop:degennearsplit} in the case that
  $\Tr(\lt(f)^{p-1}\bullet)$ defines a splitting.
\end{proof}
}

We give now a criterion which may be of independent interest,
guaranteeing that the limit of an intersection is the
entire intersection of the limits.

\begin{Lemma}\label{lem:Grobner}
  Let $A$ be a discrete valuation ring with parameter $t$, 
  so $S = \Spec A$ has one open point $S^\times$ and one closed point
  $S_0$, and let $F$ be a flat family over $\Spec A$.
  Let $X,Y$ be two reduced flat subfamilies, and assume that the special fiber
  $(X\union Y)_0$ of their union is reduced.

  Then $X \cap Y$ is the closure of $X^\times \cap Y^\times$; it has no
  components lying entirely in the special fiber. In particular
  $\left(\overline{X^\times \cap Y^\times}\right)_0 = X_0 \cap Y_0$.
\end{Lemma}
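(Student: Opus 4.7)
The plan is a local algebraic argument. Working on an affine chart $\Spec R$ of $F$, write $X = V(I_X)$ and $Y = V(I_Y)$, so the hypotheses give $R/I_X$ and $R/I_Y$ torsion-free over the DVR $A$. It suffices to show that $R/(I_X+I_Y)$ is also torsion-free over $A$: then $t$ is a non-zero-divisor on $R/(I_X+I_Y)$, so no associated prime of $X \cap Y$ contains $t$, no component of $X \cap Y$ lies in the special fiber, and $X \cap Y$ equals the closure of $X^\times \cap Y^\times$.

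The crux is the ideal identity
$$ (I_X \cap I_Y) + (t) \;=\; (I_X + (t)) \cap (I_Y + (t)). $$
The inclusion $\subseteq$ is automatic. Both sides cut out the same subset $X_0 \cup Y_0$ of the special fiber, so they have the same radical in $R/(t)$. By hypothesis the left side is already radical, so it equals the radical of the right side, and hence the right side itself.

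Next I would invoke the Mayer--Vietoris short exact sequence
$$ 0 \;\to\; R/(I_X \cap I_Y) \;\to\; R/I_X \oplus R/I_Y \;\to\; R/(I_X + I_Y) \;\to\; 0, $$
where the first map is $r \mapsto (r,r)$ and the second is $(a,b) \mapsto a-b$. The middle term is $A$-flat by hypothesis, and the leftmost term, being a submodule, is torsion-free and hence $A$-flat over the DVR $A$. Tensoring with $A/(t)$ identifies $\mathrm{Tor}_1^A(R/(I_X+I_Y),\,A/(t))$ with the kernel of the induced map $R/((I_X\cap I_Y)+(t)) \to R/(I_X+(t)) \oplus R/(I_Y+(t))$. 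That kernel equals $((I_X+(t)) \cap (I_Y+(t)))/((I_X\cap I_Y)+(t))$, which vanishes by the ideal identity above. So $R/(I_X+I_Y)$ has no $t$-torsion, as required.

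Finally, the equality $(\overline{X^\times \cap Y^\times})_0 = X_0 \cap Y_0$ is automatic once we know $X \cap Y = \overline{X^\times \cap Y^\times}$: both sides are cut out in $R/(t)$ by $(I_X+I_Y)+(t) = (I_X+(t)) + (I_Y+(t))$. The main obstacle is locating where the reducedness of $(X\cup Y)_0$ enters, and the answer is precisely the ideal identity above; once that is in hand, the Tor manipulation and the passage between $A$-flatness and the absence of components in the special fiber are routine.
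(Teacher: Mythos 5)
Your proof is correct, and it takes a genuinely different route from the paper's. The paper argues globally in the ``branchvariety'' spirit: it forms the two gluings $Z_1 = (X\coprod Y)/\overline{X^\times\cap Y^\times}$ and $Z_2 = (X\coprod Y)/(X\cap Y) \cong X\cup Y$, observes that $\pi\colon Z_1\to Z_2$ is finite and an isomorphism away from $t=0$, hence $\Fun(Z_1)$ is integral over $\Fun(Z_2)$ inside $\Fun(Z_2)[t^{-1}]$, and extracts a nonzero nilpotent in $\Fun(Z_2)/\langle t\rangle = \Fun((X\cup Y)_0)$ from any $r/t\in\Fun(Z_1)\setminus\Fun(Z_2)$, contradicting reducedness. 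You instead work affine-locally, identify the reducedness hypothesis with the ideal equation $(I_X\cap I_Y)+(t) = (I_X+(t))\cap(I_Y+(t))$, and then run the Mayer--Vietoris sequence through $\mathrm{Tor}^A_1(-,A/(t))$ to get $t$-torsion-freeness of $R/(I_X+I_Y)$. Your argument is the more elementary and self-contained one, needing nothing beyond standard facts about DVRs and the long exact Tor sequence, while the paper's version has the advantage of pointing directly at the branchvariety interpretation (the remark following the lemma, that $Z_1$ maps to the limit scheme and is an isomorphism iff the limit is reduced) which is used elsewhere. One small thing worth making explicit if you write this up: the identification $\mathrm{Tor}^A_1(M,A/(t))\cong M[t]$ (the $t$-torsion submodule), which is what lets you conclude from vanishing of the kernel; you use it but don't state it.
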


\begin{proof}
  Consider two gluings of $X$ to $Y$, along their
  common subschemes $\overline{X^\times \cap Y^\times} \into X\cap Y$:
  $$    (X \coprod Y) \bigg/ \overline{X^\times \cap Y^\times}
  \quad\onto\quad (X \coprod Y) \bigg/ (X \cap Y) 
  \quad\iso\quad X \union Y \subseteq F. $$
  Call this map $\pi: Z_1 \onto Z_2$.
  It is finite, and an isomorphism away from $t=0$, and $Z_1,Z_2$ are reduced,
  so $Fun(Z_1)$ is integral inside $Fun(Z_2)[t^{-1}]$. 

  If $\overline{X^\times \cap Y^\times} \neq X\cap Y$,
  so $Fun(Z_1) \neq Fun(Z_2)$, then there exists $r\in Fun(Z_2)$ 
  such that $r/t \in Fun(Z_1) \setminus Fun(Z_2)$. By the integrality,
  $r/t$ satisfies a monic polynomial of degree $m$
  with coefficients in $Fun(Z_2)$. Hence $r^m \equiv 0 \bmod t$,
  but $r \not\equiv 0 \bmod t$ (since $r/t \notin Fun(Z_2)$),
  so $Fun(Z_2)/\langle t \rangle = Fun((X\union Y)_0)$ has nilpotents,
  contrary to assumption.
\end{proof}

Using the {\em branchvariety} framework of \cite{AK} (from whose lemma
2.1(1) this proof has been copied), one can analyze the situation when
the ``limit scheme'' $(X\cup Y)_0$ of $X^\times \cap Y^\times$ is not
assumed reduced.
Then the zero fiber of $(X \coprod Y) \big/ \overline{X^\times \cap Y^\times}$
is the ``limit branchvariety'' of $X^\times \cap Y^\times$, which
maps to the limit scheme $(X\cup Y)_0$. In \cite{AK} we prove that
limit branchvarieties are unique, hence this map is an isomorphism
iff $(X\cup Y)_0$ is reduced.

\begin{Corollary}\label{cor:Grobner}
  Let $S = \{I\}$ be a finite set of polynomial ideals in 
  $\FF[x_1,\ldots,x_n]$ such that for any 
  $S' \subseteq S$, $\init \bigcap_{I \in S'} I$ is radical. 
  Then $\init \left( \sum_{I\in S} I \right) = \sum_{I\in S} \init I$. 

  Put another way, 
  for each $I\in S$, let $G_I$ be a Gr\"obner basis for $I$.
  Then $\Union_{I\in S} G_I$ is a Gr\"obner basis for $\sum_{I\in S} I$.
\end{Corollary}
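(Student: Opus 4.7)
The plan is to reduce the statement to pair-wise application of Lemma \ref{lem:Grobner}, combined with Buchberger's S-polynomial criterion for Gr\"obner bases. The containment $\sum_I\init I \subseteq \init\sum_I I$ is automatic from the definition of initial ideals, so the content is the reverse inclusion.

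\emph{Step 1 (pair-wise equality).} For each pair $I_i,I_j\in S$, apply Lemma \ref{lem:Grobner} to the Rees-style flat families $\tilde X_{I_i}, \tilde X_{I_j}$ over $\Spec\FF[t]$ (whose generic fibers are $V(I_i),V(I_j)$ and special fibers are $V(\init I_i),V(\init I_j)$). The reducedness of each $\tilde X_{I_i}$ follows from the hypothesis that $\init I_i$ is radical (taking $S'=\{I_i\}$) via the standard initial-ideal principle ``$\init J$ radical implies $J$ radical''; this makes the generic fiber reduced, and a flat family over a DVR with reduced generic fiber is reduced. The remaining hypothesis that $(\tilde X_{I_i}\cup\tilde X_{I_j})_0$ is reduced translates to $\init(I_i\cap I_j)$ being radical, which is the hypothesis applied to $S'=\{I_i,I_j\}$. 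Lemma \ref{lem:Grobner} then yields $(\tilde X_{I_i}\cap\tilde X_{I_j})_0 = (\tilde X_{I_i})_0\cap(\tilde X_{I_j})_0$ scheme-theoretically, i.e., $\init(I_i+I_j)=\init I_i+\init I_j$.

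\emph{Step 2 (Buchberger).} Pick a finite Gr\"obner basis $G_I$ for each $I\in S$, so $\{\init g:g\in G_I\}$ generates $\init I$. Then $G:=\bigcup_{I\in S}G_I$ is a finite generating set of $\sum_I I$, and to verify $G$ is a Gr\"obner basis of $\sum_I I$ it suffices, by Buchberger's criterion, to check that every S-polynomial $S(g,h)$ for $g,h\in G$ reduces to zero modulo $G$. If $g,h\in G_I$ for the same $I$ this is immediate, since $G_I$ is already Gr\"obner for $I$. If $g\in G_I,h\in G_J$ with $I\ne J$, then $S(g,h)\in I+J$; Step 1 shows $\init(G_I\cup G_J)=\init I+\init J=\init(I+J)$, so $G_I\cup G_J$ is itself a Gr\"obner basis of $I+J$, and therefore $S(g,h)$ reduces to zero modulo $G_I\cup G_J\subseteq G$. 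Hence $G$ is a Gr\"obner basis of $\sum_I I$, which simultaneously yields the ideal equality $\init\sum_I I=\sum_I\init I$ and the concluding Gr\"obner-basis reformulation.

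The main obstacle is Step 1: verifying both reducedness hypotheses of Lemma \ref{lem:Grobner} for each pair, which is where the full hypothesis ``$\init\bigcap_{S'}I$ radical for each $S'\subseteq S$'' enters (at singletons, to get $\tilde X_{I_i}$ reduced, and at doubletons, to get $(\tilde X_{I_i}\cup \tilde X_{I_j})_0$ reduced). Once pair-wise equality is in hand, the combinatorial Buchberger step is routine; this is cleaner than a naive induction on $|S|$, which would require tracking reducedness through iterated scheme-theoretic intersections whose generic fibers need not be reduced.
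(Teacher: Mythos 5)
Your proof is correct, but it takes a genuinely different route from the paper for the reduction from general $S$ to pairs. The paper's proof dispatches this with the single clause ``using induction we can reduce to the case $S=\{I_1,I_2\}$'' and then proves only the two-ideal case. A naive induction is not obviously available: to apply Lemma~\ref{lem:Grobner} to $I_m$ and $J:=\sum_{i<m}I_i$ one would need the Gr\"obner family of $J$ to be a \emph{reduced} flat subfamily, hence $J$ radical, and radicality of $J=\sum_{i<m}I_i$ is not among the stated hypotheses (they concern $\init$ of \emph{intersections}, not of sums). Your Buchberger argument sidesteps this entirely, and as a byproduct shows that only the $|S'|\leq 2$ instances of the hypothesis are actually used: singletons to get each $I_i$ radical (hence each Gr\"obner family reduced) and doubletons to get each $\init(I_i\cap I_j)$ radical (hence each $(\tilde X_{I_i}\cup\tilde X_{I_j})_0$ reduced, using that the Gr\"obner family commutes with finite unions). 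In the paper's applications the $\{I_i\}$ are compatibly split for a common Frobenius splitting, so the full hypothesis holds automatically and nothing is lost, but your formulation is cleaner as a stand-alone statement. One caveat worth recording: Buchberger's S-polynomial criterion as you invoke it presupposes a monomial (total) order, whereas $\init$ here is taken with respect to a weight vector $\lambda$, for which $\init f$ need not be a single monomial. This is repaired by the standard remark the paper makes just after Theorem~\ref{thm:frobdegen} (citing~\cite{Sturmfels96}): for the finitely many ideals in play one may refine $\lambda$ to a genuine monomial order $\lambda'$ with $\init_\lambda=\init_{\lambda'}$ on all of them, and then run your argument with respect to $\lambda'$.
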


\begin{proof}
  For any $S' \subseteq S$, the conditions apply to $S'$, so using induction
  we can reduce to the case $S=\{I_1,I_2\}$. 

  Let $F$ be the trivial family over $\FF[[t]]$ with fiber $\AA^n$, 
  and $X,Y \subseteq F$ be the Gr\"obner families whose general fibers
  are defined by $I_1,I_2$ and special fibers by $\init I_1, \init I_2$.
  Then $\init(I_1 + I_2)$ and $\init I_1 + \init I_2$ are
  the defining ideals of 
  $\left(\overline{X^\times \cap Y^\times}\right)_0$ and $X_0 \cap Y_0$
  respectively.
  The condition $\init (I_1 \cap I_2)$ radical allows us to invoke
  lemma \ref{lem:Grobner} to infer these are equal.
\end{proof}

A very similar result appears in \cite[lemma 3.2]{tropical}.

\begin{proof}[Proof of part (3)]
  Plainly any $Y' \in \YY_{\init f}$ has some $Y \in \YY_f$ 
  such that $\init Y \supseteq Y'$; take $Y = \AA^n$. 
  We first need to show there is a unique minimal such $Y$.
  By corollary \ref{cor:Grobner}, if $Y' \subseteq \init Y_1$ and
  $Y' \subseteq \init Y_2$, then
  $Y' \subseteq \init (Y_1 \cap Y_2) = \init \Union_Z Z = \Union_Z \init Z$
  where $Z$ ranges over the components (all compatibly split) 
  of the $Y_1 \cap Y_2$. Hence $Y' \subseteq \init Z$ for one of those $Z$.
  If $Y_1,Y_2$ were both minimal, then $Z = Y_1$ and $Z = Y_2$,
  showing the uniqueness.

  Given $Y \in \YY_f$, let $Y'$ be a component of $\init Y$, 
  necessarily in $\YY_{\init f}$ by part (2). Since $\dim \init Y = \dim Y$,
  there can be no $Z \subsetneq Y, Z \in \YY_f$ with $Y' \subseteq \init Z$.
  Therefore the map takes $Y' \mapsto Y$, proving the surjectivity.

  Now $Y_1 = \pi_{f,\init}(Y_1')$, and $Y_2 \in \YY_f$.
  \begin{itemize}
  \item $\Longrightarrow$: 
    If $Y_1 \supseteq Y_2$, then $Y_1' \subseteq \init Y_1 \subseteq \init Y_2$,
    so $Y_1'$ is contained in some component $Y_2'$ of $\init Y_2$.
    Then $Y_2' \subseteq \init Y_2$. 

    Since $Y_2$ is irreducible,
    $\init Y_2$ is equidimensional of the same dimension as $Y_2$,
    so $\dim Y_2' = \dim Y_2$, and also any $Z \subsetneq Y_2$
    has $\dim Z < \dim Y_2$. Hence $Y_2' \not\subset \init Z$.
    Together, this shows $\pi_{f,Y_2'} = Y_2$.
  \item $\Longleftarrow$:
    Now say $\exists Y_2' \supseteq Y_1'$.
    Then $Y_1' \subseteq Y_2' \subseteq \init \pi_{f,\init}(Y_2')$.
    The definition of $\pi_{f,\init}(Y_1')$ is as the {\em least} $Y\in \YY_f$
    such that $Y_1' \subseteq \init Y$, hence 
    $\pi_{f,\init}(Y_1') \subseteq \pi_{f,\init}(Y_2')$.
    If $\pi_{f,Y_2'} = Y_2$, this says $Y_1 \subseteq Y_2$.
  \end{itemize}
  \junk{

  For the last claim, define a relation on $\YY_{\init f}$ by
  $Y'_1\sim Y'_2$ if $Y'_1 \subseteq Y'_2$ 
  and $\dim \pi_{f,\init}(Y'_1) = \dim \pi_{f,\init}(Y'_2)$. 
  Extend minimally to an equivalence relation. 
  Then we claim that $Y'_1 \sim Y'_2$ 
  iff $\pi_{f,\init}(Y'_1) = \pi_{f,\init}(Y'_2)$.
  \begin{itemize}
  \item To show $Y'_1 \sim Y'_2$ 
    implies $\pi_{f,\init}(Y'_1) = \pi_{f,\init}(Y'_2)$,
    by transitivity and induction
    it is enough to show it for the relations generating the
    equivalence relation, i.e. where $Y'_1 \subseteq Y'_2$ 
    and $\dim \pi_{f,\init}(Y'_1) = \dim \pi_{f,\init}(Y'_2)$.
    In this case, $\pi_{f,\init}(Y'_1) \subseteq \pi_{f,\init}(Y'_2)$,
    and when one irreducible variety contains another of the same
    dimension they must be equal.
  \item Now say $\pi_{f,\init}(Y'_1) = \pi_{f,\init}(Y'_2) = Y$.
    Then $Y'_1, Y'_2 \subseteq \init Y$, so there exist components
    $Z'_1,Z'_2$ of $\init Y$ with $Y'_1 \subseteq Z'_1$, $Y'_2 \subseteq Z'_2$.
    Then $\pi_{f,\init}(Z'_1) = \pi_{f,\init}(Z'_2) = Y$, so
    $Y'_1 \sim Z'_1$, $Y'_2 \sim Z'_2$. It remains to show $Z'_1 \sim Z'_2$.

    Since $f$ is homogeneous, all the compatibly $\Tr(f^{p-1}\bullet)$-split 
    subvarieties are conical. Hence we can apply Zariski's Main Theorem
    to the projective variety corresponding to $Y$, and learn that 
    $\init Y$ is connected. Hence there is a 
    chain $Z'_1 = C_1,\ldots,C_m = Z'_2$ of components of $\init Y$,
    each $C_i \cap C_{i+1} \neq 0$, and it is enough to show each
    $C_i \sim C_{i+1}$. For that, pick $D$ a component of $C_i\cap C_{i+1}$.
    ...

  \end{itemize}
  }
\end{proof}

We conjecture that the equivalence relation induced on $\YY_{\init f}$
by $\pi_{f,\init}$ can be determined from the $\naturals$-valued
function $Y' \mapsto \dim \pi_{f,\init}(Y')$, as the symmetric,
transitive extension of the relation ``$Y'_1 \subseteq Y'_2$ 
  and $\dim \pi_{f,\init}(Y'_1) = \dim \pi_{f,\init}(Y'_2)$''. 

\begin{proof}[Proof of part (4)]
  First we claim that $C$ is a component of $\init Y$
  iff $\dim C = \dim Y$ and $\pi_{f,\init}(C) = Y$ and $\dim C = \dim Y$.
  \begin{itemize}
  \item $\Longrightarrow$: $\init Y$ is equidimensional of dimension $\dim Y$,
    so $C$ has that same dimension. Also $\pi_{f,\init}(C) \subseteq Y$,
    so $C \subseteq \init \pi_{f,\init}(C) \subseteq \init Y$, hence
    $\pi_{f,\init}(C)$ is contained in $Y$ and of the same dimension.
    Since $Y$ is irreducible, $\pi_{f,\init}(C) = Y$.
  \item $\Longleftarrow$: $C \subseteq \init Y$, whose components have
    dimension $\dim Y$, one of whom contains $C$. Hence $C$ equals
    that component, as above.
  \end{itemize}
  Since $f$ is homogeneous, by lemma \ref{lem:homog}, each $Y \in \YY_f$
  is an affine cone and has a well-defined $\deg Y$. Then
  $$ \deg Y = \deg (\init Y) 
  = \deg \sum_{C \in comps(\init Y)} \deg C 
  = \deg \sum_{C : \pi_{f,\init}(C) = Y, \dim C = \dim Y} \deg C $$
  with the last equality by the claim above.
\end{proof}

One can extend this to a calculation of the Hilbert series, not just
the degree, using the result of \cite{Mobius}.

\section{Proof of theorem \ref{thm:countdegen}}
\label{sec:pfcountdegen}
 
\begin{proof}[Proof of theorem \ref{thm:countdegen}]
  \emph{(1)$\implies$(2).} 
  This is the Geometric Vertex Decomposition Lemma of \cite{automatically}.

  \emph{(2)$\implies$(3).}
  Let $I_X$ be the ideal defining $X \subseteq H\times L$.
  Working through the definition of the scheme $X'$, we see its
  ideal of definition is $\init(I)$ with respect to 
  the weighting on the variables $\lambda(h_i)=0, \lambda(\ell)=1$. 

  Let $I_\Pi \leq I_\Lambda$ be the ideals defining
  $\Pi,\Lambda\subseteq H$, with generators $I_\pi = \langle (p_i) \rangle$, 
  $I_\Lambda = \langle (p_i), (q_i) \rangle$, where $p_i,q_i\in \Fun(H)$.
  Since $X'$ is assumed reduced, 
  and we know its support is $(\Pi\times\{0\})\cup (\Lambda\times L)$, 
  $$ I_{X'} = (I_{\Pi \times L} + \langle \ell\rangle) \cap I_{\Lambda\times L} 
  = \langle (p_i), (\ell q_i) \rangle. $$
  Lifting these to generators of $I_X$, we learn
  $$ I_X = \langle (p_i), (\ell q_i + q'_i) \rangle, 
  \qquad \text{for some } q'_i\in \Fun(H). $$
  Projectively completing $L$ to $\Proj \FF[\ell,m]$, and
  closing up $X$ to $\barX \subseteq H \times (L\cup\{\infty\})$, we get
  $$ I_{\barX} = \langle (p_i), (\ell q_i + m q'_i) \rangle. $$

  To determine a fiber of $\barX\to H$, we specialize $H$'s coordinates to 
  values. If all $q_i,q'_i \mapsto 0$ under this specialization,
  then $[\ell,m]$ is free, and the fiber is $\Pone$. 
  Otherwise some equation $\ell q_i + m q'_i=0$ 
  uniquely determines $[\ell,m] \in \Pone$, 
  making the fiber either empty or a point.
  This proves the fibers are connected.

  \emph{(3)$\implies$ the projection is degree $1$ or $0$ to any component.}
  Recall that $\Pi$, by definition, is closed in $H$.
  The map $\barX \to \Pi$ is proper and its image contains an open set, 
  so it is surjective.
  (Note that $X \to \Pi$ itself is usually not surjective;
  part of our task is to describe the points missing from the image.)
  In particular it hits the generic point of each component of $\Pi$, 
  and being reduced has reduced generic fiber, 
  so the connectedness of the fibers gives the claimed degree $1$
  over each component (or $0$ if all the fibers are $\Pone$s).

  So some fibers are points and some are $\Pone$s.
  The latter type are the ones lying over $\Lambda'$.
  But even the point fibers come in two types: those in $X$, and
  those in $(\Lambda \setminus \Lambda')\times \{\infty\}$. This shows that $X$
  misses $(\Lambda \setminus \Lambda')\times L$, and the same is
  true when we project out $L$.

  We wish to show that the map 
  $\pi : X\setminus (\Lambda'\times L) \to \Pi \setminus \Lambda$
  is an isomorphism, not merely bijective. 
  First we consider the case $\Lambda = \nulset$. 
  Since $I_\Lambda = \langle 1 \rangle$, 
  we have $I_X = \langle (p_i), (\ell + q'_i) \rangle$, where the $(p_i)$
  cut out $\Pi$. If there is more than one relation $\ell + q'_i$ on $X$
  then the differences $q'_i - q'_j$ are also satisfied on $\Pi$,
  hence generated by the $(p_i)$, so we may assume there is only
  one such relation. Then we can use it to eliminate $\ell$ and
  determine that $\pi: X \iso \Pi$ is an isomorphism, 
  in this restricted case $\Lambda = \nulset$.
  In the general case, we already know that
  $\pi : X\setminus (\Lambda'\times L) \to \Pi \setminus \Lambda$ 
  is bijective, so 
  we need to check over open sets $U = \Spec A$ covering $\Pi\setminus\Lambda$.
  Replacing $\Pi$ by $U$ (reimbedded as a closed subset of some new $H$)
  and $X$ by $\pi^{-1}(U)$, we can reduce to
  the already solved subcase that $\Lambda = \nulset$.

  This gives an equation in the Grothendieck ring of algebraic varieties,
  $$ [X] = [X \setminus (\Lambda \times L)] + [\Lambda' \times L]
  = [\Pi \setminus \Lambda] + [\Lambda'\times L] 
  = [\Pi] - [\Lambda] + [\Lambda'][L] $$
  and hence
  \begin{align*}
    [X'] &= [\Pi \times \{0\}] + [\Lambda \times L] - [\Lambda \times \{0\}]
    &\text{since $X' 
      = (\Pi\times \{0\})\cup_{\Lambda\times \{0\}} (\Lambda \times L)$} \\
    &= [\Pi] + [\Lambda][L] - [\Lambda] &\\
    &= ([X] + [\Lambda] - [\Lambda'][L]) + [\Lambda][L] - [\Lambda] 
    &\text{by the above}\\
    &= [X] - [\Lambda'][L] + [\Lambda][L] &\\
    &= [X] + [\AA^1]\ [\Lambda\setminus \Lambda']. &
  \end{align*}

  The map $\iota: X \to X'$ defined by
  $$ \iota(h,\ell) := 
  \begin{cases}
    (h,\ell) &\text{if $h\in \Lambda'$} \\
    (h,0) &\text{if $h\notin \Lambda'$} 
  \end{cases}
  $$
  is injective, and its
  image is the complement of $(\Lambda\setminus \Lambda')\times L$.

  \junk{
    So break up 
    $$ \Pi = (\text{image of } X \setminus \Lambda') 
    \coprod (\Lambda \setminus \Lambda') \coprod \Lambda'. $$
    Lying above each $\FF_p$ point in $\Pi$, we count $\FF_p$ points 
    in $X$ and $X'$:
    \begin{align*}
      &\text{image of } X \setminus \Lambda' &
      \Lambda \setminus \Lambda' & & \Lambda' \\
      \text{in }X:&  \quad  1    &     0\quad  & & p\\
      \text{in }X':& \quad  1    &     p\quad  & &  p    
    \end{align*}
  } 

  For the statement on $K$-classes, first define 
  $\barX' := \lim_{t\to 0} t\cdot \barX$ by the same limiting procedure 
  as $X'$. Then note that the action of $G$ on $H\times L$ 
  commutes with the $\Gm$-action, and that
  $K_G$-classes are constant in locally free equivariant families
  such as the one defining $\barX'$.
  (For the nonequivariant statement one can take $G=1$.)
  Hence $[\calO_\barX] = [\calO_{\barX'}]$ as elements of
  $K_G(H\times (L\cup \{\infty\}))$, and in turn 
  $$ [\calO_{\barX'}] = [\calO_{\Pi \times \{0\}}] 
  + [\calO_{\Lambda \times (L\cup\{\infty\})}]
  - [\calO_{\Lambda \times \{0\}}] \quad 
  \in K_G(H\times (L\cup \{\infty\})). $$
  Since the projection $L\cup\{\infty\} \to pt$ takes $\calO_{L\cup\{\infty\}}$
  to $\calO_{pt}$ with no higher direct images, 
  $$ \pi_! [\calO_{\barX'}] = \pi_! [\calO_{\Pi \times \{0\}}] 
  + \pi_! [\calO_{\Lambda \times (L\cup\{\infty\})}]
  - \pi_! [\calO_{\Lambda \times \{0\}}] 
  =  [\calO_{\Pi}] 
  +  [\calO_{\Lambda}] 
  -  [\calO_{\Lambda}]  
  =  [\calO_{\Pi}] 
  \quad \in K_G(\Pi). $$
\end{proof}

In fact the proof (2)$\implies$(3), and the statement about $K$-classes, 
did not use $\Lambda$ reduced; it is enough to assume $X'$ has no
embedded components along $\Pi\times \{0\}$.  This covers the example
of $h^2\ell = 0$ given after the statement of theorem \ref{thm:countdegen}.

We give some examples in which $\Lambda'$ appears.
Let $H = \{(x,y,0)\}$ and $L = \{(0,0,\ell)\}$, 
and let $X = \{(x,y,\ell) : x = \ell y\}$, with $p^2$ points.
Its closure $\barX = \{(x,y,[\ell,m]) : xm = \ell y\}$ is the blowup of $H$
at the origin (so $\Pi = H$). Hence 
$$ \barX \setminus X = \{(x,y,[\ell,0]) : 0 = \ell y\} 
= \{(x,y,[1,0]) : 0 = y\} 
= \{(x,0,[1,0])\} $$ 
so $\Lambda = \{(x,0)\}$. 
The only point in $\Lambda$ hit by the projection $X \to H$ 
is $(0,0)\in \Lambda$, so $\Lambda' = \{(0,0)\}.$
Thus
$$ X' = (H\times \{0\}) \Union_{\Lambda\times\{0\}} (\Lambda\times L) $$
has $p^2 + p^2 - p$ points, and $\Lambda \setminus \Lambda'$ has $p-1$,
giving us the expected
$$ |X'| = p^2 + p^2 - p 
\quad=\quad p^2 + p(p-1) = |X| + p\,\big|\Lambda \setminus \Lambda'\big|. $$

If we take $f = (\ell y-x)(x-1) \in \FF[x,y,\ell]$, then
$f = \ell y(x-1) + x(1-x)$ and
$\Tr(f^{p-1}\bullet)$ defines a splitting of $H\times L$ that 
compatibly splits $X$. But $\Lambda' = \{(0,0)\}$ is not compatibly split
by $\Tr((y(x-1))^{p-1}\bullet)$; the only point that is compatibly split
is $(1,0)$. 

In the notation of the proof, the set $\Lambda'$ is easily seen to be
cut out by the ideal $\langle (p_i), (q_i), (q'_i) \rangle$.
In the following example, this ideal is not even radical, much
less compatibly split. Let $H,L$ be as above, 
with $f = x(\ell y - x) = \ell xy - x^2$, and $X$ the hypersurface $f=0$.
Then $\Pi = H$, 
and $\langle (p_i), (q_i), (q'_i) \rangle 
= \langle (q_i), (q'_i) \rangle = \langle xy, -x^2 \rangle$,
supported on $\Lambda' = \{(0,y)\}$ (which {\em is} compatibly
split by $\Tr(f^{p-1}\bullet)$).

\section{Proof of lemmas \ref{lem:homog} and
  \ref{lem:coordspaces}, and 
  theorems \ref{thm:initprod}, \ref{thm:Grobner}, 
  and \ref{thm:schemeversion}}

\begin{proof}[Proof of lemma \ref{lem:homog}]
  Since the set of compatibly split ideals is finite \cite{Schwede,KumarMehta} 
  hence discrete, if a connected group $G$ preserves the decomposition 
  $R = R^p \oplus \ker \varphi$ on a ring $R$, it must preserve each
  compatibly split ideal. Here we take $G = {\mathbb G}_m$ acting by
  $z \cdot x_i = z^{\lambda_i} x_i$, for which $f$ is assumed to be
  a weight vector, and hence $G$ preserves $\ker \Tr(f^{p-1}\bullet)$.
  Together, we learn this $G$ preserves each compatibly split subscheme $Y$,
  which is equivalent to the statement $Y = \init Y$.
\end{proof}

\begin{proof}[Proof of lemma \ref{lem:coordspaces}]
  Trivially this $\varphi(1)=1$, so $\varphi$ is a splitting.
  For $(\prod_i x_i) r \in \langle \prod_i x_i \rangle$, 
  $$ \varphi\left( (\prod_i x_i) r\right) 
  = \Tr\left( (\prod_i x_i)^{p-1} (\prod_i x_i) r\right) 
  = (\prod_i x_i) \Tr(r) \in \langle \prod_i x_i \rangle, $$
  so $\langle \prod_i x_i \rangle$ is compatibly split.
  The components (also compatibly split) of that ideal define the
  coordinate hyperplanes, whose intersections (also compatibly split)
  are the coordinate subspaces, whose unions (also compatibly split)
  are defined by squarefree monomial ideals.
  
  For the converse, note that $\prod_i x_i = \init \prod_i x_i$ for
  any weighting $\lambda$, hence by lemma \ref{lem:homog}
  a compatibly split subscheme $Y$ must have $Y = \init Y$ 
  for any weighting $\lambda$, which forces $Y$ to be a coordinate subspace.
  (The same argument applies to the ``standard splitting'' of
  any toric variety.)
\end{proof}

For $X$ a Frobenius split scheme, let $\YY_X$ be its set of 
compatibly split subvarieties.
Then there is an
associated decomposition $X = \coprod_{Y\in \YY_X} Y^\circ$,
where $Y^\circ = Y \setminus \union_{Z \in \YY_X, Z\subsetneq Y} Z$.
We point out that
this is a \defn{stratification}, meaning that every closed stratum $Y$ 
is the union $\coprod_{Z^\circ \subseteq Y} Z^\circ$ of the open strata
contained fully in it. Proof: $Y$ is certainly the union
$\coprod_{Z^\circ} (Y \cap Z^\circ)$ of its intersection with the
open strata meeting it. But the components of $Y\cap Z$ are again
elements of $\YY_X$ by the properties of Frobenius splitting, 
so it suffices to use $Z\subseteq Y$. QED.

Within the geometric vertex decomposition context, 
we can define a sort of inverse to the map $\pi_{f,\init}$ from 
theorem \ref{thm:frobdegen} part (3), in the following proposition.

\begin{Proposition}\label{prop:algdegen}
  Let $H = \AA^{n-1}_{\FF_p}$ and $L = \AA^1_{\FF_p}$,
  with coordinates $h_1,\ldots,h_{n-1}$ and $\ell$.
  Let $f \in \Fun(H\times L) = \FF_p[h_1,\ldots,h_{n-1},\ell]$ 
  be of degree $n$, where $f = \ell g_1 + g_2$ 
  and $g_1,g_2 \in \Fun(H) = \FF_p[h_1,\ldots,h_{n-1}]$.

  Assume that $\Tr_H(g_1^{p-1}\bullet)$ defines a Frobenius splitting on
  $\Fun(H)$.
  Then $\Tr_{H\times L}(f^{p-1}\bullet)$ defines a Frobenius splitting on
  $\Fun(H\times L)$.
  Let $\YY_{H,g_1}, \YY_{\AA^n,f}$ denote the corresponding sets 
  of compatibly split subschemes. Let $\pi:H\times L\to H$ denote
  the linear projection. Then the map
  $$ Y \in \YY_{\AA^n,f}
  \qquad\mapsto\qquad \overline{\pi(Y)} $$
  takes values in $\YY_{H,g_1}$, and is injective on 
  $\{Y \in \YY_{\AA^n,f} : Y$ is not of the form $P\times L, P\subseteq H\}$.
  Let $\YY^{\not\supseteq \bullet\times L}_{\AA^n,f}$ denote this subset
  of $\YY_{\AA^n,f}$.
\end{Proposition}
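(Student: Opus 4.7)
The plan is to reduce both claims to Proposition~\ref{prop:gvdsplit} and Theorem~\ref{thm:countdegen}. The first statement---that $\Tr_{H\times L}(f^{p-1}\bullet)$ defines a Frobenius splitting on $\Fun(H\times L)$ and that $\overline{\pi(Y)} \in \YY_{H,g_1}$---is immediate from Proposition~\ref{prop:gvdsplit} applied to $X := Y$: its first assertion produces the splitting on $H \times L$, and the remainder of that proposition says that $\Pi := \overline{\pi(Y)}$ is compatibly split by $\Tr_H(g_1^{p-1}\bullet)$, i.e.\ $\Pi \in \YY_{H,g_1}$.

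For the injectivity on $\YY^{\not\supseteq\bullet\times L}_{\AA^n,f}$ I would argue by contradiction. Suppose $Y_1, Y_2$ are distinct elements of that set with common projection closure $\Pi$. The reduced union $Y_1 \cup Y_2$ is again compatibly split, since its ideal $I_{Y_1} \cap I_{Y_2}$ is an intersection of compatibly split ideals. Applying Proposition~\ref{prop:gvdsplit} to $X := Y_1 \cup Y_2$ then places us in case~(2) of Theorem~\ref{thm:countdegen} for this $X$, and hence also in case~(3): the fibers of $\overline{Y_1 \cup Y_2} \to \Pi$ are connected.

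The contradiction comes from inspecting the generic fiber of this map. Since each $Y_i$ is irreducible with projection closure $\Pi$ and is not of the form $P \times L$, one has $\dim Y_i = \dim \Pi$; applying Theorem~\ref{thm:countdegen} to each $Y_i$ separately forces $\pi \colon Y_i \to \Pi$ to be birational with single-point generic fiber. Consequently the generic fiber of $\overline{Y_1 \cup Y_2} \to \Pi$ consists of two distinct closed points, one from each $Y_i$ (distinct because each $Y_i$ is the closure of its generic section and $Y_1 \neq Y_2$), which is disconnected and contradicts~(3). The main obstacle is simply verifying that Proposition~\ref{prop:gvdsplit} really applies to a reducible subscheme like $Y_1 \cup Y_2$, which it does because compatible splitting is preserved under intersection of ideals; once this is in place the generic-fiber argument is routine.
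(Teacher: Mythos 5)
Your proof is correct and follows essentially the same logical path as the paper's. The one genuine simplification is that you invoke Proposition~\ref{prop:gvdsplit} directly for the first two claims (existence of the splitting on $H\times L$, and $\overline{\pi(Y)}\in\YY_{H,g_1}$), whereas the paper's own proof re-derives those facts from Theorems~\ref{thm:frobdegen} and \ref{thm:countdegen}; both routes pass through the same intermediate facts, so this is a harmless shortcut. For injectivity, the paper phrases the contradiction via the additivity of degrees (degree of $Y_1\cup Y_2\to\Pi$ would be $2$, not $1$), while you phrase it via disconnectedness of the two-point generic fiber contradicting condition~(3) of Theorem~\ref{thm:countdegen}; these are the same argument, since (3) is exactly the step that delivers the degree-$1$-or-$0$ conclusion, and you correctly note that Proposition~\ref{prop:gvdsplit} applies to the reducible $Y_1\cup Y_2$ because compatible splitting passes to intersections of ideals.
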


\begin{proof}
  Let $\lambda_\ell$ assign weight $1$ to $\ell$ and weight $0$ to any $h_i$,
  and let $\init_\ell g$, $\init_\ell J$ denote the corresponding initial 
  term or ideal, e.g. $\init_\ell f = \ell g_1$. 
  Then by theorem \ref{thm:frobdegen}
  $\Tr_{H\times L}(f^{p-1}) = \Tr_{H\times L}((\ell g_1)^{p-1})$,
  which in turn is $\Tr_H(g_1^{p-1})$. So $\Tr_H(g_1^{p-1}\bullet)$ defines
  a splitting on $H$ iff $\Tr_{H\times L}(f^{p-1}\bullet)$ does on $H\times L$.

  This splitting $\Tr_{H\times L}((\ell g_1)^{p-1}\bullet)$ compatibly splits 
  the hyperplane $H\times\{0\}$, and the induced splitting on $H$
  is $\Tr(g_1^{p-1}\bullet)$, under the identification of
  $\FF_p[h_1,\ldots,h_{n-1},\ell] / \langle \ell\rangle$
  with $\FF_p[h_1,\ldots,h_{n-1}]$.

  Let $Y \in \YY_{\AA^n,f}$, and $Y'$ its limit as studied
  in theorem \ref{thm:countdegen}. That is, if their ideals are
  $I_Y,I_{Y'}$, then $I_{Y'} = \init_\ell I_Y$.
  So again by theorem \ref{thm:frobdegen},
  $Y'$ is compatibly split with respect to the splitting
  $\Tr((\init_\ell f)^{p-1}\bullet) = \Tr((\ell g_1)^{p-1}\bullet)$.
  Since $Y'$ is therefore reduced, 
  $Y' = (\Pi\times \{0\}) \union (\Lambda\times L)$ where $\Pi,\Lambda$
  are also as in theorem \ref{thm:countdegen}.
  Since $H \times \{0\}$ is also compatibly split by 
  $\Tr((\ell g_1)^{p-1}\bullet)$, 
  we see that $(H \times \{0\})\cap Y' = \Pi\times \{0\}$ 
  is compatibly split by $\Tr((\ell g_1)^{p-1}\bullet)$.
  Since $Y$ is irreducible (being an element of $\YY_{\AA^n,f}$),
  so is $\Pi = \overline{\pi(Y)}$. All together this shows
  that $\overline{\pi(Y)} \in \YY_{H,g_1}$ as claimed.

  To see the claimed injectivity, let $Y_1,Y_2 \in \YY_{\AA^n,f}$
  with $\overline{\pi(Y_1)} = \overline{\pi(Y_2)} =: \Pi$, and 
  neither $Y_1$ nor $Y_2$ contain components of the form $P\times L$. 
  Let $Y = Y_1 \cup Y_2$,
  again compatibly split.
  Since their corresponding degenerations $Y'_1,Y'_2,Y'$ 
  are compatibly split (with respect to $\Tr((\ell g_1)^{p-1}\bullet)$)
  hence reduced, the projections $Y_1,Y_2,Y \to \Pi$ are each degree $1$
  over every component of $\Pi$ by theorem \ref{thm:countdegen}.
  If $Y_1\neq Y_2$ over some component of $\Pi$, 
  the degree of $Y_1\cup Y_2 \to \Pi$ would be the
  sum of the two degrees, so each $2$ not $1$. Since $Y_1,Y_2$ agree over
  each component of their common projection, and contain
  no components of the form $P\times L$, we obtain $Y_1=Y_2$.
\junk{
  We may assume that the decomposition $H = \coprod H_\gamma$ {\em is}
  the stratification by compatibly split subvarieties, rather than
  one that refines it; if one refines some $H_\gamma$ to a union
  $\coprod_\delta (H_\gamma)_\delta$ then one can similarly refine each 
  $A_{(\gamma,i)} 
  = \coprod_\delta A_{(\gamma,i)} \cap ((H_\gamma)_\delta \times L)$.
  
  As indicated, the decomposition
  $\AA^n = \coprod_{(\gamma,i)} A_{(\gamma,i)}$ is to be a refinement
  of the decomposition $\coprod_{\gamma} H_{\gamma}\times L$.
  Already any $Y \in \YY_{\AA^n,f}$ of the form $\Pi\times L$
  is a union of these strata, so in defining $\{A_{(\gamma,i)}\}$ we need to 
  keep in mind the remaining $\{Y\}$, 
  in $\YY^{\not\supseteq \bullet\times L}_{\AA^n,f}$.

  There are two cases. If $\overline{H_\gamma} = \pi(Y)$ for
  some $Y \in \YY^{\not\supseteq \bullet\times L}_{\AA^n,f}$, then let
  $A_{(\gamma,0)} = Y \cap (H_\gamma \times L)$. 
  Otherwise let $A_{(\gamma,0)} = H_\gamma \times \{0\}$.
  Either way, 
  let $A_{(\gamma,1)} = (H_\gamma \times L) \setminus A_{(\gamma,0)}$.

  Now we claim that each $Y \in \YY^{\not\supseteq \bullet\times L}_{\AA^n,f}$
  is the union of the $A_{(\gamma,i)}$ that it intersects.
  {why would that be? I think that requires $\Lambda'$ to
    be compatibly $\init f$-split.}
}
\end{proof}

\begin{proof}[Proof of theorem \ref{thm:initprod}]
  If $\deg f < n$, we can multiply it by the product of the variables
  not appearing in $\init f$, thereby only increasing the set of $\{Y\}$
  obtained by the algorithm. We thereby reduce to the case $\deg f = n$
  and $\init f = \prod_{i=1}^n x_i$.

  If $p\notdivides c$, then $c\prod_i x_i$ is still a term in
  the reduction of $f$ mod $p$, and still the initial term.
  Hence $\init(f)^{p-1} \equiv c^{p-1}\prod_i x_i^{p-1} 
  \equiv \prod_i x_i^{p-1} \bmod p$ by Fermat's Little Theorem.

  Since $\Tr(\prod_i x_i^{p-1}\bullet)$ defines a splitting
  (indeed, the standard splitting), so does $f \bmod p$,
  by theorem \ref{thm:frobdegen} part (1). 
  Part (2) says that for any compatibly split $I \leq \FF_p[x_1,\ldots,x_n]$,
  the initial ideal $\init(I)$ is compatibly split 
  by $\Tr(\init(f)^{p-1}\bullet) = \Tr(\prod_i x_i^{p-1}\bullet)$,
  hence is a Stanley-Reisner ideal by lemma \ref{lem:coordspaces}.

  Now let $I \leq \integers[x_1,\ldots,x_n]$ be one of the ideals
  constructed using the algorithm in corollary \ref{cor:allradical}.
  Only finitely many quotient rings $R/I'$ are encountered on the way to $R/I$,
  each of which is flat over an open set in $\Spec \integers$.
  Hence if we restrict to primes $p$ in this finite intersection of open sets,
  when we work the algorithm mod $p$ we encounter $I \bmod p$.
  Let $S$ be the set of primes we are avoiding so far.

  Let $(g_i)$ be a Gr\"obner basis for 
  $S^{-1}I \leq (S^{-1}\integers)[x_1,\ldots,x_n]$.
  If we increase $S$ to include the primes dividing the coefficients
  on the initial terms $(\init g_i)$, then we can rescale the $(g_i)$ to
  make their initial coefficients $1$, and insist that no 
  $\init g_i$ divides any $\init g_j$, $j\neq i$. 
  For any $p\notin S$, these properties hold also for $(g_i\bmod p)$.

  As observed above, $\init(I\bmod p)$ is a Stanley-Reisner ideal, $p\notin S$.
  Hence the initial monomials $\init (g_i \bmod p)$ are squarefree.
  So the initial monomials $\init g_i$ are themselves squarefree.
  This proves that away from the bad primes in $S$, 
  $\init I$ is a Stanley-Reisner ideal. In particular $\init I$, and $I$
  itself, are radical over $S^{-1} \integers$, as was to be shown.
\junk{
  The algorithm in corollary \ref{cor:allradical} only constructs
  finitely many subschemes $Y$, so all together there are only finitely many
  primes to avoid, which we do hereafter.
  ...
  Finally, the bijection $\AA^n \to \AA^n$ taking each 
  $Y$ into $\init Y$ (not just $\init_\ell Y$) is made by composing
  $\iota$ with ${\bf 1} \times \iota_H$, where $\iota_H$
  is the bijection of $H$ to itself already constructed by induction.
}

  Now take $\lambda$ to be the lexicographic weighting.
  In this case, $\init I = \init_n \init_{n-1} \cdots \init_1 I$,
  where $\init_j$ is defined using the weighting 
  $\lambda_j = (0,\ldots,0,1,0,\ldots,0)$. 

  For each $j$ in turn, 
  let $L$ be the $j$th coordinate line $\{(0,\ldots,0,*,0,\ldots,0)\}$, 
  and $H$ the complementary coordinate hyperplane.
  Define $\ell_f : H \to L$ by 
  $$ \ell_f(h) :=
  \begin{cases}
    \ell &\text{if a minimal $Y \subseteq \AA^n$ meeting $\{h\}\times L$
      meets it in the single point $(h,\ell)$} \\
    0 &\text{otherwise}
  \end{cases}
  $$
  where $Y$ ranges over the subschemes created by the algorithm.
  As there are only finitely many $Y$, this map is constructible.
  Then define the bijection $\iota_j : \AA^n \to \AA^n$ by 
  $$ \iota_j(h + \ell) := h + \ell -\ell_f(h). $$ 

  We claim that $\ell_f$ (and thus $\iota_j$) is well-defined. 
  First we need to be sure that it is defined everywhere. 
  For every $h$, {\em some} $Y$ meets and even contains $\{h\}\times L$, 
  namely $Y=\AA^n$. 
  If no $Y$ meets $\{h\}\times L$ in a finite scheme, then $\ell_f(h)=0$. 
  Otherwise let $Y_h$ be the union of the $Y$ that meet $\{h\}\times L$ 
  in a finite scheme. (Indeed, it is the unique largest such $Y$.)
  Then since $\init_j Y_h$ is reduced, by the (2)$\implies$(3) part
  of theorem \ref{thm:countdegen}, $Y_h$ intersects $\{h\}\times L$
  in a point, $(h,\ell_h)$. This also shows that the choice of
  minimal $Y$ does not matter, since any such $Y$ lies in $Y_h$.

\junk{
  We note for later use that $\{h : \ell_f(h)=0\}$ is closed in $H$.
  {is this true?}
}

  Now we claim that $\iota_j$ takes $Y$ into $\init_j Y$.
  This is because this $\iota$ agrees with the $\iota$ from
  theorem \ref{thm:countdegen} except possibly on 
  $\Lambda'\times L \subseteq Y$, 
  where it is merely shifted by an element of $L$.

  When we then define $\iota := \iota_n \circ \cdots \circ \iota_1$, 
  we learn inductively that $\iota(Y) \subseteq Y'$.
\junk{
  Now we use $\init_\ell f$ and induction to define a decomposition
  $H = \coprod_{\gamma\subseteq \{2,\ldots,n\}} H_\gamma$ of $H$
  that restricts to a decomposition of each of its $Y$. 
  We decompose each $H_\gamma \times L$ 
  into the graph of $\ell_f: H_\gamma \to L$ and its complement.

  To see that these are tori 
  (isomorphic to $H_\gamma$ and $H_\gamma \times \Gm$)
  we need to show that on each $H_\gamma$, 
  the map $\ell_f$ is regular (not just constructible).
  By induction we may assume that $H_\gamma$ is a torus,
  and in particular irreducible.

  There is an easy case, where each $Y$ that meets $H_\gamma \times L$
  contains it. Then $\ell_f \equiv 0$ on $H_\gamma$.

  Otherwise let $Y$ m
}
\end{proof}

It is quite unfortunate that the $\{\Lambda'\}$ from 
theorem \ref{thm:countdegen} are not always compatibly split in $H$, 
which is to say, one cannot stratify the varieties in
$\YY^{\not\supseteq \bullet\times L}_{\AA^n,f}$ using the varieties in
$\YY^{irr}_{H,g_1}$ (or more precisely, their 
$Y^\circ = Y \setminus \union_{Z\subsetneq Y} Z$). 
If this were true, one could use induction to give
$\AA^n$ a paving by tori, compatibly paving each $Y$, pulled back
using $\iota$ from the standard paving (and with $\iota$ regular on
each torus stratum).  The well-known example \cite{Deodhar} of such a
simultaneous paving suggests there should be a criterion guaranteeing that
the $\{\Lambda'\}$ are split, that would apply to the Kazhdan-Lusztig
case considered in section \ref{ssec:kl} and \cite{Deodhar}.
However, the subtle example at the end of section \ref{sec:pfcountdegen}
is of this type, making such a criterion difficult to imagine.

\begin{proof}[Proof of theorem \ref{thm:Grobner}]
  \begin{enumerate}
  \item By theorem \ref{thm:initprod}, $\init Y$ is Stanley-Reisner
    and reduced, hence its ideal can be generated by squarefree monomials.
    Lifting those to generators of $Y$'s ideal, we get the desired
    Gr\"obner basis.
  \item Set-theoretically, it is plain that the intersection of
    $\{Z \in \YY_f : Z \geq Y, Z$ basic in $\YY_f\}$ 
    is $Y$. But since this intersection is compatibly split, 
    it is reduced, so the equation holds scheme-theoretically.
  \item Since any intersection of compatibly split ideals is compatibly
    split, and by theorem \ref{thm:initprod} their initial ideals
    are radical, corollary \ref{cor:Grobner} from section \ref{sec:Trinit} 
    allows us to concatenate their Gr\"obner bases.
  \end{enumerate}
\end{proof}

We mention a property of $\pi_{f,\init}: \YY_{\init f} \onto \YY_f$:
for all $Y' \in \YY_{\init f}$, $\dim \pi_{f,\init}(Y') \geq \dim Y'$.
Proof: $Y' \subseteq \init \pi_{f,\init}(Y')$, and $\init$ preserves
dimension. The examples in figure \ref{fig:elliptic}
show that the inequality may be strict.

\begin{proof}[Proof of theorem \ref{thm:schemeversion}]
  As this is the only place in the paper that we consider Frobenius
  splittings on schemes rather than on affine space, we will not 
  build up all the relevant definitions, but assume the reader is
  familiar with \cite[section 1.3]{BK} and \cite{KumarMehta}.
  \begin{enumerate}
  \item The proof is exactly the same as in \cite[proposition 1.3.11]{BK},
    once one includes \cite[remark 1.3.12]{BK} to relax 
    nonsingularity to normality. Completeness enters as follows:
    the scheme-theoretic analogue of $\Tr(f^{p-1})$ is a global
    function on $X$, and the condition on $f$ ensures that
    $\Tr(f^{p-1})$ is locally of the form $1 +$ higher order. 
    But since $X$ is complete, normal, and irreducible (being a variety), 
    any global function on $X_{reg}$
    is constant, so the higher order terms vanish and $\Tr(\sigma^{p-1}) = 1$.
  \item Write $f = \prod_i t_i\ (1 + \sum_{e} c_{e} {\bf t}^{e})$,
    where the $\{e\}$ are exponent vectors, all entries $\geq -1$. 
    The condition on $f$ says that $\lambda \cdot e < 0$ for 
    each summand. Write $f^{p-1} 
    = \prod_i t_i^{p-1}\ (1 + \sum_{s} d_{s} {\bf t}^{s})$.
    The exponent vectors $s$ 
    are sums of $p-1$ of the vectors $\{e\}$.

    What can such an $s$ look like? It is integral, with all 
    entries $\geq 1-p$. Since $\lambda \cdot s < 0$, and each
    entry of $\lambda$ is $\geq 0$, there must be some entry of $s$
    in $(-p,0)$. Hence $s$ cannot be $p$ times another vector.

    Consequently the only term of 
    $\prod_i t_i^{p-1}\ (1 + \sum_{s} d_{s} {\bf t}^{s})$
    that contributes in $\Tr(f^{p-1})$ is the first one, 
    so $\Tr(f^{p-1})=1$ locally. By irreducibility, $\Tr(f^{p-1})=1$.
  \end{enumerate}
  In either case, the same argument from theorem \ref{thm:Trpointcount}
  guarantees that the splitting defined by $\sigma^{p-1}$ does
  compatibly split the divisor $\sigma=0$. Normality lets us extend
  the splitting from $X_{reg}$ to $X$ \cite[Lemma 1.1.7(iii)]{BK}.

  To see the uniqueness of the splitting, 
  the proof of \cite[proposition 2.1]{KumarMehta}
  shows that a section $\gamma$ of $\omega^{1-p}$ defining a Frobenius splitting
  on a nonsingular variety splits a divisor $\{\sigma = 0\}$ iff 
  $\gamma$ is a multiple of $\sigma^{p-1}$. By the assumption that
  $\{\sigma=0\}$ is anticanonical, this multiple must be by a global function.
  Completeness and irreducibility ensures this global function is a constant.  
\end{proof}

\section{Examples, theorem \ref{thm:kl},
  and proposition \ref{prop:multfreesplit}}\label{sec:examples}

In sections \ref{ssec:smalldim}-\ref{ssec:kl}
we investigate the condition $\init(f) = \prod_i x_i$ in examples,
sometimes using Macaulay 2 \cite{M2}. 
The most important family of examples is the Kazhdan-Lusztig varieties
in section \ref{ssec:kl}.
In section \ref{ssec:multfree} we mention a corollary
about Brion's ``multiplicity-free'' subvarieties of a flag manifold
\cite{BrionMultFree}.

\subsection{Small dimensions}\label{ssec:smalldim}

We begin with a general weighting $\lambda_1,\ldots,\lambda_n$ of the
variables, so $\prod x_i^{e_i}$ has weight $\sum \lambda_i e_i$. 
Write $\prod x_i^{e_i} \rhd \prod x_i^{f_i}$ if its
weight is greater.

Without loss of generality we may assume the $(\lambda_i)$ 
to be in decreasing order. 
Then for any two monomials $\prod x_i^{e_i}$ and $\prod x_i^{f_i}$,
if $\sum_{i\leq j} e_i \geq \sum_{i\leq j} f_i$ for all $j=1,\ldots,n$, 
we may be sure that $\prod x_i^{e_i} \rhd \prod x_i^{f_i}$.

Once we choose a specific $\lambda$ (generic enough that 
$\prod_i x_i$ does not have the same weight as any other monomial),
the condition that $\lt(f) = \prod_i x_i$ forces us to put 
coefficient $0$ on any $m$ with $m \rhd \prod_i x_i$. 
Different choices of $\lambda$ will lead to different sets of
allowed $m$, but as our interest is not in $\{\lambda\}$ but
in the set of varieties to which theorem \ref{thm:initprod}
applies, it is enough for us to consider the maximal allowed subsets
of monomials.

\subsubsection{n=2.} In this case the set of permitted monomials
is already uniquely specified: $\{x_1 x_2, x_2^2\}$. 
So the possible polynomials are $f = x_1 x_2 + c_{20} x_2^2$,
defining the two (distinct) points $\{[1,0], [-c_{20},1]\}$ in $\Pone$.

\subsubsection{n=3.} The order considerations so far already tell us 
which monomials $m$ have $m \rhd x_1 x_2 x_3$ and which have
$x_1 x_2 x_3 \rhd m$, except for $m = x_2^3$. 
One can find decreasing $\lambda$ for either condition
$x_2^3 \rhd x_1 x_2 x_3$ or $x_1 x_2 x_3 \rhd x_2^3$,
but since we only want to
consider the maximal cases, we ask that $x_1 x_2 x_3 \rhd x_2^3$,
which is achieved by e.g. $(\lambda_1,\lambda_2,\lambda_3) = (3,1,0)$.

Now we are considering plane curves of the form
$$ f = x_1 x_2 x_3 + c_{030} x_2^3+c_{021} x_2^2 x_3+c_{102} x_1 x_3^2
+c_{012} x_2 x_3^2+c_{003} x_3^3. $$
Being cubics and Frobenius split, they are elliptic
curves with at worst nodal singularities \cite[1.2.4--1.2.5]{BK}.
Each curve has $[1,0,0]$ as a singular point, so they are indeed nodal.

If we write $f$ as
$ f = x_1 (x_2 x_3 +c_{102} x_3^2)
+ c_{030} x_2^3+c_{021} x_2^2 x_3 +c_{012} x_2 x_3^2+c_{003} x_3^3 $
and degenerate as in proposition \ref{prop:gvdsplit}, 
we see $I_\Lambda = \langle x_3 \rangle \cap \langle x_2 +c_{102} x_3 \rangle$.
For generic values of the coefficients, $\Lambda' = \{[1,0,0]\}$.

Being cubics, these curves can only have two more nodes.
To find them we decompose
the ideal generated by $f$ and its derivatives, using the Macaulay 2 command

\centerline{\tt 
  decompose ((ideal f) + ideal diff(matrix $\{\{x_1, x_2, x_3\}\}$, f))}
\noindent
Then for each component $c$ of the possible singular set, 
we {\tt eliminate($\{x_1,x_2,x_3\}$,c)} to see for what $f$ the
singularities arise. The components turn out to be
$\{c_{003}=0\}$ and $\{c_{300} + c_{210}^2 c_{102}  
= c_{210} c_{201} + c_{210}^3 c_{003}\}$.

If $c_{003}=0$, the cubic breaks
into the $x=0$ line and a conic. 
%
If 
$c_{030}c_{102}^3+c_{012}c_{102}
=c_{021}c_{102}^2+c_{003}$,
the curve breaks into the 
$x_2 + c_{102}x_3 = 0$ line and a conic, and $\Lambda'$ contains the
new node.
%
If both are true,
the curve is a cycle of three lines.


\subsection{Matrix Schubert varieties}\label{ssec:matrixSchubs}

The \defn{matrix Schubert variety} $\barX_\pi \subseteq M_n$
is the closure of $B_- \pi B_+$ inside the space $M_n$ of all matrices,
where $B_+$ (respectively $B_-$) denotes the Borel group of upper 
(respectively lower) triangular matrices, and $\pi$ is a 
permutation matrix.
These varieties were introduced in \cite{Fulton92}, 
where their corresponding radical ideals $I_\pi$ were determined.
They have a couple of relations to flag manifold Schubert varieties; 
in particular $(\barX_\pi \cap GL(n))/B_+$ is 
the usual Schubert variety $X_\pi \subseteq GL(n)/B_+$.
Hence the codimension of $\barX_\pi$ in $M_n$ is
the Coxeter length $\ell(\pi)$.

It is easy to give examples of weightings $\lambda$ on the 
matrix coordinates $(x_{ij})$ such that 
for the determinant $d$ of any submatrix, $\init(d)$ is the product
of the entries on the antidiagonal (times a sign). In \cite{KM} we called these
\defn{antidiagonal term orders}, and showed that each $\init(I_\pi)$
is Stanley-Reisner over $\integers$. In this section we do this part
of \cite{KM} (though only over $\rationals$) much more easily using
theorem \ref{thm:initprod}.

Let $M = (m_{ij})_{i,j=1,\ldots,n}$ be an $n\times n$ matrix
of indeterminates, and let $d_{[i,j]}$ denote the determinant of
the submatrix consisting of rows and columns $i,i+1,\ldots,j$ from $M$.
Let
$$ f := \prod_{i=1}^{n-1} d_{[1,i]} \prod_{j=1}^n d_{[j,n]}. $$
This is homogeneous of degree 
$1+2+\ldots+(n-1)+n+(n-1)+\ldots+2+1 = n^2$. 
Let 
$$ d'_i := (-1)^{i-1} \prod_{j,k:\, j+k=i+1} m_{jk} $$
be the product of the $i$th antidiagonal of $M$ (with a not particularly
important sign), so
$$ \init(d_{[1,i]}) = d'_i, \quad
\init(d_{[j,n]}) = d'_{n+j-1} $$
and since the set of matrix entries is the union of the antidiagonals,
$$
  \init(f) 
  = \prod_{i=1}^{n-1} \init(d_{[1,i]}) \prod_{j=1}^n \init(d_{[j,n]}) 
  = \prod_{i=1}^{n-1} d'_i
  \prod_{j=1}^n d'_{n+j-1}
  = \prod_{i=1}^{2n-1} d'_i = \pm \prod_{i,j} m_{ij}. $$
Hence theorem \ref{thm:initprod} applies.

Next we apply the algorithm from corollary \ref{cor:allradical}
to the ideal $\langle f\rangle$. We will restrict to the components
$\{ \langle d_{[1,i]} \rangle \}$, which define exactly the 
matrix Schubert varieties $\barX_{r_i}$ associated to the simple reflections.
Each of these is $B_-\times B_+$-invariant under the left/right action,
and this invariance persists as we intersect, decompose, and repeat.

Copying \cite[theorem 2.3.1]{BK},
we claim that every matrix Schubert variety $\barX_\pi$ is
produced by this algorithm. The proof is by induction on the length of $\pi$;
we are given the $\ell(\pi)=1$ base case to start with. 
We need the combinatorial fact that
for any $\pi$ with $\ell(\pi)>1$, there exist at least two permutations
$\rho \neq \rho'$ covered by $\pi$ in the Bruhat order
\cite[lemma 10.3]{BGG}.
We know by induction that their matrix Schubert varieties 
have already been already been produced.
Now $\barX_\rho \cap \barX_{\rho'} \supseteq \barX_\pi$, and
$\dim \left(\,\barX_\rho \cap \barX_{\rho'}\right) < \dim \barX_\rho 
= \dim \barX_\pi + 1$, so $\barX_\pi$ must be a component of
$\barX_\rho \cap \barX_{\rho'}$. Therefore it too is produced
by the algorithm.

To apply theorem \ref{thm:Grobner}, we need to compute the basic
elements of $S_n$, shown in \cite{biGrassmannian} to be those $\pi$
such that $\pi,\pi^{-1}$ are each Grassmannian.\footnote{%
  We only need one, easy, direction. If $\pi$ has descents at both $i$
  and $j$, then $\pi r_i, \pi r_j$ both cover $\pi$ (in opposite
  Bruhat order), and it is easy to see that $\pi$ is their unique
  greatest lower bound.  Also, $\pi \mapsto \pi^{-1}$ is an automorphism
  of Bruhat order.  Hence $\pi$ basic implies $\pi,\pi^{-1}$ Grassmannian.}
For those $\pi$, Fulton's theorem \cite{Fulton92} states that
$\barX_\pi$ is defined by the vanishing of all $a\times a$ determinants 
in the upper left $b\times c$ rectangle, for $a,b,c$ determined by $\pi$ 
(the ``essential set'' of $\pi$ has only the box $\{(b,c)\}$).  
These determinants are already known to form a
Gr\"obner basis for any antidiagonal\footnote{%
  The reference \cite{Sturmfels90} uses diagonal term orders, but the
  ideal is symmetric in the rows, so we can reverse them.}  term order
\cite{Sturmfels90}.
Now part (2) of theorem \ref{thm:Grobner} recovers Fulton's presentation
of the ideals defining general $\barX_\pi$ \cite{Fulton92}, 
and part (3) recovers the main result of \cite{KM}, that Fulton's generators
form a Gr\"obner basis.

Note that while we used only $\prod_{i=1}^{n-1} d_{[1,i]}$ to produce
the matrix Schubert varieties, that polynomial wasn't of high enough
degree to give a splitting, and we needed to flesh it out to $f$.  
It is interesting to note that this function (not $f$) was already
enough to construct a Frobenius splitting on $GL_n \times^B \lie{b}$
in \cite[section 3.4]{MvdK},
when pulled back along $GL_n \times^B \lie{b} \onto \lie{gl}_n$.

Finally, we mention that the definition of $f$ generalizes easily to
the case of rectangular matrices, say $k\times n$ with $k\leq n$. 
Let $d_{[i_1,i_2]}^{[j_1,j_2]}$ denote the determinant of the 
submatrix using rows $[i_1,i_2]$ and columns $[j_1,j_2]$ (so the
previous $d_{[i,j]}$ is this $d_{[i,j]}^{[i,j]}$). Then take
$$ f := \prod_{i=1}^{k-1} d_{[1,i]} 
\prod_{i=1}^{n-k} d_{[1,k]}^{[i,i+k-1]}
\prod_{j=1}^k d_{[j,k]}^{[n-j+1,n]} $$
and the antidiagonal terms again exactly cover the matrix.

\newcommand\im{{\rm im\,}} 
\subsection{Kazhdan-Lusztig varieties}\label{ssec:kl}
This example requires a fair amount of standard Lie theory, in particular a
pinning $(G,B,B_-,T,\Delta_\pm,W)$ of a connected 
simply connected reductive algebraic group $G$.
A typical simple root will be denoted $\alpha \in \Delta_+$,
with corresponding simple reflection $r_\alpha \in W$,
subgroup $(SL_2)_\alpha$,
one-parameter unipotent subgroup $e_\alpha : \Ga \to B\cap (SL_2)_\alpha$,
and minimal parabolic $P_\alpha = (SL_2)_\alpha B$.
That group has the Bruhat decomposition 
$P_\alpha = B \coprod B \tilde r_\alpha B
= B \coprod (\im e_\alpha) \tilde r_\alpha B$,
where $\tilde r_\alpha$ is a lift of $r_\alpha$ to an element of
the corresponding $(SL_2)_\alpha \leq G$, chosen so that
$$ e_\alpha(c) \mapsto  \begin{pmatrix}   1&c\\ 0&1  \end{pmatrix}, \qquad 
\tilde r_\alpha \mapsto  \begin{pmatrix}   0&-1\\ 1&0  \end{pmatrix} $$
under some isomorphism of $(SL_2)_\alpha$ with $SL_2$,
taking $B_\pm \cap (SL_2)_\alpha$ to upper/lower triangular $2\times 2$ 
matrices.

For $w\in W$ a Weyl group element,
let $X_w^\circ := {B_- wB}/B \subseteq G/B$ and $X_w := \overline{X_w^\circ}$ 
be the associated \defn{Bruhat cell} and \defn{Schubert variety}, 
of codimension $\ell(w)$ (the length of $w$ as a Coxeter element).
Define also the \defn{opposite Bruhat cell} $X^v_\circ := {B v B}/B$
and \defn{opposite Schubert variety} $X^v := \overline{X_v^\circ}$,
of dimension $\ell(v)$.\footnote{%
  As our personal interest is most often in the cohomology classes
  associated to Schubert varieties, we prefer to privilege the codimension
  over the dimension, dictating our convention of Schubert/opposite Schubert.
  This will have the drawback later that containment order on Schubert
  varieties, relevant for computing the ``basic'' elements, 
  is opposite Bruhat order.}
It is known \cite[theorem 2.3.1]{BK} that $G/B$ possesses a Frobenius splitting
(meaning, on its structure sheaf of rings) that compatibly splits
all its Schubert varieties and opposite Schubert varieties.

Consequently, each $X^v_\circ$ is Frobenius split, 
compatibly splitting the \defn{Kazhdan-Lusztig varieties}
$X^v_{w\circ} := X_w \cap X^v_\circ$.
In \cite{schubDegen} we showed that each Kazhdan-Lusztig variety has
a flat degeneration to a Stanley-Reisner scheme, using a sequence
of different coordinate systems. 
In what follows we will derive both of these results 
from theorem \ref{thm:initprod}, using a single identification 
of $X^v_\circ$ with $\AA^{\ell(v)}$.

As in \cite{schubDegen}, these coordinates will depend
on a \defn{reduced word $Q$ for $v$,} 
i.e. a list $(\alpha_1,\ldots,\alpha_{\ell(v)})$ of simple roots
such that $r_{\alpha_1} \cdots r_{\alpha_{\ell(v)}} = v$. 
Associated to $Q$ is a \defn{Bott-Samelson manifold} 
$BS^Q := P_{\alpha_1} \times^B \cdots \times^B P_{\alpha_{\ell(v)}}/B$
and birational map $\beta_Q: BS^Q \onto X^v$, taking 
$[p_1,\ldots,p_{\ell(v)}] \mapsto \left( \prod_{i=1}^{\ell(v)} p_i\right) B/B$.
In particular, we can use $\beta_Q$ to define an isomorphism
$$ \AA^{\ell(v)} \to X^v_\circ, \qquad
(c_1,\ldots,c_{\ell(v)}) 
\mapsto \left( \prod_{i=1}^{\ell(v)} (e_{\alpha_i}(c_i) \tilde r_{\alpha_i})
\right) B/B. $$

Define $\tilde\beta_Q: \AA^{\ell(v)} \to G$ by
$\tilde\beta_Q(c_1,\ldots,c_{\ell(v)}) 
= \prod_{i=1}^{\ell(v)} (e_{\alpha_i}(c_i) \tilde r_{\alpha_i})$.
For $\lambda$ a dominant weight of $G$, pick $\vec v_\lambda$ a high
weight vector of the irrep $V_\lambda$ (with highest weight $\lambda$), 
and $\vec v_{-\lambda}$ a low weight vector of $(V_\lambda)^*$. 
We can scale $\vec v_{\pm\lambda}$ to ensure 
$\langle \vec v_{-\lambda}, \vec v_\lambda \rangle = 1$.
Define $m_\lambda : G \to \AA^1$ by 
$$ m_\lambda(g) := 
\langle \vec v_{-\lambda}, g\cdot \vec v_\lambda \rangle. $$
It is easy to see that $m_\lambda$ does not depend on the choices of
$\vec v_{\lambda}, \vec v_{-\lambda}$. If $\lambda$ is a fundamental
weight $\omega$ (as indeed it will be), this is one of the
``generalized minors'' defined by Fomin and Zelevinsky; in particular,
if $\omega_i$ is the $i$th fundamental weight of $SL_n$, then
$m_{\omega_i}$ is the determinant of the upper left $i\times i$ submatrix.

\begin{Lemma}\label{lem:momega}
  Let $\alpha$ be a simple root and $\omega$ the corresponding
  fundamental weight. Let $\lambda$ be a dominant weight,
  so $\langle \alpha,\lambda \rangle \geq 0$.
  \begin{enumerate}
  \item The divisor $m_\omega = 0$ in $G$ is the preimage of
    the Schubert divisor $X_{r_\alpha} \subseteq G/B$.
  \item If $\langle \alpha,\lambda \rangle = 0$, then 
    $m_\lambda(e_{\alpha}(c) \tilde r_\alpha g) = m_\lambda(g)$ for
    all $g\in G, c\in \Ga$.
  \item If $\langle \alpha,\lambda \rangle = 1$, then
    $m_\lambda(e_{\alpha}(c)\, \tilde r_\alpha g) 
    = c\, m_\lambda(g) + m_\lambda(\tilde r_\alpha g)$ 
    for all $g\in G, c\in \Ga$.
  \end{enumerate}
\end{Lemma}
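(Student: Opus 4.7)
Parts (2) and (3) are best handled by restricting the $G$-representation $V_\lambda$ to the subgroup $(SL_2)_\alpha$. Since $v_\lambda$ is annihilated by $e_\alpha$ and has $(SL_2)_\alpha$-weight $\langle \alpha,\lambda\rangle$, it generates an $(SL_2)_\alpha$-irreducible summand $V^{(2)}$ of dimension $\langle \alpha,\lambda\rangle + 1$; by complete reducibility fix an $(SL_2)_\alpha$-invariant complement $W'$ with $V_\lambda = V^{(2)} \oplus W'$. Since $v_{-\lambda}$ vanishes on every weight space of $V_\lambda$ other than $\complexes v_\lambda$ (and in particular kills $W'$), the scalar $m_\lambda(g)$ is precisely the $v_\lambda$-coefficient of $gv_\lambda$ in any weight decomposition. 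Part (1), by contrast, is most naturally proved by factoring $m_\omega$ through a Pl\"ucker-type embedding.

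For part (2) we have $V^{(2)} = \complexes v_\lambda$, the trivial $(SL_2)_\alpha$-representation, so $e_\alpha(c)\tilde r_\alpha$ fixes $v_\lambda$. Writing $gv_\lambda = a v_\lambda + w'$ gives $e_\alpha(c)\tilde r_\alpha g v_\lambda = a v_\lambda + e_\alpha(c)\tilde r_\alpha w'$, whose pairing with $v_{-\lambda}$ is still $a = m_\lambda(g)$. For part (3), $V^{(2)}$ is the standard two-dimensional $(SL_2)_\alpha$-representation; define $v'_\lambda := \tilde r_\alpha v_\lambda$, a weight-$(\lambda - \alpha)$ vector. With the matrix conventions fixed in the paper, one reads off $\tilde r_\alpha v'_\lambda = -v_\lambda$, $e_\alpha(c) v_\lambda = v_\lambda$, and $e_\alpha(c) v'_\lambda = v'_\lambda + c v_\lambda$. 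Decomposing $g v_\lambda = a v_\lambda + b v'_\lambda + w'$ and applying $\tilde r_\alpha$ then $e_\alpha(c)$ yields $e_\alpha(c)\tilde r_\alpha g v_\lambda = (ac - b)\,v_\lambda + a\,v'_\lambda + e_\alpha(c)\tilde r_\alpha w'$; reading off $v_\lambda$-coefficients, $m_\lambda(g) = a$, $m_\lambda(\tilde r_\alpha g) = -b$, and the identity $m_\lambda(e_\alpha(c)\tilde r_\alpha g) = ac - b = c\,m_\lambda(g) + m_\lambda(\tilde r_\alpha g)$ drops out.

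For part (1), let $P_{\widehat{\alpha}}$ denote the maximal parabolic stabilizing the line $[v_\omega] \subseteq V_\omega$; its Weyl group $W_{P_{\widehat{\alpha}}}$ is generated by all simple reflections except $r_\alpha$. The composition $G/B \twoheadrightarrow G/P_{\widehat{\alpha}} \hookrightarrow \PP(V_\omega)$ pulls $m_\omega$ back (up to $B$-equivariance) from the section of $\calO(1)$ cut out by $v_{-\omega}$. The nonvanishing locus of this section on $G/P_{\widehat{\alpha}}$ is the open $B_-$-orbit $U_- \cdot [v_\omega]$, so its zero locus is the unique Schubert divisor of $G/P_{\widehat{\alpha}}$, whose scheme-theoretic preimage in $G/B$ is an irreducible divisor containing $X_{r_\alpha}$ (since $B_- r_\alpha P_{\widehat{\alpha}} \supseteq B_- r_\alpha B$) and therefore equal to $X_{r_\alpha}$ by dimension count. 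The main obstacle throughout is sign- and normalization-bookkeeping: verifying that the paper's specific matrix lifts produce exactly the $ac - b$ in (3), and confirming that the Pl\"ucker pullback in (1) yields a reduced irreducible divisor so that the comparison with $X_{r_\alpha}$ applies.
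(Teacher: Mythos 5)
Your parts (2) and (3) use the same idea as the paper's proof --- reduce to $(SL_2)_\alpha$-representation theory on the $\alpha$-string through the relevant extremal weight --- but you compute in $V_\lambda$ where the paper computes in $(V_\lambda)^*$; the paper moves $(e_\alpha(c)\tilde r_\alpha)^{-1}$ onto $\vec v_{-\lambda}$ via the dual action and never needs to decompose $g\vec v_\lambda$. The one genuine gap is your appeal to complete reducibility to produce an $(SL_2)_\alpha$-invariant complement $W'$: this paper works in characteristic $p$ (that is the whole point of Frobenius splitting), where $(SL_2)_\alpha$-modules are not semisimple, so such a complement need not exist. The fix is cheap --- take $W'$ to be the sum of $T$-weight spaces of weight $\neq \lambda$ (resp.\ $\neq \lambda,\lambda-\alpha$), which is automatically a complement since $\lambda$ and, when $\langle\alpha,\lambda\rangle=1$, also $\lambda-\alpha$ are extremal and hence have one-dimensional weight spaces. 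Then $\langle \vec v_{-\lambda}, e_\alpha(c)\tilde r_\alpha w'\rangle = \langle (e_\alpha(c)\tilde r_\alpha)^{-1}\vec v_{-\lambda}, w'\rangle$ vanishes because the left slot lives in weights $\{-\lambda,-\lambda+\alpha\}$ and $w'$ has no component in the dual weights $\{\lambda,\lambda-\alpha\}$. No $(SL_2)_\alpha$-invariance of $W'$ is needed. Your sign bookkeeping and the resulting identity $ac-b = c\,m_\lambda(g) + m_\lambda(\tilde r_\alpha g)$ are correct given the paper's matrix conventions.

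For part (1) your route is genuinely different from the paper's. The paper observes that $\{m_\lambda=0\}$ is a $B_-\times B$-semi-invariant divisor descending to a $\ZZ_{\geq 0}$-combination $D$ of Schubert divisors in $G/B$, and pins down $D$ by intersecting with the opposite Schubert curve $X^{r_\alpha}$, getting the coefficient $\langle\alpha,\lambda\rangle$; thus $D=X_{r_\alpha}$ exactly when $\lambda=\omega$ (and as a byproduct one sees the multiplicity for general dominant $\lambda$). You instead factor $m_\omega$ through $G/P_{\widehat\alpha}\hookrightarrow\PP(V_\omega)$ and identify the zero locus with the unique Schubert divisor downstairs, then pull back. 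Both work; the paper's argument is shorter and gives the multiplicity for free, while yours is more self-contained geometrically. You correctly flag that you still owe the reducedness of the Pl\"ucker pullback; this can be supplied by the same curve-intersection computation the paper does, so the two proofs are not fully independent once one fills that in.
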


\begin{proof}
  \begin{enumerate}
  \item It is easy to see that the function $m_\lambda$
    is invariant up to scale under the left/right action of $B_- \times B$.
    So the divisor $m_\lambda=0$ is the preimage of some $B_-$-invariant 
    divisor $D$ on $G/B$, necessarily some linear combination of
    the Schubert divisors. The coefficient of $X_{r_\alpha}$ in $D$
    can be determined by restricting the class of $D$ to the opposite
    Schubert curve $X^{r_\alpha}$, and turns out to be 
    $\langle \alpha, \lambda \rangle$. In particular, we get
    $D = X_{r_\alpha}$ exactly if $\lambda = \omega$.
  \item 
    $$ m_\lambda(e_{\alpha}(c) \tilde r_\alpha g) 
    = \langle \vec v_{-\lambda}, 
    e_{\alpha}(c) \tilde r_\alpha g \cdot \vec v_\lambda \rangle
    = \langle (e_{\alpha}(c) \tilde r_\alpha)^{-1}\cdot \vec v_{-\lambda}, 
    g\cdot \vec v_\lambda \rangle $$
    The condition on $\lambda$ says that $\vec v_{-\lambda}$ is a weight vector
    not only for $B_-$ but for the opposite minimal parabolic $P_{-\alpha}$.
    Then use the fact that $e_{\alpha}(c), \tilde r_\alpha$ are
    elements of the commutator subgroup of $P_{-\alpha}$ (indeed, of 
    $(SL_2)_\alpha$) to see that 
    $(e_{\alpha}(c)\tilde r_\alpha)^{-1}\cdot \vec v_{-\lambda} 
    = \vec v_{-\lambda}$,
    hence
    $$ \langle (e_{\alpha}(c) \tilde r_\alpha)^{-1} \cdot \vec v_{-\lambda}, 
    g\cdot \vec v_\lambda \rangle
    = \langle \vec v_{-\lambda}, g\cdot \vec v_\lambda \rangle = m_\lambda(g).$$
  \item The condition on $\lambda$ tells us that 
    $r_\alpha \cdot (-\lambda) = (-\lambda) + \alpha$, i.e.
    the $\alpha$-string through $-\lambda$ in $(V_\lambda)^*$
    is $\{-\lambda, -\lambda + \alpha\}$.
    Hence the representation of $(SL_2)_\alpha$ on the sum of
    these two extremal (hence $1$-dimensional) 
    weight spaces is isomorphic to the defining representation,
    in which
    $$ e_{\alpha}(-c) \cdot \vec v_{-\lambda} 
    = c \tilde r_\alpha \vec v_{-\lambda} + \vec v_{-\lambda} . $$
    Hence 
    \begin{align*}
      m_\lambda(e_{\alpha}(c) \tilde r_\alpha g) 
      &= \langle \vec v_{-\lambda}, 
      e_{\alpha}(c) \tilde r_\alpha g \cdot \vec v_{-\lambda} \rangle \\
      &= \langle e_{\alpha}(-c)  \vec v_{-\lambda}, 
      \tilde r_\alpha g \cdot \vec v_{-\lambda} \rangle \\
      &= \langle c \tilde r_\alpha \vec v_{-\lambda} + \vec v_{-\lambda},
      \tilde r_\alpha g \cdot \vec v_{-\lambda} \rangle \\
      &= \langle c \tilde r_\alpha \vec v_{-\lambda},
      \tilde r_\alpha g \cdot \vec v_{-\lambda} \rangle 
      + \langle \vec v_{-\lambda},
      \tilde r_\alpha g \cdot \vec v_{-\lambda} \rangle \\      
      &= c \langle \vec v_{-\lambda},
      g \cdot \vec v_{-\lambda} \rangle
      + \langle \vec v_{-\lambda},
      \tilde r_\alpha g \cdot \vec v_{-\lambda} \qquad 
      &= c m_\lambda( g ) + m_\lambda( \tilde r_\alpha g ).
    \end{align*}
    \end{enumerate}
\end{proof}

Let $I_w^Q$ denote the ideal in $\rationals[c_1,\ldots,c_{\ell(v)}]$
corresponding to $X_{w\circ}^v$, by pulling back the ideal defining
$\overline{B_- w B_+} \subseteq G$ 
along the map $\tilde\beta_Q : \AA^{\ell(v)} \to G$. 
In the $G = GL_n$ case, it is easy to find generators for $I_w^Q$,
as follows.
Fulton's theorem \cite{Fulton92} gives generators (a collection of
minors) for the defining ideals of $(B_-\times B_+)$-orbit closures in $M_n$, 
and therefore also in $GL_n$. Pulling these back along $\tilde\beta_Q$, 
we obtain generators for $I_w^Q$. While Fulton's generators were shown
to be a Gr\"obner basis in \cite{KM}, it is not obvious
that their images in $I_w^Q$ are again a Gr\"obner basis
(a case of which is treated in \cite{WooYong}).
In any case the following theorem does not assume $G=GL_n$.

\begin{Theorem}\label{thm:kl}
  Fix $v\in W$, and a reduced word $Q$ for $v$. 
  Then the function $f$ on $\AA^{\ell(v)}$ defined by
  $$ f(c_1,\ldots,c_{\ell(v)}) :=
  \prod_\omega m_\omega(\tilde\beta_Q(c_1,\ldots,c_{\ell(v)}))\qquad
  \text{$\omega$ ranging over $G$'s fundamental weights} $$
  is of degree $\ell(v)$, and its lex-initial term is
  $\prod_i c_i$.

  Under the identification of $\AA^{\ell(v)}$ with $X^v_\circ$,
  the divisor $f=0$ is the preimage of $\Union_\alpha X_{r_\alpha}$.
  By decomposing and intersecting repeatedly, we can produce
  all the other $X_{w\circ}^v$ from this divisor. 
  If $I_w^Q$ is the ideal in $\rationals[c_1,\ldots,c_{\ell(v)}]$
  corresponding to $X_{w\circ}^v$, then $\init I_w^Q$ is Stanley-Reisner.

  We can produce a Gr\"obner basis for $I_w^Q$ by concatenating
  Gr\"obner bases for $I_{w'}^Q$, with $w' \leq w$ in Bruhat order,
  and $w'$ basic in {\em opposite} Bruhat order on $W$.
  (The basic elements of Bruhat orders were computed 
  in \cite{biGrassmannian,GeckKim}.)
\end{Theorem}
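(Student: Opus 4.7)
The proof follows the template of \S\ref{ssec:matrixSchubs}: compute $\init f$ using Lemma \ref{lem:momega}, identify $\{f=0\}$ as a union of Kazhdan-Lusztig divisors, run the algorithm of Corollary \ref{cor:allradical} to produce all $X^v_{w \circ}$, and then apply Theorems \ref{thm:initprod} and \ref{thm:Grobner}.

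For the initial-term computation, fix a fundamental weight $\omega$ and let $\alpha_\omega$ denote the simple root to which $\omega$ is dual, so that Lemma \ref{lem:momega}(3) applies to $(\alpha_1,\omega)$ precisely when $\alpha_1 = \alpha_\omega$ and (2) applies otherwise. With $Q' = (\alpha_2,\ldots,\alpha_N)$ and $N = \ell(v)$, this gives
$$ m_\omega(\tilde\beta_Q) \;=\; \begin{cases} c_1\, m_\omega(\tilde\beta_{Q'}) + m_\omega(\tilde r_{\alpha_1}\, \tilde\beta_{Q'}) & \text{if } \alpha_1 = \alpha_\omega, \\ m_\omega(\tilde\beta_{Q'}) & \text{if } \alpha_1 \neq \alpha_\omega. \end{cases} $$
Induction on $N$ then shows that $m_\omega \circ \tilde\beta_Q$ is multilinear in $\{c_i : \alpha_i = \alpha_\omega\}$ with lex-initial term (using $c_1 > c_2 > \cdots > c_N$) equal to $\prod_{i : \alpha_i = \alpha_\omega} c_i$; in the first branch the summand $m_\omega(\tilde r_{\alpha_1}\tilde\beta_{Q'})$ involves no $c_1$ and is hence strictly lex-dominated by $c_1$ times the inductive leading term. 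Since the index sets $\{i : \alpha_i = \alpha_\omega\}$ partition $\{1,\ldots,N\}$ as $\omega$ varies over the fundamental weights, multiplying yields $\deg f = N$ and $\init f = \prod_{i=1}^N c_i$.

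Next, Lemma \ref{lem:momega}(1) identifies the zero divisor of $m_\omega \circ \tilde\beta_Q$ on $\AA^{\ell(v)} \cong X^v_\circ$ with the Kazhdan-Lusztig divisor $X^v_{r_{\alpha_\omega}\circ}$, so the irreducible components of $\{f=0\}$ are exactly the $X^v_{r_\alpha \circ}$ for $\alpha$ in the support of $v$. Theorem \ref{thm:initprod} supplies the Frobenius splitting $\Tr(f^{p-1}\bullet)$ for all but finitely many $p$ (and over $\rationals$), so every intersection encountered in the algorithm is compatibly split, hence reduced. Since $X_{w'} \cap X_{w''} = \bigcup_{u \geq w',w''} X_u$ set-theoretically, reducedness upgrades this to a scheme-theoretic equation, and the components of $X^v_{w'\circ} \cap X^v_{w''\circ}$ are exactly $X^v_{u\circ}$ for $u$ minimal among common Bruhat upper bounds of $w',w''$. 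I then induct on $\ell(w)$ exactly as in \cite[Theorem 2.3.1]{BK}: for $\ell(w) \geq 2$, pick two distinct elements $w', w'' \lessdot w$ (a standard Coxeter-group fact); both are $\leq v$ and produced by induction, while $w$ is a minimal common upper bound because any shorter one would have length $\ell(w)-1$ and would be forced to equal both $w'$ and $w''$, contradicting $w' \neq w''$.

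Having produced every $X^v_{w \circ}$ as a compatibly split subvariety, Theorem \ref{thm:initprod} gives that each $\init I_w^Q$ is Stanley-Reisner over $\rationals$, and Theorem \ref{thm:Grobner}(3) delivers the concatenated Gröbner basis once the basic elements of the poset $\YY_f$ (ordered by inclusion) are identified with the basic elements of opposite Bruhat order below $v$, via $X^v_{w\circ} \supseteq X^v_{w'\circ} \iff w \leq w'$; these are the biGrassmannian elements by \cite{biGrassmannian,GeckKim}. The main obstacle I anticipate is the inductive lex-leading-term bookkeeping in the initial-term calculation; the unlocking observation is that the extra summand $m_\omega(\tilde r_{\alpha_1} \tilde\beta_{Q'})$ arising in the $\alpha_1 = \alpha_\omega$ branch is $c_1$-free and therefore cannot interfere with the $c_1$-containing first summand in lex order. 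A minor secondary ingredient is the standard Coxeter-group fact that every element of length $\geq 2$ covers at least two distinct elements.
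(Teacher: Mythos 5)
Your proposal takes essentially the same route as the paper: the same induction via Lemma~\ref{lem:momega} to compute $\init(m_\omega\circ\tilde\beta_Q)$ (the key observation that the extra summand $m_\omega(\tilde r_{\alpha_1}\tilde\beta_{Q'})$ is $c_1$-free and hence lex-dominated is exactly what the paper packages as $\init_{c_1}$), the same identification of $\{f=0\}$ with $\Union_\alpha X^v_{r_\alpha\circ}$, the same ``components of intersections of covers'' induction borrowed from \cite[Theorem~2.3.1]{BK} as in \S\ref{ssec:matrixSchubs}, and the same invocations of Theorems~\ref{thm:initprod} and~\ref{thm:Grobner}.

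One caution on the side claim you make along the way: you assert that the induction also shows $m_\omega\circ\tilde\beta_Q$ is \emph{multilinear} in $\{c_i:\alpha_i=\alpha_\omega\}$ and from this conclude $\deg f=\ell(v)$. The inductive hypothesis only covers $m_\omega(\tilde\beta_{Q'})$; the extra summand $m_\omega(\tilde r_{\alpha_1}\tilde\beta_{Q'})$ is not of the form $m_\omega(\tilde\beta_{Q''})$, so the induction as set up does not bound its degree. And even granting multilinearity in that subset of variables, it says nothing about the degree in the $c_j$ with $\alpha_j\ne\alpha_\omega$, which is what you would actually need to get $\deg(m_\omega\circ\tilde\beta_Q)\le|\{i:\alpha_i=\alpha_\omega\}|$. (To be fair, the paper's own proof establishes only the $\init$ claim and leaves the degree assertion — which is needed to apply Theorem~\ref{thm:initprod} as stated — without separate justification, so this is a shared loose end rather than a new error; but the ``multiplying yields $\deg f=N$'' step should not be presented as following from the displayed recursion.)
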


\begin{proof}
  More specifically, we claim that for each $\omega$,
  $\init\, m_\omega(\tilde\beta_Q(c_1,\ldots,c_{\ell(v)}))
  = \prod_{i:\, \langle \omega, \alpha_i \rangle = 1} c_i$.
  (Recall that for each $\omega$, there exists a unique $\alpha$ with
  $\langle \omega, \alpha \rangle \neq 0$, and in that case
  $\langle \omega, \alpha \rangle = 1$. So the product of these
  $\prod_{i:\, \langle \omega, \alpha_i \rangle = 1} c_i$
  will be the desired $\prod_i c_i$.)

  This is proven by induction on $\ell(v)$, as follows.
  Let $Q = \alpha_1 Q'$, where $Q'$ is therefore a reduced word for
  $v' := r_{\alpha_1} v < v$.
  If $\langle \omega, \alpha_1 \rangle = 0$, then by 
  lemma \ref{lem:momega} part (2), 
  $$ m_\omega(\tilde\beta_Q(c_1,c_2,\ldots,c_{\ell(v)}))
  = m_\omega(\tilde\beta_{Q'}(c_2,\ldots,c_{\ell(v)})). $$
  If $\langle \omega, \alpha_1 \rangle = 1$, then by 
  lemma \ref{lem:momega} part (3), 
  $$ \init_{c_1} m_\omega(\tilde\beta_Q(c_1,c_2,\ldots,c_{\ell(v)}))
  =  c_1 m_\omega(\tilde\beta_{Q'}(c_2,\ldots,c_{\ell(v)})) $$
  (where $\init_{c_1}$ is defined as in the proof of
  proposition \ref{prop:algdegen}).
  So $\init m_\omega(\tilde\beta_Q(c_1,c_2,\ldots,c_{\ell(v)}))$
  is either $\init m_\omega(\tilde\beta_{Q'}(c_2,\ldots,c_{\ell(v)}))$
  or $c_1$ times that, and chaining these together,
  we get that $\init\, m_\omega(\tilde\beta_Q(c_1,\ldots,c_{\ell(v)})) =
  \prod_{i:\, \langle \omega, \alpha_i \rangle = 1} c_i$.

  The identification $\AA^{\ell(v)} \to X^v_\circ$ is given by
  $(c_1,\ldots,c_{\ell(v)}) 
  \mapsto \tilde\beta_Q(c_1,\ldots,c_{\ell(v)})\cdot B/B$.
  By lemma \ref{lem:momega} part (1), the preimage of $X_{r_\alpha}$
  is given by $m_\omega(\tilde\beta_Q(c_1,\ldots,c_{\ell(v)})) = 0$.
  Hence the preimage of $\Union_\alpha X_{r_\alpha}$ is given by $f=0$.

  Just as in the case of matrix Schubert varieties, 
  we can obtain all $X_w$ by intersecting/decomposing from 
  $\Union_\alpha X_\alpha$, because each $X_w$ is a component of
  $\bigcap_{w' < w} X_{w'}$. Nothing changes when we intersect with $X^v$
  (essentially because $w\leq v$ and $w'<w$ so $w'<v$; the necessary
  $\{X_{w'\circ}^v\}$ are thus once again available by induction).

  Now apply theorem \ref{thm:initprod}, to see that each
  $\init\, I_w^Q$ is Stanley-Reisner over $\rationals$.
  
  For the Gr\"obner basis statement, we use theorem \ref{thm:Grobner},
  noting that opposite Bruhat order is the relevant one for
  containment of Schubert varieties.
  Every basic element of $\{w' : w' \leq v\}$ is basic in the 
  opposite Bruhat order. Some of the basic $w'$ for the opposite
  Bruhat order may not be basic for this subposet, but adding them to
  the Gr\"obner basis does no harm.
\end{proof}

Theorem \ref{thm:kl} shows that $I_w^Q$ has a Gr\"obner basis 
whose leading terms are squarefree, but does not fully determine it
(except for $w = r_{\alpha_i}$), nor does it even determine the
leading terms, which generate the initial ideal.  
We determined this initial ideal in \cite{schubDegen}; 
it is the Stanley-Reisner ideal of
the ``subword complex'' $\Delta(Q,w)$ of \cite{subword}.
The map $\pi_{f,\init}$ defined in theorem \ref{thm:frobdegen} part (3),
from the set of coordinate spaces in $\AA^n$ to the set of
compatibly split subvarieties of $X^v_\circ$, is just the
map taking a subword of $Q$ to its Demazure/nil Hecke product.
Then the order-preserving property of $\pi_{f,\init}$ is a standard
characterization of the opposite Bruhat order in terms of existence of subwords.

As in \cite{Deodhar}, this result and its proofs
are the same if $G$ is taken to be a Kac-Moody Lie group;
even though $G$ is infinite-dimensional, $X^v_\circ$ is still
only $\ell(v)$-dimensional. Unfortunately, it does {\em not} thereby apply 
to the varieties $X_1^\circ \cap X^v$ 
when the big cell $X_1^\circ$ is infinite-dimensional.
This is a shame, as these varieties include 
nilpotent orbit closures \cite{Lusztig}, as nicely recounted in \cite{Magyar}.

We compute a sample $f$, where $Q = r_1 r_2 r_3 r_2$
is a reduced word for $v = 2431$ in the Weyl group $S_4$.
Then $m_{\omega_i} :G\to \AA^1$ is the upper left
$i\times i$ minor, and $\tilde\beta_Q(c_1,c_2,c_3,c_4)$ is
$$
\begin{bmatrix}
  c_1&-1 &   &   \\
   1 & 0 &   &   \\
     &   & 1 &   \\
     &   &   & 1 
\end{bmatrix}
\begin{bmatrix}
  1  &   &   &   \\
     &c_2&-1 &   \\
     & 1 & 0 &   \\
     &   &   & 1 
\end{bmatrix}
\begin{bmatrix}
  1  &   &   &   \\
     & 1 &   &   \\
     &   &c_3&-1 \\
     &   & 1 & 0 
\end{bmatrix}
\begin{bmatrix}
  1  &   &   &   \\
     &c_4&-1 &   \\
     & 1 & 0 &   \\
     &   &   & 1 
\end{bmatrix}
=
\begin{bmatrix}
  c_1&c_3-c_2 c_4&c_2 &-1  \\
  1  & 0 &0 &0  \\
   0& c_4 &-1 &0  \\
   0 & 1 & 0 &0 
\end{bmatrix},
$$
with $m_{\omega_1} = c_1$, $m_{\omega_2} = c_2c_4 - c_3$, and
$m_{\omega_3} = c_3$. Note that they need not be homogeneous;
homogeneity should only be expected when $v$ is $321$-avoiding,
as that is the condition (more generally known as ``$\lambda$-cominuscule'';
see e.g. \cite{Stembridge})
for the $T$-action on $X^v_\circ$ to contain the scaling action.\footnote{%
  {\em Proof.} 
  The $T$-weights on $X^v_\circ$ are $\{x_i-x_j : i<j, \pi(i)>\pi(j)\}$.
  $\pi$ contains the pattern $321$ iff $\exists i < j < k$ 
  with $\pi(i)>\pi(j)>\pi(k)$.
  When that is the case, the weights on $X^v_\circ$ include
  $x_i-x_j,x_j-x_k,x_i-x_k$, and no linear functional on $T^*$ can
  take value $1$ on all three. 
  If $\pi$ is $321$-avoiding, let $\Gm\to T$ take $z$ to the diagonal
  matrix $D$ with $D_{ii} = z$ if $\exists j>i$, $\pi(j)<\pi(i)$ 
  and $D_{ii} = 1$ otherwise; this $\Gm$ acts by dilation on $X^v_\circ$.}

\begin{figure}[h]
  \centering
  \epsfig{file=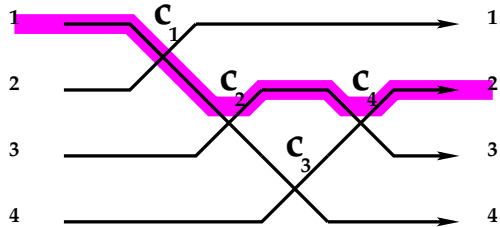,height=1.2in}
  \caption{A graphical way to compute $\tilde \beta_Q(c_1,c_2,c_3,c_4)$, 
    using the wiring diagram of $Q=1232$. Each path from $i$ to $j$
    contributes a term to the $(i,j)$ entry, with a factor of $-1$ for
    each step down and a factor of $c_i$ for each avoidance of a $\times$
    in favor of going through the $c_i$ atop it. The path pictured contributes
    $-1 * c_2 * c_4$ to the $(1,2)$ entry. We invite the reader to redo
    the matrix calculation above using this diagram.}
  \label{fig:wiring}
\end{figure}

Matrix Schubert varieties in $M_n$ are in fact Kazhdan-Lusztig
varieties in $GL_{2n}/B_{2n}$, a fact Fulton used in \cite{Fulton92}
to show that matrix Schubert varieties are normal, Cohen-Macaulay,
and have rational singularities. 
Taking $Q$ to be the ``square word'' 
$$ Q = (r_n r_{n-1} \cdots r_2 r_1)(r_{n+1} r_n \cdots r_3 r_2) 
\cdots (r_{2n-1} \cdots r_n) $$
in $S_{2n}$ from \cite[example 5.1]{subword}, a straightforward computation 
yields
$$ \tilde\beta_Q(c_{n,1},c_{n-1,1},\ldots,c_{1,1},\ \ c_{n,2},\ldots, c_{1,2},
\ \ \ldots,\ \ c_{n,n},\ldots,c_{1,n}) = 
\begin{bmatrix}
  C \cdot D
  & (-1)^n I_n \\
  I_n & 0_n
\end{bmatrix}
$$
where $C$ is the matrix of indeterminates $c_{ij}$, 
the matrices $I_n,0_n$ are the identity and zero matrices of size $n$, 
and $D$ is the diagonal matrix with alternating signs $D_{ii} = (-1)^{i-1}$.
Then the $2n-1$ minors $m_{\omega_i}$ are, up to signs, 
the $(d_{[1,i]})$ and $(d_{[i,n]})$ considered in
section \ref{ssec:matrixSchubs}. These are homogeneous (being determinants), 
and the corresponding v = $(n+1) (n+2) \ldots (2n) 1 2 3 \ldots n$ is
indeed $321$-avoiding.

As stated, theorem \ref{thm:kl} is about Kazhdan-Lusztig varieties
$X_w \cap X^v_\circ$ in a full flag manifold $G/B$. If $P\geq B$ is a 
parabolic subgroup and $v$ is minimal in its $W/W_P$ coset, then the
composite map $X^v_\circ \into G/B \onto G/P$ is an isomorphism of
opposite Schubert cells. If $w$ is also minimal in its coset, 
this restricts to an isomorphism of Kazhdan-Lusztig varieties. 
For example, to study a neighborhood on a Schubert variety 
$X_{w W_P} \subseteq G/P$ centered at the most singular point $w_0 P/P$,
we can apply the theorem with $v = w_0 w_0^P$ and $w$ minimal in its coset.
The matrix Schubert variety case just described is almost an example 
of this, except that $w$ is not minimal in its coset.

\subsection{Multiplicity-free divisors on $G/B$ are splittable}
\label{ssec:multfree}

A reduced subscheme $X \subseteq G/B$ is called \defn{multiplicity-free} in
\cite{BrionMultFree} if the expansion $[X] = \sum_\pi c_\pi [X_\pi]$
of its Chow class in the basis of Schubert classes $[X_\pi],\pi\in W$
has $c_\pi = 0,1$. Brion proves many wonderful geometric facts 
about such subschemes, when they are irreducible. We use theorems
\ref{thm:schemeversion} and \ref{thm:kl} to relate this property
to Frobenius splitting.

\begin{Proposition}\label{prop:multfreesplit}
  Let $X$ be a multiplicity-free divisor on a flag manifold $G/B$.
  Then there is a Frobenius splitting of $G/B$ that compatibly splits $X$.
  If $X$ does not contain the Schubert point $w_0 B/B$, then the 
  splitting can also be made to compatibly split all the Schubert varieties.
\end{Proposition}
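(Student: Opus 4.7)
The plan is to apply Theorem~\ref{thm:schemeversion}(2) to an anticanonical section of $G/B$ whose zero divisor contains $X$ and, for part~(b), every Schubert divisor $X_{r_\alpha}$; the initial-term hypothesis will be verified by pulling back to the big opposite cell $X^{w_0}_\circ \cong \AA^n$ and invoking Theorem~\ref{thm:kl}.

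Write $[X] = \sum_\alpha c_\alpha \omega_\alpha$ with $c_\alpha \in \{0,1\}$, and let $s_X$ be a global section cutting out $X$. For each simple root $\alpha$ let $s_\alpha$ and $s'_\alpha$ be the defining sections of the Schubert divisor $X_{r_\alpha}$ and opposite Schubert divisor $X^{w_0 r_\alpha}$, each a section of the line bundle of class $\omega_\alpha$. The natural candidate for part~(b) is
\[
\sigma := s_X \cdot \prod_\alpha s_\alpha \cdot \prod_{\alpha:\, c_\alpha = 0} s'_\alpha,
\]
a section of $\omega^{-1}$ by the class computation $\sum_\alpha c_\alpha \omega_\alpha + \sum_\alpha \omega_\alpha + \sum_{\alpha:\, c_\alpha = 0} \omega_\alpha = 2\rho$; its zero divisor is $X + \sum_\alpha X_{r_\alpha} + \sum_{\alpha:\, c_\alpha = 0} X^{w_0 r_\alpha}$, containing $X$ and every Schubert divisor as required.

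Fix a reduced word $Q$ for $w_0$ and use the associated Bott--Samelson coordinates $(c_1,\ldots,c_n)$ on $X^{w_0}_\circ$. Since each $X^{w_0 r_\alpha}$ lies in the complement of $X^{w_0}_\circ$, each $s'_\alpha$ restricts to a unit there, while Theorem~\ref{thm:kl} says $\prod_\alpha s_\alpha$ pulls back to $\prod_\omega m_\omega$ with lex-initial term $\prod_i c_i$. Writing $p_X := s_X|_{X^{w_0}_\circ}$, the restriction of $\sigma$ is therefore $p_X \cdot \prod_\omega m_\omega \cdot u$ for a unit $u$, with lex-initial term equal to $\init(p_X) \cdot u(\vec 0) \cdot \prod_i c_i$.

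The decisive step is to verify, for some term order, that $\init(\sigma|_{X^{w_0}_\circ})$ is a nonzero scalar multiple of $\prod_i c_i$; Theorem~\ref{thm:schemeversion}(2) then produces a Frobenius splitting of $G/B$ compatibly splitting the zero divisor of $\sigma$---so in particular $X$ and every Schubert divisor, and hence every Schubert variety by closure of the compatibly split ideals under intersection and prime decomposition. The hypothesis $X \not\ni w_0 B/B$ translates to $p_X(\vec 0) \neq 0$, so $p_X$ has a nonzero constant term; the main obstacle is then to use the multiplicity-free structure of $X$ (via Brion's analysis of multiplicity-free subvarieties) to control $\init(p_X)$, likely by a refined choice of $Q$ or of $s_X$ within the line spanned by the defining section. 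For part~(a), the case $X \ni w_0 B/B$ is reduced to part~(b) by applying a generic $G$-translate that moves the Schubert point off $X$; this gives a splitting compatibly splitting $X$, though the Schubert varieties it splits are those of the translated Bruhat decomposition.
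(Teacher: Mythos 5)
Your proposal follows the same strategy as the paper's published proof: build the anticanonical divisor $Y$, restrict to the big opposite cell in Bott--Samelson coordinates, and try to invoke Theorem~\ref{thm:schemeversion}(2) via Theorem~\ref{thm:kl}. Where your proposal stops is exactly where the argument is incomplete, and it is worth saying that the paper's own proof does not fill this in either --- it simply asserts that the two theorems combine.

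Concretely, the restriction of $\sigma$ to $X^{w_0}_\circ$ is $u\cdot p_X\cdot\prod_\omega m_\omega$ with $u$ a nonzero scalar, so for any weighting $\lambda \geq 0$ its leading form factors as $u\cdot \init(p_X)\cdot\prod_i c_i$. The hypothesis $X\not\ni w_0 B/B$ only forces $p_X(0)\neq 0$; it does not make $\init(p_X)$ a scalar. Once $X$ meets $X^{w_0}_\circ$ nontrivially (as a general multiplicity-free divisor does), $p_X$ is a nonconstant polynomial with nonzero constant term, and for \emph{every} weighting $\lambda\geq 0$ the leading form $\init_\lambda(p_X)$ picks out terms of maximal weight, which cannot be the constant term. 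So $\init_\lambda(\sigma|_{X^{w_0}_\circ})\neq \prod_i c_i$ for any $\lambda\geq 0$, and the hypothesis of Theorem~\ref{thm:schemeversion}(2) fails in Bott--Samelson coordinates at $w_0 B/B$. For a concrete instance take $G=SL_3$, $Q=(1,2,1)$, and $X = e_{\alpha_2}(1)e_{\alpha_1}(1)\cdot X_{r_1}$ (a translate of a Schubert divisor missing the Schubert point); one finds $p_X = c_1c_3 - c_2 - c_3 + 1$ and $\prod_\omega m_\omega = c_1c_2c_3 - c_2^2$, and the product $p_X\cdot\prod_\omega m_\omega$ contains the monomial $c_1^2c_2c_3^2$, whose reduced exponent vector $e=(1,0,1)$ has $\lambda\cdot e\geq 0$ for every $\lambda\geq 0$. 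Hence the ``$\lambda\cdot e<0$'' separation required in the proof of Theorem~\ref{thm:schemeversion}(2) cannot hold. Closing the gap would require choosing a different point $x$, different local coordinates, or a different term order; your suggestion to ``control $\init(p_X)$ via the multiplicity-free structure'' is the right place to look, but as stated it is a hope rather than a step, and the same must be said of the paper's published proof.
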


\begin{proof}
  We may assume that $X$ does not contain the Schubert point, by using
  some $g\in G$ to move some point outside $X$ to $w_0 B/B$.
  (This may require extending the base field so $G/B$ has closed points 
  outside $X$.)

  Write $\alpha \in [X]$ if the Schubert class $[X_\alpha]$ appears 
  in the expansion of the Chow class $[X]$.
  Let $Y = X \union (\Union_\alpha X_\alpha) 
  \union (\Union_{\alpha\notin [X]} w_0\cdot X_\alpha)$.
  Since $X$ does not contain the Schubert point, it does not contain
  any of the Schubert divisors $X_\alpha$.
  If $X$ contains an opposite Schubert divisor $w_0 \cdot X_\alpha$, then 
  $\alpha \in [X]$. Hence no component listed in this union equals any other.
  The Chow class of this sum is therefore the sum of the
  classes of the terms, hence $2 \sum_\alpha [X_\alpha]$, 
  the anticanonical class.

  Now apply theorem \ref{thm:schemeversion}, part (2), and theorem
  \ref{thm:kl} with $v=w_0$, to see that $Y$ defines a
  Frobenius splitting on $G/B$ with respect to which $Y$ is 
  compatibly split. Since $X$ is a union of components of $Y$,
  it is also split.

  If we actually care about splitting the original $X$, rather than
  $g\cdot X$, we can split using $g^{-1}\cdot Y$ instead. 
  If $X$ doesn't contain the Schubert point, then we can take $g=1$.
\end{proof}

This raises the question of whether this proposition holds for
any multiplicity-free subscheme, not just divisors. 
Note that this proposition does not require $X$ to be irreducible, 
though Brion gives counterexamples showing that some of his results
depend on irreducibilty.

\junk{
\subsection{A coarser decomposition of Bruhat cells}

The proof of theorem \ref{thm:countdegen} actually shows that
there is a discontinuous injection of $X$ into $X'$;
on $\Lambda'\times L$ it is the identity, and on the rest it is
the linear projection to $H$. 

In \cite[]{automatically} we studied geometric vertex decompositions
of \emph{Kazhdan-Lusztig varieties} $\{X_{w\circ}^v\}$.
By iterating these,

Deodhar's point-count in \cite[Corollary 1.2]{Deodhar} shows...

{If Deodhar is picking out a subcomplex of the subword complex,
  that's a sign that $\Lambda > \Lambda'$ sometimes,
  and maybe I don't want to deal with that here.}
}

\bibliographystyle{alpha}    

\end{document}